\def\blfootnote{\xdef\@thefnmark{}\@footnotetext}
\newcommand{\mc}[1]{\mathcal{{#1}}}
\newtheorem{thm}{Theorem}[section]
\newtheorem{cor}[thm]{Corollary}
\newtheorem{lem}[thm]{Lemma}
\newtheorem{prop}[thm]{Proposition}
\newtheorem{prob}[thm]{Question}
\theoremstyle{definition}
\newtheorem{defn}[thm]{Definition}
\theoremstyle{remark}
\newtheorem{rem}[thm]{Remark}
\newtheorem{ex}[thm]{Example}
\newcommand{\G }{\Gamma (G, X\sqcup \mathcal H)}
\newcommand{\Gy}{\Gamma (G, Y\sqcup \mathcal H)}
\newcommand{\Gz}{\Gamma (G, Z\sqcup \mathcal H)}
\newcommand{\dxh }{{\rm d}_{X\cup\mathcal H}}
\newcommand{\dyh }{{\rm d}_{Y\cup\mathcal H}}
\newcommand{\dl }{\widehat{\rm d}_{\lambda}}
\newcommand{\e }{\varepsilon }
\renewcommand{\kappa }{\varkappa}
\newcommand{\Hl }{\{ H_\lambda \} _{\lambda \in \Lambda } }
\renewcommand{\d }{{\rm d} }
\newcommand{\he }{hyperbolically embedded }
\newcommand{\Lab }{{\bf Lab}}
\newcommand{\h}{\hookrightarrow _{h}}
\newcommand{\QZ}{{QZ^1}}
\begin{document}

\title{Acylindrically hyperbolic groups}
\author{D. Osin\thanks{This work was supported by the NSF grant DMS-1006345 and by the RFBR grant 11-01-00945.}}
\date{}

\maketitle

\begin{abstract}
We say that a group $G$ is \emph{acylindrically hyperbolic } if it admits a non-elementary acylindrical action on a hyperbolic space. We prove that the class of acylindrically hyperbolic groups coincides with many other classes studied in the literature, e.g., the class $C_{geom}$ introduced by Hamenst\"adt, the class of groups admitting a non-elementary weakly properly discontinuous action on a hyperbolic space in the sense of Bestvina and Fujiwara, and the class of groups with hyperbolically embedded subgroups studied by Dahmani, Guirardel, and the author. We also record some basic results about acylindrically hyperbolic groups for future use.
\end{abstract}

\vspace{-2mm} \hspace{3mm} \textbf{MSC Subject Classification:} 20F67, 20F65.

\tableofcontents

%%%%%%%%%%%%%%%%%%%%%%%%%%%%%%%%%%%%%%%%%%%%%%%%%%%%%%%%%%%%%%%%%%%%%

\section{Introduction}

%%%%%%%%%%%%%%%%%%%%%%%%%%%%%%%%%%%%%%%%%%%%%%%%%%%%%%%%%%%%%%%%%%%%%

The action of a group $G$ on a metric space $S$ is called {\it acylindrical} if for every $\e>0$ there exist $R,N>0$ such that for every two points $x,y$ with $\d (x,y)\ge R$, there are at most $N$ elements $g\in G$ satisfying
$$
\d(x,gx)\le \e \;\;\; {\rm and}\;\;\; \d(y,gy) \le \e.
$$
(By default, all actions are assumed to be isometric in this paper.) Informally, one can think of this condition as a kind of properness of the action on $S\times S$ minus a ``thick diagonal". The notion of acylindricity goes back to Sela's paper \cite{Sel}, where it was considered for groups acting on trees. In the context of general metric spaces, the above definition is due to Bowditch \cite{Bow}.

In the recent years, many interesting results were obtained for groups that admit a non-elementary action on a hyperbolic space which is acylindrical or satisfies certain similar assumptions such as weak acylindricity introduced by Hamenst\"adt \cite{Ham}, weak proper discontinuity introduced by Bestvina and Fujiwara \cite{BF}, or existence of weakly contracting elements in the sense of Sisto \cite{Sis}. Groups acting acylindrically and non-elementary on hyperbolic spaces also serve as the main source of examples in the paper \cite{DGO}, where some parts of the theory of relatively hyperbolic groups were generalized in a more general context of groups with hyperbolically embedded subgroups.

Our main goal is to show that the classes of groups considered in the above-mentioned papers are essentially the same and coincide with the class of acylindrically hyperbolic groups defined below. This paper is also aimed to serve as a reference for basic properties of acylindrically hyperbolic groups. Finally we include a brief survey, which brings together some known examples and results about acylindrically hyperbolic groups proved in \cite{BBF,BF,DGO,Ham,HO,Sis} and other papers under various assumptions equivalent to acylindrical hyperbolicity.

Let us recall the standard terminology. Given a group $G$ acting on a hyperbolic space $S$, an element $g\in G$ is called \emph{elliptic} if some (equivalently, any) orbit of $g$ is bounded, and \emph{loxodromic} if the map $\mathbb Z\to S$ defined by $n\mapsto g^ns$ is a quasi-isometry for some (equivalently, any) $s\in S$. Every loxodromic element $g\in G$ has exactly $2$ limit points $g^{\pm\infty}$ on the Gromov boundary $\partial S$. Loxodromic elements $g,h\in G$ are called \emph{independent} if the sets $\{ g^{\pm \infty}\}$ and $\{ h^{\pm \infty}\}$ are disjoint.

We begin with a classification of groups acting acylindrically on hyperbolic spaces.

\begin{thm}\label{class}
Let $G$ be a group acting acylindrically on a hyperbolic space. Then $G$ satisfies exactly one of the following three conditions.
\begin{enumerate}
\item[(a)] $G$ has bounded orbits.
\item[(b)] $G$ is virtually cyclic and contains a loxodromic element.
\item[(c)] $G$ contains infinitely many independent loxodromic elements.
\end{enumerate}
\end{thm}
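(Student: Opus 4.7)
The plan is to apply Gromov's classification of isometric actions on hyperbolic spaces and then use acylindricity to rule out the two ``bad'' types. Recall that every isometric action of a group $G$ on a hyperbolic space $S$ falls into exactly one of five classes: elliptic (bounded orbits), parabolic (no loxodromic element and a unique fixed point on $\partial S$), lineal (two boundary points fixed setwise and a loxodromic present), focal or quasi-parabolic (one boundary point fixed, loxodromic elements present, not lineal), and general type (contains two independent loxodromics). I will match elliptic with (a), lineal with (b), general type with (c), and show that acylindricity forbids the parabolic and focal types.

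The main technical input is a Bowditch-type dichotomy: in an acylindrical action every individual element is either elliptic or loxodromic. If $g$ has unbounded orbit but is not loxodromic, its stable translation length vanishes, so for a basepoint $s$ and any $\varepsilon>0$ one finds arbitrarily long runs $g^i s, g^{i+1}s, \dots$ of successive displacement less than $\varepsilon$; this produces arbitrarily many powers $g^j$ moving each of two far-apart points on the run by at most $\varepsilon$, violating the acylindricity bound. Using this dichotomy the parabolic type is excluded: a parabolic action has no loxodromic element, so every element is elliptic, yet unbounded $G$-orbits escaping toward a boundary fixed point again supply arbitrarily many small-displacement elements on a long near-geodesic segment.

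For the focal case, suppose $G$ fixes a unique boundary point $\xi$ and contains a loxodromic $g$ with fixed points $\{\xi, \eta\}$. Since $G$ has only $\xi$ as a global boundary fixed point, some $h\in G$ satisfies $h\eta\neq\eta$, and $hgh^{-1}$ is loxodromic with fixed points $\{\xi, h\eta\}$. A standard ping-pong argument using North-South dynamics at $\eta$ and $h\eta$ produces free subgroups of loxodromic elements all sharing the fixed point $\xi$, which in turn forces arbitrarily many group elements with small displacement along a sequence of points approaching $\xi$, contradicting acylindricity. For the lineal case, acylindricity applied to two far-apart points on the quasi-axis of a loxodromic $g$ shows that the pointwise stabilizer of $\{g^{+\infty}, g^{-\infty}\}$ is a finite extension of $\langle g\rangle$, and the setwise stabilizer adds at most a factor of two, giving (b). Finally, in the general-type case, if $g, h$ are independent loxodromics, the conjugates $h^n g h^{-n}$ are loxodromic with fixed-point pairs $h^n\{g^{\pm\infty}\}$ accumulating at $\{h^{\pm\infty}\}$; an infinite subsequence then has pairwise disjoint fixed-point pairs, giving (c).

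The main obstacle I anticipate is the focal case: no pair of points in $S$ is fixed, so one must work along a sequence of points tending to the parabolic fixed point, and the required quantitative statement that ping-pong dynamics near the boundary yields an acylindricity violation requires care with Gromov products and the hyperbolicity constant. The other nontrivial point is the first step of the Bowditch dichotomy itself, which underpins everything and must be proved directly from the definition of acylindricity without relying on properness.
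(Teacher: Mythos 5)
Your high-level plan (Gromov's classification plus acylindricity to kill the parabolic and focal types) matches the paper's strategy in outline, but the two steps that carry all the weight are not actually established, and one of them is sketched incorrectly. First, your proof of the elliptic/loxodromic dichotomy does not work as written: for the orbit $s, gs, g^2s, \ldots$ the successive displacement $\d(g^is, g^{i+1}s)=\d(s,gs)$ is \emph{constant}, so vanishing stable translation length does not give you "arbitrarily long runs of successive displacement less than $\varepsilon$". Moreover, this dichotomy is not an independent input one can quote and then build on here -- it is precisely the statement of the theorem for cyclic subgroups (the paper points this out explicitly), so proving it requires the same work as excluding parabolic actions in general. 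Second, and more seriously, your exclusion of the parabolic type is asserted rather than proved: "unbounded orbits escaping toward a boundary fixed point supply arbitrarily many small-displacement elements on a long near-geodesic segment" is exactly the claim that needs a mechanism, and none is given. An element of a parabolic group can move a given point a large distance even though it is elliptic, so there is no a priori reason that two far-apart points each admit $N+1$ elements displacing both by at most $\varepsilon$.

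For comparison, the paper's argument supplies this missing mechanism in two pieces. If the action is uniformly proper, it runs the Ivanov--Olshanskii argument (Lemma \ref{IO}): assuming no element and no product of two suitably positioned elements is loxodromic forces, via Gromov-product estimates, a strict increase in the size of a maximal quasi-stabilizer $A(x)$, a contradiction. Otherwise quasi-stabilizers $A(x)=\{g : \d(x,gx)\le C\}$ can be made arbitrarily large; a strengthened acylindricity statement (Lemma \ref{strongacyl}) bounds the number of $a\in A(x)$ that also nearly fix a far-away point $y=tx$, so a counting argument produces $a\in A(x)$ such that $a$ and $tat^{-1}$ have "disjoint quasi-fixed sets", and Lemma \ref{prel} (a quasified version of the fact that the product of two elliptic tree isometries with disjoint fixed sets is hyperbolic) shows $a\cdot tat^{-1}$ is loxodromic. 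Your proposal contains no analogue of this product-of-two-elliptics construction, which is the heart of the proof. Once a loxodromic element exists, the paper disposes of the focal and lineal cases in one stroke by citing Bestvina--Fujiwara (Lemma \ref{wpd-el}), rather than running a separate boundary ping-pong at the parabolic fixed point as you propose; your focal argument could likely be made to work, but it is redundant given that citation and, as you note yourself, would need careful quantitative control near the boundary.
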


Applying the theorem to cyclic groups, we recover the following result of Bowditch \cite{Bow}:  every element of a group acting acylindrically on a hyperbolic space is either elliptic or loxodromic. Compared to the general classification of groups acting on hyperbolic spaces, Theorem \ref{class} rules out parabolic and quasi-parabolic actions in Gromov's terminology \cite{Gro}. The non-trivial part is to show that $G$ cannot act parabolically. Then the theorem follows from the known fact that if $G$ is not virtually cyclic and contains a loxodromic element, then it is of type (c). This was proved by Bestwina--Fujiwara \cite{BF} for weakly properly discontinuous actions and Hamenst\"adt \cite{Ham} for weakly acylindrical actions. Note that acylindricity is essential here; it is easy to see that Theorem \ref{class} can fail in various ways even for (non-acylindrical) proper actions. For more details we refer to Section \ref{Secclass}.

We now state the main result of this paper. Recall that an action of a group $G$ on a hyperbolic space $S$ is called \emph{elementary} if the limit set of $G$ on $\partial S$ contains at most $2$ points. If the action is acylindrical,  non-elementarity is equivalent to condition (c) from Theorem \ref{class}.

\begin{thm}\label{main}
For any group $G$, the following conditions are equivalent.
\begin{enumerate}
\item[(AH$_1$)] There exists a generating set $X$ of $G$ such that the corresponding Cayley graph $\Gamma (G,X)$ is hyperbolic,  $|\partial \Gamma (G,X)|> 2$, and the natural action of $G$ on $\Gamma (G,X)$ is acylindrical.
\item[(AH$_2$)] $G$ admits a non-elementary acylindrical action on a hyperbolic space.
\item[(AH$_3$)] $G$ is not virtually cyclic and admits an action on a hyperbolic space such that at least one element of $G$ is loxodromic and satisfies the WPD condition.
\item[(AH$_4$)] $G$ contains a proper infinite hyperbolically embedded subgroup.
\end{enumerate}
\end{thm}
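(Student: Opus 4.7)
The plan is to prove the cyclic chain of implications
\[
(\mathrm{AH}_1) \Rightarrow (\mathrm{AH}_2) \Rightarrow (\mathrm{AH}_3) \Rightarrow (\mathrm{AH}_4) \Rightarrow (\mathrm{AH}_1).
\]

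The first three steps are essentially routine given Theorem~\ref{class} and standard results of DGO. For $(\mathrm{AH}_1) \Rightarrow (\mathrm{AH}_2)$, take $S = \Gamma(G,X)$ itself: the action is acylindrical by hypothesis, and since $G$ acts vertex-transitively the limit set equals the full boundary, so $|\partial \Gamma(G,X)| > 2$ is precisely non-elementarity. For $(\mathrm{AH}_2) \Rightarrow (\mathrm{AH}_3)$, acylindricity trivially implies WPD for every element, while Theorem~\ref{class} forces a non-elementary acylindrical action to fall into case (c), simultaneously producing a loxodromic element and ruling out virtual cyclicity of $G$. For $(\mathrm{AH}_3) \Rightarrow (\mathrm{AH}_4)$, let $g$ be a loxodromic WPD element acting on a hyperbolic space $S$ and let $E(g)$ be the set-wise stabilizer of $\{g^{+\infty}, g^{-\infty}\} \subseteq \partial S$; a standard WPD argument shows that $E(g)$ is virtually cyclic with $\langle g\rangle$ of finite index, and a theorem of DGO then yields $E(g) \h G$. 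Infiniteness of $E(g)$ is clear, and properness follows since $G$ is not virtually cyclic while $E(g)$ is.

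The main obstacle is $(\mathrm{AH}_4) \Rightarrow (\mathrm{AH}_1)$. Given a proper infinite $H \h G$ with finite relative generating set $X$, the Cayley graph $\G$ is hyperbolic, but its $G$-action is not generally acylindrical: every element of $H$ fixes the vertex $1$ at distance $1$, while $H$-orbits in $\G$ can be unbounded. So $\G$ cannot be used directly. The plan is first to extract a loxodromic WPD element $g \in G$ for the action on $\G$ (such an element exists by the DGO theory of hyperbolically embedded subgroups, via a ping-pong argument exploiting the local finiteness of the relative metric on $H$); the previous step then produces $E(g)\h G$ that is virtually cyclic. Choose a finite generating set $F$ of $E(g)$ and let $Y = X \cup F$, a genuine (group-theoretic) generating set of $G$. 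The claim is that $\Gamma(G, Y)$ is hyperbolic, has $|\partial \Gamma(G, Y)| > 2$, and supports an acylindrical $G$-action. Hyperbolicity and non-triviality of the boundary follow by comparing $\Gamma(G, Y)$ with the coned-off graph $\Gamma(G, X \sqcup E(g))$: the latter is hyperbolic by the hyperbolic embedding of $E(g)$, and passing to $\Gamma(G, Y)$ amounts to attaching a Groves--Manning-type horoball along each $E(g)$-coset; virtual cyclicity of $E(g)$ makes these horoballs themselves hyperbolic and preserves global hyperbolicity, while $g$ remains loxodromic.

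The hardest step is verifying acylindricity of the $G$-action on $\Gamma(G, Y)$. The WPD property of $g$ only controls stabilizers along a single quasi-axis, whereas acylindricity demands a uniform bound on the number of elements simultaneously translating two sufficiently separated points by a small amount. The bridge is to project geodesics in $\Gamma(G, Y)$ onto $\Gamma(G, X \sqcup E(g))$, use the bounded-coset-intersection property characteristic of hyperbolically embedded subgroups to control how much of any geodesic can lie near a given $E(g)$-coset, and then combine WPD of $g$ with virtual cyclicity of $E(g)$ to convert these geometric bounds into the required finiteness estimates. This promotion from WPD on a general hyperbolic space to genuine acylindricity on an explicit Cayley graph is where the real technical weight of the proof lies.
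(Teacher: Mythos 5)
Your first three implications are fine and agree with the paper: (AH$_1$) $\Rightarrow$ (AH$_2$) and (AH$_2$) $\Rightarrow$ (AH$_3$) are exactly the routine arguments the paper uses (coboundedness identifies the limit set with the boundary; Theorem \ref{class} rules out everything but case (c)), and (AH$_3$) $\Rightarrow$ (AH$_4$) is the DGO construction of $E(g)\h G$, which the paper simply cites. You also correctly diagnose that the whole difficulty is (AH$_4$) $\Rightarrow$ (AH$_1$) and that $\G$ itself cannot be used because its $G$-action need not be acylindrical. (Two small slips along the way: the relative generating set $X$ in the definition of $H\h (G,X)$ need not be finite --- assuming it is would put you in the relatively hyperbolic case, where Proposition \ref{rhg} already suffices --- and passing from $\Gamma(G,X\sqcup E(g))$ to $\Gamma(G,X\cup F)$ is un-coning quasi-lines, not attaching horoballs; the hyperbolicity claim is still true via transitivity of $\h$, as in the paper's Lemma \ref{435}.)

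The genuine gap is your proposed fix. Setting $Y=X\cup F$ with $F$ a finite generating set of the virtually cyclic $E(g)$ does not produce an acylindrical Cayley graph, and no amount of projecting onto $\Gamma(G,X\sqcup E(g))$ and invoking WPD plus bounded coset intersection will repair this, because the obstruction to acylindricity has nothing to do with the cosets of $E(g)$. The paper's own counterexample defeats your construction: take $G=(K\times\mathbb Z)\ast H$ with $K$ infinite, $X=K\cup\{x\}$, and any loxodromic WPD element $g$ (say $g=xh_0$ with $h_0\in H\setminus\{1\}$, so $E(g)$ meets $K\times\mathbb Z$ trivially). In $\Gamma(G,X\cup H\cup F)$ the path $1,x,x^2,\dots,x^n$ is still a geodesic of length $n$, yet every element of the infinite group $K$ moves each of its vertices by distance $1$; the action is not acylindrical for any choice of $F\subseteq E(g)$. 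What is missing is the paper's Theorem \ref{XtoY}: one must enlarge the \emph{relative} generating set to $Y=\{y\in G\mid S(1,y;D)=\emptyset\}$, the set of all elements admitting no $D$-separating coset (in the example above this forces $K\times\mathbb Z\subseteq Y$, killing the bad geodesic). Proving that $\Gamma(G,Y\sqcup\mathcal H)$ is hyperbolic, still has $\Hl$ hyperbolically embedded, and is acylindrical occupies Sections 4--5 of the paper and rests on the combinatorics of separating cosets (Lemmas \ref{HO}, \ref{inY}, \ref{dS}), not on the WPD property of a single element. Note also that the paper then verifies (AH$_1$) with the generating set $X\cup H$ itself (non-elementarity coming from Lemma \ref{ah}); your extra reduction to a Cayley graph over $X\cup F$ with $F$ finite is the content of Theorem \ref{lox}, which again takes Theorem \ref{XtoY} as its starting point rather than replacing it.
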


Here WPD is the abbreviation of the Bestvina--Fujiwara weak proper discontinuity condition; for a loxodromic element $g$ it requires the action of $G$ to be acylindrical in the direction of a (quasi) axis of $g$. For the precise definition of this condition as well as for the definition of hyperbolically embedded subgroups we refer to the next section. Note that we do not assume that the actions in (AH$_2$) and (AH$_3$) are cobounded, while the action in (AH$_1$) obviously is. We are not aware of any elementary construction which allows to get cobounded actions on hyperbolic spaces from non-cobounded ones and preserves non-elementarity and acylindricity.

It immediately follows from definitions that (AH$_1$) $\Longrightarrow$ (AH$_2$) $\Longrightarrow$ (AH$_3$). The implication (AH$_3$) $\Longrightarrow$ (AH$_4$) is non-trivial and was proved in \cite{DGO}. Thus we only need to prove that (AH$_4$) $\Longrightarrow$ (AH$_1$). In fact, we prove a stronger statement, Theorem \ref{XtoY}, which seems to be of independent interest. The proof makes use of some technical tools from the paper \cite{HO}.

\begin{defn}
We call a group $G$ \emph{acylindrically hyperbolic} if it satisfies either of the equivalent conditions (AH$_1$)--(AH$_4$) from Theorem \ref{main}.
\end{defn}

Note that every group has an obvious acylindrical action on a hyperbolic space, namely the trivial action on a point. Thus considering elementary acylindrically hyperbolic groups does not make much sense. For this reason we include non-elementarity in the definition.

Many other conditions studied in the literature are obviously intermediate between some of the conditions from Theorem \ref{main} and hence are equivalent to acylindrical hyperbolicity. One example is the weak acylindricity in the sense of Hamenst\"adt \cite{Ham}, which is weaker than acylindricity, but stronger than weak proper discontinuity. This implies that the class $C_{geom}$ introduced in \cite{Ham} coincides with the class of acylindrically hyperbolic groups.

The class of acylindrically hyperbolic groups includes many examples of interest: non-elementary hyperbolic and relatively hyperbolic groups, all but finitely many mapping class groups of punctured closed surfaces, $Out(F_n)$ for $n\ge 2$, directly indecomposable right angled Artin groups, $1$-relator groups with at least $3$ generators, most $3$-manifold groups, and many other examples. On the other hand, acylindrical hyperbolicity is strong enough to imply non-trivial theorems. For a brief survey  of known examples and results about acylindrically hyperbolic groups we refer to Section 8.

The next theorem allows us to define a natural notion of a \emph{generalized loxodromic element} of a group in Section 4. Similarly to Theorem \ref{main}, our contribution amounts to proving that (L$_4$) implies (L$_1$). Indeed the implications (L$_1$) $\Longrightarrow$ (L$_2$),  (L$_2$) $\Longrightarrow$ (L$_3$) are immediate from the definitions, and (L$_3$) $\Longrightarrow$ (L$_4$) is proved in \cite{DGO}.

\begin{thm}\label{lox}
For any group $G$ and any $g\in G$, the following conditions are equivalent.
\begin{enumerate}
\item[(L$_1$)] There exists a generating set $X$ of $G$ such that the corresponding Cayley graph $\Gamma (G,X)$ is hyperbolic, the natural action of $G$ on $\Gamma (G,X)$ is acylindrical, and $g$ is loxodromic.
\item[(L$_2$)] There exists an acylindrical action of $G$ on a hyperbolic space such that $g$ is loxodromic.
\item[(L$_3$)] There exists an action of $G$ on a hyperbolic space such that $g$ acts loxodromically and satisfies the WPD condition.
\item[(L$_4$)] The order of $g$ is infinite and $g$ is contained in a virtually cyclic hyperbolically embedded subgroup of $G$.
\end{enumerate}
\end{thm}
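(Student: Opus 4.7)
Only (L$_4$) $\Rightarrow$ (L$_1$) needs proof: the implications (L$_1$) $\Rightarrow$ (L$_2$) $\Rightarrow$ (L$_3$) are immediate from the definitions, and (L$_3$) $\Rightarrow$ (L$_4$) is in \cite{DGO}. So assume $g$ has infinite order and lies in a virtually cyclic subgroup $H\h G$.

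If $G$ itself is virtually cyclic, take any finite generating set $X$ of $G$. Then $\Gamma(G,X)$ is quasi-isometric to $\mathbb Z$ and hence hyperbolic; the action of $G$ is proper and cocompact, and so in particular acylindrical; and $\langle g\rangle$ has finite index in $G$, so $g$ is loxodromic. This verifies (L$_1$) directly.

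From now on assume $G$ is not virtually cyclic; then $H\ne G$, so $H$ is a proper infinite \he subgroup of $G$ and (AH$_4$) holds. The plan is to apply Theorem \ref{XtoY} --- the refined form of (AH$_4$) $\Rightarrow$ (AH$_1$) announced above --- with $H$ as input. This should produce a generating set $X$ of $G$ for which $\Gamma(G,X)$ is hyperbolic, the $G$-action on $\Gamma(G,X)$ is acylindrical, and, by the way $X$ is built (absorbing a finite generating set of the virtually cyclic $H$ into $X$), the inclusion $H\hookrightarrow G$ induces a quasi-isometric embedding of $H$ into $\Gamma(G,X)$. Since $\langle g\rangle$ has finite index in the virtually cyclic group $H$, the map $n\mapsto g^n$ is a quasi-isometric embedding $\mathbb Z\to H$; composing with $H\to \Gamma(G,X)$ gives a quasi-isometric embedding $\mathbb Z\to \Gamma(G,X)$, so $g$ is loxodromic. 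This supplies the last ingredient of (L$_1$).

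The entire difficulty is therefore absorbed into Theorem \ref{XtoY}, and I expect that to be the main obstacle. The natural relative Cayley graph $\Gamma(G,X\sqcup H)$ associated with the \he structure is already hyperbolic and contains $H$ quasi-isometrically, but the $G$-action on it is essentially never acylindrical: single elements of $H$ stabilize arbitrarily long segments of $H$-cosets. To repair this one must replace the infinite ``generating set'' $H$ by a finite generating set of $H$, absorb it into $X$, and then show that hyperbolicity of the resulting Cayley graph and acylindricity of the $G$-action both survive; this uses the projection and separation machinery of \cite{DGO} together with the technical tools from \cite{HO}. Non-distortion of $H$ in the output Cayley graph is preserved essentially for free from the construction, and it is exactly what forces $g$ to remain loxodromic. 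Once Theorem \ref{XtoY} is established in this strengthened form, the deduction of Theorem \ref{lox} is the short composition-of-quasi-isometric-embeddings argument sketched above.
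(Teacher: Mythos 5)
Your reduction to (L$_4$) $\Rightarrow$ (L$_1$) and the separate treatment of virtually cyclic $G$ are fine, but the core of the argument has a genuine gap: you attribute to Theorem \ref{XtoY} much more than it delivers. Theorem \ref{XtoY} produces a relative generating set $Y$ with $E\h (G,Y)$ such that the action on the \emph{relative} Cayley graph $\Gamma(G,Y\sqcup E)$ is acylindrical. In that graph the subgroup $E$ has diameter $1$ (every element of $E$ labels an edge), so $E$ is maximally distorted and $g$ is elliptic there; your assertion that the relative Cayley graph ``contains $H$ quasi-isometrically'' is false for the same reason. The actual content of (L$_4$) $\Rightarrow$ (L$_1$) is the passage from the acylindrical relative Cayley graph to an honest Cayley graph $\Gamma(G,X)$, where $X$ is a genuine generating set obtained by absorbing a finite generating set of the virtually cyclic $E$. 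Hyperbolicity of $\Gamma(G,X)$ does come quickly from transitivity of hyperbolic embeddedness (Lemma \ref{435}: $\{1\}\h(E,Y_0)$ and $E\h(G,X)$ give $\{1\}\h(G,X\cup Y_0)$), but acylindricity of $\Gamma(G,X)$ is a separate, nontrivial argument: one must handle elements that are short in the $X\cup E$-metric yet long in the $X$-metric, which forces a geodesic from $1$ to $g$ in $\Gamma(G,X\sqcup E)$ to contain an $E$-component that is long in the relative metric $\dl$ (via Lemma \ref{411}), after which Lemma \ref{C} pins the quasi-stabilizing elements down to finitely many choices. None of this is contained in Theorem \ref{XtoY}, and none of it appears in your proposal beyond a one-sentence gesture.

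Second, your derivation of loxodromicity of $g$ rests on the unproved claim that non-distortion of $E$ in the output Cayley graph is ``preserved essentially for free from the construction.'' It is not free, and it is also more than is needed. Once acylindricity of $\Gamma(G,X)$ is in hand, Bowditch's dichotomy (Lemma \ref{bow}) reduces the problem to showing that $\{|g^n|_X\}_{n\in\mathbb Z}$ is unbounded; the paper does this by observing that a word in $X$ of length at most $A$ representing $g^n$ yields an admissible path in $\Gamma(G,X\sqcup E)$, hence $\dl(1,g^n)\le A$ for all $n$, contradicting local finiteness of $(E,\dl)$ because $g$ has infinite order. You should either supply an argument of this kind or actually prove the quasi-isometric embedding you invoke.
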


Theorems \ref{class}, \ref{main}, and \ref{lox} have several almost immediate corollaries. We mention three examples here. Other applications can be found in \cite{Hull,MO}

Recall that a subgroup $H\le G$ is \emph{$s$-normal} in $G$ if $|H^g\cap H|=\infty $ for every $g\in G$. The corollary below is useful for showing that certain groups are not acylindrically hyperbolic (see Section 7). It generalizes \cite[Lemma 8.11]{DGO}; in our language, the latter results claims that the class of acylindrically hyperbolic groups is closed under taking normal subgroups. It is worth noting that our proof essentially uses acylindrical actions and is much shorter than the proof of Lemma 8.11 in \cite{DGO}.

\begin{cor}\label{s-norm}
The class of acylindrically hyperbolic groups is closed under taking $s$-normal subgroups.
\end{cor}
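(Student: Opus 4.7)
The plan is to use characterization (AH$_1$) for $G$. Fix a generating set $X$ of $G$ such that $\Gamma = \Gamma(G,X)$ is hyperbolic, the natural action of $G$ on $\Gamma$ is acylindrical, and $|\partial\Gamma|>2$. The restricted action of $H$ on $\Gamma$ is automatically acylindrical and is on a hyperbolic space, so Theorem \ref{class} applies to it: either (a) $H$ has bounded orbits, (b) $H$ is virtually cyclic with a loxodromic element, or (c) $H$ contains infinitely many independent loxodromic elements. Case (c) is precisely what we want, since it provides $H$ with a non-elementary acylindrical action on a hyperbolic space, verifying condition (AH$_2$). It therefore suffices to rule out cases (a) and (b) using $s$-normality.

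To rule out (a), take the identity vertex of $\Gamma$ as basepoint and suppose $d(1,h)\le D$ for all $h\in H$. By Theorem \ref{class}(c) applied to $G$, fix a loxodromic element $t\in G$. Apply the acylindricity of the $G$-action with $\varepsilon=D$ to obtain constants $R,N$, and choose $n$ large enough that $d(1,t^n)\ge R$. For every $h\in H\cap t^nHt^{-n}$, write $h=t^nh't^{-n}$ with $h'\in H$; then $d(1,h)\le D$ and $d(t^n,h\cdot t^n)=d(1,h')\le D$, so acylindricity bounds the number of such $h$ by $N$. This contradicts the fact that $|H\cap t^nHt^{-n}|=\infty$, which is guaranteed by $s$-normality.

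To rule out (b), suppose $H$ is virtually cyclic and contains a loxodromic $h_0$, with endpoints $\{h_0^{+\infty},h_0^{-\infty}\}\subset\partial\Gamma$. A standard argument (passing to the finite-index infinite cyclic subgroup of $H$ and comparing powers with those of $h_0$) shows that every infinite order element of $H$ is loxodromic with the same pair of fixed points. Given any $g\in G$, the subgroup $H\cap gHg^{-1}$ is infinite by $s$-normality, and being inside the virtually cyclic group $H$ it contains some infinite order element $f$. Then $f$ has fixed points $\{h_0^{\pm\infty}\}$; but $f=gf'g^{-1}$ for some infinite order $f'\in H$, so the fixed points of $f$ equal $g\cdot\{h_0^{\pm\infty}\}$. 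Hence $g\cdot\{h_0^{+\infty},h_0^{-\infty}\}=\{h_0^{+\infty},h_0^{-\infty}\}$ for every $g\in G$. Since the $G$-action on $\Gamma$ is non-elementary, Theorem \ref{class}(c) gives a loxodromic $t\in G$ independent from $h_0$; then $t^2$ must fix both $h_0^{\pm\infty}$, and as a loxodromic element with exactly two boundary fixed points its axis endpoints coincide with $\{h_0^{\pm\infty}\}$, contradicting independence. The main obstacle in the whole argument is case (a): the trick is to recognize that combining $s$-normality with a loxodromic $t\in G$ lets us exhibit infinitely many group elements that each move two $R$-distant points by at most $D$, directly violating the acylindricity of the ambient $G$-action. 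Case (b) is then a routine analysis of the limit set.
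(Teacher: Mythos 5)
Your argument is correct, and it reaches the same overall reduction as the paper: restrict the acylindrical action to $H$, apply Theorem \ref{class}, and use $s$-normality to exclude the elliptic and the virtually cyclic cases. Your treatment of the elliptic case is essentially the paper's (the paper phrases it for an arbitrary non-elementary acylindrical action on a space $S$ and uses any element $g$ with $\d (x,gx)\ge R$ rather than a power of a loxodromic element, but the counting argument against acylindricity is identical). Where you genuinely diverge is the virtually cyclic case. The paper routes this through its main machinery: by Theorem \ref{lox}, the loxodromic $h\in H$ lies in a virtually cyclic subgroup $E\h G$, which is almost malnormal by Lemma \ref{maln}; $s$-normality of $\langle h\rangle$ then forces $G=E$, contradicting non-elementarity. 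You instead argue with boundary dynamics: every infinite-order element of $H$ shares the limit pair $\{h_0^{\pm\infty}\}$, $s$-normality makes this two-point set $G$-invariant, and two independent loxodromics in $G$ (guaranteed by Theorem \ref{class}(c)) cannot both stabilize a single two-point set. This is more elementary---it avoids Theorem \ref{lox} and hyperbolically embedded subgroups entirely---at the cost of invoking the standard fact that a loxodromic isometry of a (possibly non-proper) hyperbolic space fixes exactly its two limit points on $\partial S$ (North--South dynamics), a fact the paper never needs to state explicitly and which you should cite or prove if writing this up. Both arguments are complete; the paper's route also yields the sharper structural conclusion $G=E$ in that case, while yours stays entirely within the classification theorem and the definition of acylindricity.
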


For a subgroup $H$ of a group $G$, the \emph{commensurator} of $H$ is defined by $$Comm_G(H)=\{ g\in G \mid [H: H\cap H^g]<\infty \; {\rm and} \; [H^g: H\cap H^g]<\infty\}.$$ The study of commensurators is partially motivated by the result of Mackey \cite{Mac} stating that $H=Comm_G(H)$ if and only if the quasi-regular representation of $G$ on $\ell^2(G/H)$ is irreducible. For more details we refer the reader to \cite{BdH}.

As usual, we say that a group acting on a metric space is \emph{elliptic} if it has bounded orbits. From Theorem  \ref{class} and Corollary \ref{s-norm}, we obtain the following.

\begin{cor}\label{ell}
For every group $G$ acting acylindrically on a hyperbolic space, the following hold.
\begin{enumerate}
\item[(a)] Every elliptic subgroup of $G$ is contained in a maximal elliptic subgroup of $G$.
\item[(b)] For every infinite maximal elliptic subgroup $H\le G$, we have $H=Comm_G(H)$.
\end{enumerate}
\end{cor}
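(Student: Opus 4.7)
The plan is to use Zorn's lemma for part (a) and to combine the trichotomy of Theorem \ref{class} with acylindricity for part (b).

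For (a), I fix an elliptic subgroup $H_0\le G$ and consider the poset $\mc E$ of elliptic subgroups of $G$ containing $H_0$, ordered by inclusion. Any chain $\{H_i\}$ in $\mc E$ has upper bound $H=\bigcup_i H_i$, which is again a subgroup of $G$ and so inherits an acylindrical action on the ambient hyperbolic space. By Theorem \ref{class} applied to $H$, the only way $H$ could fail to be elliptic would be to contain a loxodromic element $\ell$. But then $\ell\in H_i$ for some $i$, and since a loxodromic element has unbounded cyclic orbits, this contradicts the ellipticity of $H_i$. Hence $H$ is elliptic, and Zorn's lemma yields a maximal elliptic subgroup containing $H_0$.

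For (b), let $H$ be an infinite maximal elliptic subgroup, let $g\in Comm_G(H)$, and set $K=\langle H,g\rangle$. Since $Comm_G(H)$ is a subgroup containing both $H$ and $g$, we have $K\subseteq Comm_G(H)$, so every $k\in K$ satisfies $[H:H\cap H^k]<\infty$; as $H$ is infinite, this makes $H$ an $s$-normal subgroup of $K$. I now apply Theorem \ref{class} to $K$. If $K$ has bounded orbits, then $K$ is elliptic, and maximality of $H$ forces $K=H$, so $g\in H$ as desired. If $K$ is virtually cyclic with a loxodromic element $\ell$, then $\langle\ell\rangle$ has finite index in $K$, hence $H\cap\langle\ell\rangle$ has finite index in $H$ and is therefore infinite; thus $H$ contains a nontrivial power of $\ell$, which is loxodromic, contradicting the ellipticity of $H$.

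The remaining case, where $K$ contains infinitely many independent loxodromic elements, is the main obstacle and the one where acylindricity is crucial. Pick any loxodromic $\ell\in K$ and a point $x$ in a bounded $H$-orbit of diameter $\e$, so $\d(x,hx)\le\e$ for every $h\in H$. For $h\in H\cap H^{\ell^n}$, writing $h=\ell^{-n}h'\ell^n$ with $h'\in H$ gives $\d(\ell^{-n}x,h\ell^{-n}x)=\d(x,h'x)\le\e$, so $h$ moves both $x$ and $\ell^{-n}x$ by at most $\e$. Since $\ell$ is loxodromic, $\d(x,\ell^{-n}x)\to\infty$, so for large $n$ it exceeds the acylindricity threshold $R(\e)$; then $|H\cap H^{\ell^n}|\le N(\e)$, contradicting the $s$-normality of $H$ in $K$. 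This rules out the last case and completes the proof of (b).
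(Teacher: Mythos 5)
Your proof is correct. Part (a) is identical to the paper's argument. For part (b) the paper is much terser: it observes that an infinite maximal elliptic subgroup $E$ is $s$-normal in $Comm_G(E)$, invokes Corollary \ref{s-norm} (via Lemma \ref{s-n}) to conclude that $Comm_G(E)$ acts elementarily, and then uses maximality. Your argument is the same mechanism unpacked and made self-contained: your Case 3 computation (conjugating $h\in H\cap H^{\ell^n}$ to show it moves both $x$ and $\ell^{-n}x$ by at most $\e$, then applying acylindricity) is precisely the elliptic case of the paper's Lemma \ref{s-n}, with the only difference that you manufacture the far-apart pair of points from a power of a loxodromic element rather than from an arbitrary element with $\d(x,gx)\ge R$, which the paper gets from unboundedness of orbits. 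Two small things your version buys: by running the trichotomy of Theorem \ref{class} on $K=\langle H,g\rangle$ rather than on the subgroup $H$ itself, your virtually cyclic case reduces to the elementary observation that an infinite subgroup of a two-ended group contains a loxodromic power, whereas the corresponding case of the paper's Lemma \ref{s-n} goes through Theorem \ref{lox} and almost malnormality of hyperbolically embedded subgroups (Lemma \ref{maln}); and your proof of (b) does not depend on Corollary \ref{s-norm} as a black box. The trade-off is length: the paper's two-line derivation is available because Lemma \ref{s-n} has already been established.
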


Finally we show the following.

\begin{prop}\label{bgen}
Let $G$ be an acylindrically hyperbolic group. Suppose that $G=G_1\ldots G_n$ for some subgroups $G_1, \ldots, G_n$ of $G$. Then $G_i$ is acylindrically hyperbolic for at least one $i$.
\end{prop}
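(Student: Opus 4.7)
The plan is to apply Theorem~\ref{class} to each subgroup $G_i$ with respect to a fixed non-elementary acylindrical action of $G$, and then rule out the scenario in which every $G_i$ is elementary.

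Fix such an action of $G$ on a hyperbolic space $S$ provided by (AH$_2$), with basepoint $s_0 \in S$. The restriction to each $G_i$ is still acylindrical, so Theorem~\ref{class} assigns each $G_i$ to one of the three classes (a), (b), (c). If some $G_i$ is of type (c), then the restricted action of $G_i$ on $S$ is itself non-elementary and acylindrical, so $G_i$ satisfies (AH$_2$) and is acylindrically hyperbolic, and we are done.

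The remaining task is to contradict the non-elementarity of $G$'s action under the assumption that every $G_i$ is elementary: each is either elliptic with orbit diameter at most some $D_i$ (type (a)) or virtually cyclic with loxodromic generator $h_i$ and quasi-axis $\gamma_i$ (type (b)); let $B$ denote the set of type-(b) indices. Any factorisation $g = g_1 \cdots g_n$ produces a waypoint path $s_0, g_1 s_0, g_1 g_2 s_0, \ldots, g s_0$ whose $i$th leg is a translate of $G_i \cdot s_0$---of length at most $D_i$ if $i \notin B$, or within bounded Hausdorff distance of a $G$-translate of $\gamma_i$ if $i \in B$. My concrete plan is to take a loxodromic $g \in G$ and analyse decompositions $g^m = g_1^{(m)} \cdots g_n^{(m)}$ as $m \to \infty$. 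Since $d(s_0, g^m s_0) \sim m\,\tau(g)$ while the type-(a) legs contribute a uniformly bounded amount, a pigeonhole argument selects a single index $i_0 \in B$ and a subsequence along which $g_{i_0}^{(m)}$ agrees, up to bounded error, with $h_{i_0}^{k_m}$ for some $|k_m| \to \infty$. Standard $\delta$-hyperbolic estimates---stability of quasi-geodesics, Gromov products, acylindricity---then force the quasi-axis of $g$ to lie uniformly close to a $G$-translate of $\gamma_{i_0}$, hence $g$ lies in a $G$-conjugate of the maximal virtually cyclic subgroup $E(h_{i_0})$ containing $h_{i_0}$. Applying this argument to infinitely many independent loxodromic elements of $G$, and using that in acylindrical actions distinct maximal virtually cyclic subgroups intersect in finite subgroups (so the $G$-orbits of the finitely many $E(h_i)$, $i\in B$, cannot accommodate all the required distinct independent axis pairs), yields a contradiction.

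The main obstacle is the geometric step: translating ``a single long leg of the waypoint path approximately follows a translate of $\gamma_{i_0}$'' into ``the quasi-axis of $g$ itself is close to that translate''. This requires careful $\delta$-hyperbolic estimates to prevent the remaining legs from rotating the long leg away from the direction of $g$'s axis. Potential infiniteness of the type-(a) subgroups is harmless here, since only the uniform orbit bound $D_i$ enters the argument.
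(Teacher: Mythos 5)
Your reduction is fine up to the point where every $G_i$ is of type (a) or (b), and your intermediate geometric goal (every loxodromic $g\in G$ lies in a conjugate of $E(h_{i_0})$ for some $i_0\in B$) is plausibly reachable, though as written it is not reached: the waypoint path $s_0, g_1s_0,\ldots, g^ms_0$ is not a quasi-geodesic, its legs can largely cancel, and the claim that the long type-(b) leg forces the axis of $g$ close to a translate of $\gamma_{i_0}$ is exactly the step you defer. (It is fixable: iterating the thin-triangle condition shows $[s_0,g^ms_0]$ is covered, up to $O(n\delta)$, by subsegments of the legs, and then WPD/acylindricity converts long fellow-travelling into coincidence of limit pairs.) The fatal problem is the last step. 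From ``every loxodromic element of $G$ lies in a conjugate of one of the finitely many $E(h_i)$, $i\in B$'' you cannot conclude anything about the number of independent axis pairs: the conjugates $fE(h_{i_0})f^{-1}$ form an \emph{infinite} family whose limit pairs are exactly the $G$-orbit of $\{h_{i_0}^{\pm\infty}\}$ in $\partial S\times\partial S$, and that orbit is infinite precisely because the $G$-action is non-elementary. So a single conjugacy class of $E(h_{i_0})$ already accommodates infinitely many pairwise independent loxodromics (compare the free group, where every nontrivial element lies in a maximal cyclic subgroup). To get a contradiction you would instead need to produce a loxodromic element not conjugate into any of the finitely many $E(h_i)$ --- e.g.\ by showing $G$ has infinitely many conjugacy classes of maximal virtually cyclic subgroups containing loxodromics --- and that is a nontrivial fact requiring its own argument, not supplied by almost-malnormality of the $E(h_i)$.

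It is worth noting that the paper avoids this entire difficulty by \emph{changing the space} rather than fixing it. Starting from $H\hookrightarrow_h(G,X)$ with $\Gamma(G,X\sqcup H)$ acylindrical, it takes a loxodromic generator $x$ of a type-(b) factor, adjoins it to $X$, and applies the construction $Y=\{y\mid S(1,y;D)=\emptyset\}$ of Theorem \ref{XtoY} with $D\ge 3C(\mu,b)$ chosen for the quasi-geodesic constants of $\langle x\rangle$; then $x^n\in Y$ for all $n$, so $\langle x\rangle$ (hence $G_i$) becomes elliptic in $\Gamma(G,Y\sqcup H)$, while acylindricity and non-elementarity of the $G$-action are preserved and previously elliptic factors stay elliptic. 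Iterating makes all $G_i$ elliptic, whence $G=G_1\cdots G_n$ has bounded orbits by the triangle inequality, contradicting Lemma \ref{ah}. If you want to salvage your fixed-space strategy, you would at minimum need the missing ``infinitely many conjugacy classes'' input; the paper's route shows this can be bypassed entirely.
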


If $G=G_1\ldots G_n$, then one says that $G$ is \emph{boundedly generated} by subgroups $G_1, \ldots, G_n$. It is known that an acylindrically hyperbolic group $G$ cannot be boundedly generated by cyclic subgroups. Indeed this easily follows from the result of Bestvina-Fujiwara \cite{BF} about infiniteness of the dimension of the space of non-trivial quasimorphisms on $G$; for details we refer to the discussion after Theorem 2.30 in \cite{DGO}. More generally, the same result can be derived for amenable subgroups $G_1, \ldots, G_n$ using quasi-cocycles with coefficients in $\ell ^2(G)$. (This is just a combination of results from \cite{Thom} and \cite{Ham} or \cite{HO}). Note that every acylindrically hyperbolic group is non-amenable since it contains non-abelian free subgroups by the standard ping-pong argument. Thus our Proposition \ref{bgen} can be thought of as a generalization of the above-mentioned results.

The paper is organized as follows. The next section contains some preliminary information on hyperbolic spaces, group actions, and hyperbolically embedded subgroups. In Section 3, we discuss the general classification of group actions on hyperbolic spaces and prove Theorem \ref{class}. In Section 4, we review and generalize some technical tools from \cite{HO} necessary for the proof of Theorem \ref{main}; the proof itself is given in Section 5. Section 6 is devoted to loxodromic elements and the proof of Theorem \ref{lox}. Corollaries \ref{s-norm} and \ref{ell}, Proposition \ref{bgen}, and other applications are discussed in Section 7. Finally in Section 8, we give a brief survey of known examples and results about acylindrically hyperbolic groups.

{\bf Acknowledgment.} I am grateful to Francois Dahmani, Ashot Minasyan, Alexander Olshanskii, and many other colleagues with whom I discussed various topics related to this paper. I am also grateful to the anonymous referee for useful remarks. Finally, I would like to thank my students Sahana Balasubramanya and Bryan Jacobson for careful reading of the manuscript and pointing out numerous misprints and inaccuracies.

%%%%%%%%%%%%%%%%%%%%%%%%%%%%%%%%%%%%%%%%%%%%%%%%%%%%%%%%%%%%%%%%%%%%%

\section{Preliminaries}

%%%%%%%%%%%%%%%%%%%%%%%%%%%%%%%%%%%%%%%%%%%%%%%%%%%%%%%%%%%%%%%%%%%%%

\paragraph{2.1. General notation and conventions.} All generating sets considered in this paper are supposed to be symmetric, i.e., closed under taking inverse elements. If $G$ is a group generated by  $X$, we write $U\equiv W$ for two words $U, W$ in $X$ if they are equal as words and $U=_GV$ if they represent the same element of $G$. The word length $|\cdot |_X$ on a group $G$ corresponding to a (not necessary generating) set $X$ is defined by letting $|g|_X$ be the length of a shortest word in $X\cup X^{-1}$ representing $g$ if $g\in \langle X\rangle $ and $|g|_X=\infty $ otherwise. The corresponding metric on $G$ is denoted by $\d_S$; thus $\d_S(f,g)=|f^{-1}g|_S$.
Further we denote by $\Gamma (G, X)$ the corresponding Cayley graph. By a path $p$ in a Cayley graph we always mean a combinatorial path; we denote the label of $p$ by $\Lab (p)$ and we denote the origin and terminus of $p$ by $p_-$ and $p_+$, respectively. The length of $p$ is denoted by $\ell (p)$.

Given a group $G$ acting on a space $S$ and a subset $A\le S$, we denote by $gA$ the image of  $A$ under the action of an element $g\in G$. For two points $x,y\in S$, $[x,y]$ denotes a geodesic going from $x$ to $y$ in $S$.

\paragraph{2.2. Hyperbolic spaces.} We employ the definition of a  $\delta$-hyperbolic space via the Rips condition. That is, a metric space $S$ is $\delta$-hyperbolic if it is geodesic and for any geodesic triangle $\Delta $ in $S$, every side of $\Delta $ is contained in the union of the $\delta$-neighborhoods of the other two sides.

Recall that the \emph{Gromov product} of points $x,y$ with respect to a point $z$ in a metric space $(S,d)$ is defined by
$$
(x,y)_z=\frac12( \d(x,z) +\d (y,z) -\d (x,y)).
$$

We list here some results about hyperbolic spaces used in this paper. The first one is is a particular case of Theorem 16 in Chapter 5 of \cite{GH}. It is similar to the lemma proved in Sec. 7.2C of \cite{Gro} (see also \cite[Lemma 5]{IO}).

\begin{lem}\label{GH}
Let $(S,d)$ be a $\delta$-hyperbolic metric space, $s_0, \ldots , s_n$ a sequence of points in $S$ such that
\begin{equation}\label{dsi}
\d (s_{i-1}, s_{i+1})\ge \max \{ \d (s_{i-1}, s_i), \d (s_{i+1}, s_i)\} +18\delta +1
\end{equation}
for every $i\in \{ 1, \ldots, n-1\}$. Then $\d (s_0, s_n)\ge n$.
\end{lem}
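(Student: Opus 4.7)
The plan is to prove the statement by induction on $n$ together with an auxiliary bound on the Gromov products $(s_0, s_k)_{s_{k-1}}$. Writing $\ell_j := \d(s_{j-1}, s_j)$, the hypothesis (\ref{dsi}) translates into the Gromov-product form
\[
2(s_{i-1}, s_{i+1})_{s_i} \;=\; \ell_i + \ell_{i+1} - \d(s_{i-1}, s_{i+1}) \;\le\; \min\{\ell_i, \ell_{i+1}\} - 18\delta - 1,
\]
which in particular forces $\ell_j \ge 18\delta + 1$ for every $j$ and says that every ``bend'' of the broken path $s_0 s_1 \cdots s_n$ is small.

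Next I would prove by induction on $k \in \{2, \ldots, n\}$ the auxiliary claim
\[
(s_0, s_k)_{s_{k-1}} \;\le\; \tfrac12 \min\{\ell_{k-1}, \ell_k\} - 8\delta - \tfrac12.
\]
The base case $k = 2$ is exactly the above Gromov-product translation at $i = 1$. For the inductive step $k \ge 3$, I would apply the Gromov four-point condition at basepoint $s_{k-1}$ to the three products
\[
A = (s_0, s_{k-2})_{s_{k-1}},\quad B = (s_{k-2}, s_k)_{s_{k-1}},\quad C = (s_0, s_k)_{s_{k-1}},
\]
which must satisfy that their two smallest values differ by at most $\delta$. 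Combining the elementary identity $(s_0, s_{k-1})_{s_{k-2}} + (s_0, s_{k-2})_{s_{k-1}} = \ell_{k-1}$ with the inductive bound on $(s_0, s_{k-1})_{s_{k-2}}$ yields $A \ge \tfrac12 \ell_{k-1} + 8\delta + \tfrac12$, which exceeds the hypothesis bound on $B$ by at least $17\delta + 1 > \delta$. Therefore $A$ cannot be among the two smallest, so the four-point condition forces $C \le B + \delta$, which after substituting the hypothesis bound on $B$ is precisely the desired inductive estimate.

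Finally, from the definition
\[
\d(s_0, s_k) \;=\; \d(s_0, s_{k-1}) + \ell_k - 2(s_0, s_k)_{s_{k-1}}
\]
and the auxiliary claim, a short case split on the sign of $\ell_k - \ell_{k-1}$ gives $\d(s_0, s_k) \ge \d(s_0, s_{k-1}) + 16\delta + 1 \ge \d(s_0, s_{k-1}) + 1$ for every $k \ge 2$; for $k = 1$ the estimate $\d(s_0, s_1) = \ell_1 \ge 18\delta + 1 \ge 1$ is immediate. Telescoping these unit increments yields $\d(s_0, s_n) \ge n$.

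The main obstacle is identifying the right auxiliary invariant. The bound $(s_0, s_k)_{s_{k-1}} \le \tfrac12\min\{\ell_{k-1}, \ell_k\} - O(\delta)$ is tailored so that it is simultaneously self-propagating under the Gromov four-point inequality and sharp enough to produce a net additive gain of at least $1$ at each step; the margin ``$+18\delta + 1$'' in the hypothesis is precisely what is required to push the four-point inequality across strictly and to leave a full unit of progress after absorbing the Gromov-product loss.
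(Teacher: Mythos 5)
The paper does not actually prove Lemma \ref{GH}: it is quoted as a particular case of Theorem 16 in Chapter 5 of \cite{GH}, so there is no in-text argument to compare yours against. Your proof is the standard self-contained argument for statements of this type --- an induction showing that the Gromov products $(s_0,s_k)_{s_{k-1}}$ stay small, so that each new point contributes an additive gain to $\d(s_0,s_k)$ --- and its structure is sound: the translation of (\ref{dsi}) into Gromov products, the identity $(s_0,s_{k-1})_{s_{k-2}}+(s_0,s_{k-2})_{s_{k-1}}=\d(s_{k-2},s_{k-1})$, and the use of the four-point inequality to force $C\le B+{\rm const}$ are all correct.

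One constant needs attention. You invoke the four-point condition with defect $\delta$, but the paper defines $\delta$-hyperbolicity via the Rips condition, for which the Gromov-product inequality is only asserted with defect $8\delta$ (Lemma \ref{GHs23}(a)). With that constant your inductive step yields $C\le B+8\delta\le \frac12\min\{\ell_{k-1},\ell_k\}-\delta-\frac12$, which is strictly weaker than the invariant you state (with $-8\delta-\frac12$), so as written the induction does not close. The fix is routine and preserves your architecture: take the auxiliary invariant to be $(s_0,s_k)_{s_{k-1}}\le\frac12\min\{\ell_{k-1},\ell_k\}-\delta-\frac12$. The base case gives the stronger $-9\delta-\frac12$; in the step one still has $A-B\ge 10\delta+1>8\delta$, so $A$ cannot realize the minimum and $C\le B+8\delta$ propagates the invariant; and the final estimate becomes $\d(s_0,s_k)\ge\d(s_0,s_{k-1})+2\delta+1\ge\d(s_0,s_{k-1})+1$, which is all that is needed (no case split on the sign of $\ell_k-\ell_{k-1}$ is required, since $\ell_k\ge\min\{\ell_{k-1},\ell_k\}$ always). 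A last pedantic point: the bound $\ell_j\ge 18\delta+1$ only follows from (\ref{dsi}) when $n\ge 2$; for $n=1$ the hypothesis is vacuous and the conclusion can fail, but that is an artifact of the statement as quoted rather than of your argument.
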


\begin{rem}\label{dsieq}
It is sometimes convenient to rewrite (\ref{dsi}) in the following equivalent form using Gromov products:
$$
2(s_{i-1}, s_{i+1})_{s_i} \le \min \{ \d (s_{i-1}, s_i), \d (s_{i+1}, s_i)\} -18\delta -1.
$$
\end{rem}

The next lemma is well-known (see the proof of Proposition 21 and Chapter 2 of \cite{GH}). It relates various definitions of a hyperbolic space.
\begin{lem}\label{GHs23}
Let $S$ be a $\delta$-hyperbolic space. Then the following hold.
\begin{enumerate}
\item[(a)] For any $x,y,z,t\in S$, we have
\begin{equation}\label{Gprod}
(x,z)_t\ge \min \{ (x,y)_t, (y,z)_t\} -8\delta.
\end{equation}
\item[(b)] For any $x,y,z\in S$ and any $u\in [x,y]$, $v\in [x,z]$ such that $\d(x,u)=\d(x,v)\le (y,z)_x$, we have $\d(u,v)\le 4\delta$.
\end{enumerate}
\end{lem}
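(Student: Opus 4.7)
The plan is to establish (b) first, using the Rips thin-triangle condition directly, and then derive (a) from (b) by comparing Gromov products along a single geodesic.

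For (b), set $a := (y,z)_x$. Given $u \in [x,y]$ and $v \in [x,z]$ at common distance $s \le a$ from $x$, Rips-hyperbolicity applied to the geodesic triangle $xyz$ yields $u' \in [x,z]\cup[y,z]$ with $\d(u,u') \le \delta$. If $u' \in [x,z]$, the equality $\d(x,u)=\d(x,v)=s$ forces $|\d(x,u')-s| \le \delta$, hence $\d(u,v) \le 2\delta$. If instead $u' \in [y,z]$, I would use the exact identity $\d(u,y) = \d(x,y) - s$ together with the triangle inequality to bound $\d(u',y)+\d(u',z) = \d(y,z)$ from below by $\d(u,y)+\d(u,z)-2\delta$; plugging in $\d(u,z) \ge \d(x,z)-s$ and simplifying shows this case cannot occur unless $s \ge a - \delta$. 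Thus for $s \le a-\delta$ we always fall in the first case and get $\d(u,v) \le 2\delta$; for the remaining window $s \in (a-\delta,\, a]$ I would replace $u,v$ by the corresponding pair at distance $a-\delta$ (already handled) and absorb the gap via the triangle inequality, yielding $\d(u,v) \le 4\delta$.

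For (a), set $\alpha := \min\{(x,y)_t,(y,z)_t\}$ and assume without loss of generality $\alpha = (x,y)_t$. Let $p$ be the point on $[t,y]$ at distance $\alpha$ from $t$. Applying (b) once to the triangle $tyx$ (legal because $\alpha = (x,y)_t$) and once to the triangle $tyz$ (legal because $\alpha \le (y,z)_t$), I obtain $p' \in [t,x]$ and $p'' \in [t,z]$, both at distance $\alpha$ from $t$ and both within $4\delta$ of $p$, so $\d(p',p'') \le 8\delta$. The triangle inequality $\d(x,z) \le \d(x,p') + \d(p',p'') + \d(p'',z)$, combined with $\d(x,p') = \d(t,x) - \alpha$ and $\d(z,p'') = \d(t,z) - \alpha$, then converts directly into $2(x,z)_t \ge 2\alpha - 8\delta - 8\delta$, i.e.\ $(x,z)_t \ge \alpha - 8\delta$.

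The only nontrivial step is the case distinction in (b); everything else is triangle-inequality bookkeeping. Since both statements are classical (cf.\ Chapter~2 of \cite{GH}), I would not try to sharpen the $4\delta$ and $8\delta$ constants, as these are already sufficient for the applications later in the paper.
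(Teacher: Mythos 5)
Your proof is correct, but note that the paper does not actually prove this lemma: it is stated as a known fact with a pointer to Chapter 2 of \cite{GH}, so there is no in-text argument to compare against. Your self-contained derivation from the Rips condition is sound. In (b), the case analysis and the computation showing that the ``far side'' case forces $s\ge (y,z)_x-\delta$ are both right; the only loose ends are trivial: at the boundary value $s=a-\delta$ the second case is not actually excluded (your inequality is non-strict), so you should run the first case for $s_0<a-\delta$ and absorb a gap of at most $\delta+\e$ on each geodesic before letting $\e\to 0$, and you should dispose separately of the degenerate situation $(y,z)_x<\delta$, where $\d(u,v)\le 2s\le 2\delta$ outright. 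In (a), the legality checks $\alpha\le \d(t,x)$ and $\alpha\le\d(t,z)$ follow from the general bound $(x,y)_t\le\min\{\d(t,x),\d(t,y)\}$, which you use implicitly and which is worth stating. Your final arithmetic has a harmless slip: the chain $\d(x,z)\le(\d(t,x)-\alpha)+8\delta+(\d(t,z)-\alpha)$ gives $2(x,z)_t\ge 2\alpha-8\delta$, i.e.\ the stronger conclusion $(x,z)_t\ge\alpha-4\delta$, not $2\alpha-16\delta$; either way the stated bound with $8\delta$ follows a fortiori. None of these points is a genuine gap.
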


By $\partial S$ we denote the Gromov boundary of $S$. Since we do not assume that $S$ is proper, the boundary is defined as the set of equivalence classes of sequences convergent at infinity. More precisely, a sequence $(x_n)$ of elements of $S$ converges at infinity if $(x_i, x_j)_s\to \infty $ as $i,j\to \infty$ (this definition is clearly independent of the choice of $s$). Two such sequences $(x_i)$ and $(y_i)$ are equivalent if $(x_i,y_j)_s\to \infty$ as $i,j\to \infty$. If $a$ is the equivalence class of $(x_i)$, we say that the sequence $x_i$ converges to $a$. This defines a natural topology on $S\cup \partial S$ with respect to which $S$ is dense in $S\cup \partial S$.

\paragraph{2.3. Acylindricity and the Bestvina-Fujiwara WPD condition.}
The following result will be used several times in this paper.

\begin{lem}\label{=R}
The action of a group $G$ on a hyperbolic space $S$ is acylindrical if and only if for every $\e>0$ there exist $R,N>0$ such that for every two points $x,z$ satisfying $\d (x,z)=R$, we have
$$
\sharp \{ g\in G\mid \max\{ \d(x,gx), \d(y,gz)\} \le \e\} \le N.
$$
\end{lem}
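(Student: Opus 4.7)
The forward direction is immediate: under acylindricity with parameters $(R, N)$ for $\varepsilon$, the alternative condition holds with the same parameters since $\d(x,z) = R$ forces $\d(x,z) \geq R$. For the reverse direction, my plan is to reduce any pair $(x, y)$ with $\d(x, y)$ large to a pair $(x, z)$ with $\d(x, z)$ equal to the parameter $R$ supplied by the alternative condition. Concretely, given $\varepsilon > 0$, I would apply the hypothesis with $\tilde\varepsilon := 4\delta + 3\varepsilon$ to obtain $R, N$, set $R' := R + \varepsilon + 2\delta + 1$ and $N' := N$, and for any $x, y$ with $\d(x, y) \geq R'$ pick $z \in [x, y]$ with $\d(x, z) = R$. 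The goal then becomes showing that any $g$ with $\d(x, gx), \d(y, gy) \leq \varepsilon$ automatically satisfies $\d(z, gz) \leq \tilde\varepsilon$, since applying the hypothesis to the pair $(x, z)$ then bounds the count of such $g$ by $N$.

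The core geometric step is the bound on $\d(z, gz)$, which I would extract from the $2\delta$-thinness of the geodesic quadrilateral with vertices $x, y, gy, gx$, taking the $gy$-to-$gx$ side to be the geodesic $g[x, y]$. Since $z \in [x, y]$, it lies in the $2\delta$-neighborhood of the union of the three other sides. The choice of $R'$ guarantees $\d(z, y) > \varepsilon + 2\delta$, which rules out $z$ being $2\delta$-close to the short segment $[y, gy]$. In the principal subcase, $z$ is within $2\delta$ of some $q \in g[x, y]$; a quick triangle-inequality estimate using $\d(gx, gz) = R$ and $\d(x, gx) \leq \varepsilon$ yields $|\d(gx, q) - R| \leq \varepsilon + 2\delta$, and since $q$ and $gz$ both lie on $g[x, y]$ this gives $\d(z, gz) \leq 2\delta + (\varepsilon + 2\delta) = \varepsilon + 4\delta$.

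The main obstacle is that the hypothesis provides no control over the size of $R$; in particular $R$ may be arbitrarily small, and in that regime the quadrilateral argument does not force $z$ near $g[x, y]$. This is the reason for inflating $\tilde\varepsilon$ to $4\delta + 3\varepsilon$: in the remaining subcase, where $z$ is $2\delta$-close to the segment $[x, gx]$, the length $\leq \varepsilon$ of $[x, gx]$ forces $R = \d(x, z) \leq \varepsilon + 2\delta$, and then the direct estimate $\d(z, gz) \leq \d(z, x) + \d(x, gx) + \d(gx, gz) = 2R + \varepsilon \leq 4\delta + 3\varepsilon$ absorbs this small-$R$ regime. Since both subcases yield $\d(z, gz) \leq \tilde\varepsilon$, the reduction is complete.
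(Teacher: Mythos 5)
Your proof is correct and takes essentially the same route as the paper: both reduce to showing that any $g$ moving $x$ and $y$ by at most $\varepsilon$ moves the point $z$ at distance $R$ along $[x,y]$ by at most $4\delta+3\varepsilon$, using $2\delta$-thinness of the geodesic quadrilateral with sides $[x,y]$, $g[x,y]$, $[x,gx]$, $[gy,y]$ and collinearity of $gx$, $gz$ and the nearby point on $g[x,y]$. The only organizational difference is that the paper keeps the same $R$ and absorbs the short-side subcases at once (a point $2\delta$-close to $[x,gx]$ or $[gy,y]$ is $(2\delta+\varepsilon)$-close to $g[x,y]$ since those sides have length at most $\varepsilon$), whereas you enlarge $R$ to $R'$ to exclude the $[gy,y]$ case and handle the $[x,gx]$ case separately via the forced bound $R\le\varepsilon+2\delta$ --- both variants are valid.
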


\begin{figure}
  % Requires \usepackage{graphicx}
 \centering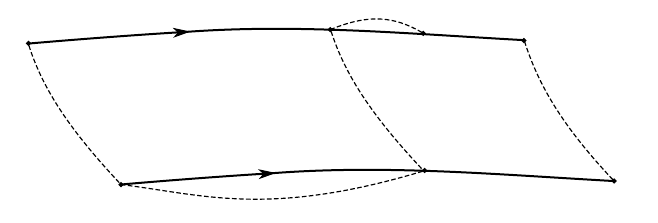\\
  \caption{}\label{fig01}
\end{figure}

\begin{proof}
The ``only if" part of the claim is obvious. Let us prove the ``if" part. Suppose that $S$ is $\delta$-hyperbolic. Fix any $\e>0$. By our assumption, there exist $R, N>0$ such that for every two points $x,z$ satisfying $\d (x,z)=R$, we have
\begin{equation}\label{xz}
| \{ g\in G\mid \max\{ \d(x,gx), \d(y,gz)\} \le 4\delta +3\e\} | \le N.
\end{equation}

Let now $\d(x, y)\ge R$ and let $p$ be a geodesic in $S$ connecting $x$ to $y$. Let $z$ be the point on $p$ such that $\d (x,z)=R$. Let $A$ be the subset consisting of all $g\in G$ satisfying $\max\{ \d(x,gx), \d(y,gy)\} \le \e$. Consider any $g\in A$ and let $q=gp$; thus $q$ is a geodesic connecting $gx$ to $gy$ (see Fig. \ref{fig01}). Drawing a diagonal in the geodesic quadrangle $[x,gx]q[gy,y]p^{-1}$ and applying the Rips condition twice, we obtain that $\d (z, [x,gx]q[gy,y])\le 2\delta $. Since $\ell ([x,gx])\le \e$ and $\ell ([gy,y])\le \e$, we obtain $\d (z, q) \le 2\delta + \e$. Let $z_0$ be a point on $q$ such that $$\d (z,z_0)\le 2\delta +\e.$$ Since $gx$, $z_0$, and $gz$ belong to the same geodesic $q$, we have
$$
\d (z_0, gz)  =  |\d (gx, z_0) - \d (gx, gz)| =  | \d (gx, z_0)- \d (x, z)|  \le  \d (x,gx) + \d (z, z_0)\le 2\delta + 2\e.
$$
Hence $$\d (z, gz)\le \d (z, z_0)+\d (z_0, gz) \le 4\delta +3\e.$$ By (\ref{xz}), we obtain $|A|\le N$.
\end{proof}

The following definition is due to Bestvina and Fujiwara \cite{BF}.

\begin{defn}\label{WPD}
Let $G$ be a group acting on a hyperbolic space $S$, $h$ an element of $G$.  One says that $h$ satisfies the {\it weak proper discontinuity} condition (or $h$ is a {\it WPD element}) if for every $\e >0$ and every $x\in S$, there exists $M\in \mathbb N$ such that
\begin{equation}\label{eq: wpd}
| \{ g\in G \mid \d (x, g(x))<\e, \;   \d (h^M(x), gh^M(x))<\e \} |<\infty .
\end{equation}
If every loxodromic element satisfies the WPD condition, one says that $G$ acts on $S$ \emph{weakly properly discontinuously}. Obviously this is the case if $G$ acts on $S$ acylindrically.
\end{defn}

In this paper we will use the following result, which follows immediately from \cite[Proposition 6]{BF}. It was also proved by Hamenst\"adt for acylindrical actions \cite{Ham}.

\begin{lem}[Bestvina--Fujiwara]\label{wpd-el}
Let $G$ be a group acting weakly properly discontinuously on a hyperbolic space and containing a loxodromic element. Then either $G$ is virtually cyclic, or contains infinitely many independent loxodromic elements.
\end{lem}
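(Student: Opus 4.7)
I would follow a standard three-step strategy. First, show that for a loxodromic WPD element $g$, the setwise stabiliser $E(g)$ of its endpoint pair on $\partial S$ is virtually cyclic. Second, use this to produce, when $G$ is not virtually cyclic, a second loxodromic element whose endpoints on $\partial S$ are disjoint from those of $g$. Third, run a ping-pong argument to extract infinitely many independent loxodromic elements.

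For the first step, I would set
\begin{equation*}
E(g)=\{h\in G\mid h\{g^{+\infty},g^{-\infty}\}=\{g^{+\infty},g^{-\infty}\}\}
\end{equation*}
and fix a basepoint $x$ on a quasi-axis of $g$. Any $h\in E(g)$ coarsely translates this quasi-axis, so there is an integer $n(h)$ with $d(hx,g^{n(h)}x)\le C$ for a uniform constant $C$ depending only on $g$, $x$, and the hyperbolicity constant. Replacing $h$ by $g^{-n(h)}h$ yields a representative of the coset $h\langle g\rangle$ that displaces $x$ by at most $C$ and, for $M$ sufficiently large, also displaces $g^{M}x$ by a controlled amount. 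The WPD condition applied to $g$ at the pair $(x,g^{M}x)$ then caps the number of such coset representatives, so $[E(g):\langle g\rangle]<\infty$ and $E(g)$ is virtually cyclic.

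For the second step, if $G$ is not virtually cyclic then $G\ne E(g)$; pick $h\in G\setminus E(g)$ and set $g'=hgh^{-1}$, which is loxodromic with endpoints $h\{g^{\pm\infty}\}\ne\{g^{\pm\infty}\}$. If the two pairs are disjoint we proceed; otherwise they share exactly one boundary point, and a short additional manoeuvre (replacing $h$ by $g^{N}h$ for large $N$ to displace the shared endpoint, or applying step one to a boundary-point stabiliser) produces a conjugate of $g$ whose endpoints are disjoint from $\{g^{\pm\infty}\}$. For the third step, classical North--South dynamics applied to sufficiently large powers $g^{N}$ and $(g')^{N}$ give a ping-pong configuration, so $F:=\langle g^{N},(g')^{N}\rangle$ is free of rank $2$ and every non-trivial element of $F$ acts loxodromically on $S$ with well-defined endpoints determined by the ping-pong sets. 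Within $F$ the family $\{a^{n}b\mid n\ge 1\}$, with $a=g^{N}$ and $b=(g')^{N}$, is pairwise non-commensurable, and the ping-pong setup forces non-commensurable elements of $F$ to have disjoint fixed-point pairs on $\partial S$; this yields infinitely many independent loxodromic elements of $G$.

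The main obstacle is step one: upgrading the WPD hypothesis (a finiteness statement for a single set of group elements) to the structural statement that $E(g)$ is virtually cyclic. The coset analysis along the quasi-axis, together with a correct choice of the scale $M$, is the crucial technical manoeuvre. The shared-endpoint subcase in step two is a minor case analysis, and step three is routine ping-pong once the endpoints have been arranged to be disjoint.
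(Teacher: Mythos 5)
The paper does not actually prove this lemma: it quotes it as an immediate consequence of \cite[Proposition 6]{BF} (with Hamenst\"adt's \cite{Ham} cited for the acylindrical case), so what you are doing is reconstructing the Bestvina--Fujiwara argument. Your three-step architecture is the standard one, and steps one and three are essentially correct. One small repair is needed in step one: an element $h$ of the setwise stabiliser $E(g)$ may reverse the orientation of the quasi-axis, and for such $h$ the element $g^{-n(h)}h$ moves $x$ boundedly but sends $g^{M}x$ to a point near $g^{-M}x$, so its displacement of $g^{M}x$ is \emph{not} controlled. You should first pass to the subgroup $E^{+}(g)$ fixing $g^{+\infty}$ and $g^{-\infty}$ individually (index at most $2$ in $E(g)$) and run the coset count there; the conclusion $[E(g):\langle g\rangle]<\infty$ is unaffected.

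The genuine gap is the shared-endpoint case in step two. Your first proposed fix does not work: the shared boundary point lies in $\{g^{\pm\infty}\}$ and is therefore fixed by $g^{N}$, so replacing $h$ by $g^{N}h$ leaves the intersection $g^{N}h\{g^{\pm\infty}\}\cap\{g^{\pm\infty}\}$ nonempty --- left multiplication by powers of $g$ can never separate the two endpoint pairs. Your second suggestion (``apply step one to a boundary-point stabiliser'') points in the right direction but is not a corollary of step one; what is actually needed is the separate statement that for a loxodromic WPD element $g$, any $t$ with $t\{g^{\pm\infty}\}\cap\{g^{\pm\infty}\}\ne\emptyset$ already lies in $E(g)$, i.e.\ the fixed-point pairs of conjugates of $g$ are either equal or disjoint. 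This is a second WPD argument of the same weight as step one, not a minor case analysis: if, say, $tg^{+\infty}=g^{+\infty}$, the quasi-axes $q$ and $tq$ are asymptotic, so one can choose integers $k_n$ with $tg^{n}x$ uniformly close to $g^{k_n}x$ for all large $n$; the elements $w_n=g^{-k_n}tg^{n}$ then move both $x$ and $g^{M}x$ by an amount bounded independently of $n$ and $M$, so the WPD condition forces $w_n=w_{n'}$ for some $n\ne n'$, which gives $t^{-1}g^{a}t=g^{b}$ with $a,b\ne 0$ and hence $t\in E(g)$. With this lemma in place, any $h\notin E(g)$ yields a conjugate $hgh^{-1}$ independent of $g$, and your ping-pong step (together with the standard fact that the Schottky subgroup is quasi-isometrically embedded, which is what justifies ``non-commensurable in $F$ implies disjoint endpoint pairs in $\partial S$'') completes the proof.
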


\paragraph{2.4.  Hyperbolically embedded subgroups}
Let $G$ be a group with a fixed collection of subgroups $\Hl $.
Given a subset $X\subseteq G$ such that $G$ is generated by $X$ together with the union of all $H_\lambda$'s,  we denote by $\G $ the Cayley graph of $G$ whose edges are labeled by letters from the alphabet $X\sqcup\mathcal H$, where
\begin{equation}\label{calH}
\mathcal H= \bigsqcup\limits_{\lambda \in \Lambda } H_\lambda.
\end{equation}
That is, two vertices $g,h\in G$ are connected by an edge going from $g$ to $h$ and labeled by $a\in X\sqcup\mathcal H$ iff $a$ represents the element $g^{-1}h$ in $G$.

\begin{rem}\label{disj}
It is important that the unions in the definition above are disjoint. For example, it means that for every $h\in H_\lambda \cap H_\mu$, the alphabet $\mathcal H$ will have two letters representing the element $h$ in $G$: one in $\mathcal H_\lambda$ and the other in $\mathcal H_\mu$. It can also happen that a letter from $\mathcal H$ and a letter from $X$ represent the same element of $G$. If several letters from $X\sqcup\mathcal H$  represent the same element in $G$, then $\G $ has multiple edges corresponding to these letters. In particular, at every vertex of $\G$ we have a bunch of loops labelled by the letter $1\in H_\lambda$ for every $\lambda \in \Lambda$. We could avoid these redundant loops by using $H_\lambda \setminus\{ 1\}$ instead of $H_\lambda$ in (\ref{calH}), but they do not create any problems.
\end{rem}

In what follows, we think of the Cayley graphs $\Gamma (H_\lambda, H_\lambda  )$ as a complete subgraphs of $\G $.

\begin{defn}\label{dl}
For every $\lambda \in \Lambda $, we introduce a \textit{relative metric} $\dl \colon H_\lambda \times H_\lambda \to [0, +\infty]$ as follows. We say that a (combinatorial) path $p$ in $\G$ is {\it $\lambda$-admissible} (or simply \emph{admissible} if no confusion is possible) if it contains no edges of $\Gamma (H_\lambda, H_\lambda )$. Note that we do allow $p$ to pass through vertices of $\Gamma (H_\lambda, H_\lambda )$ as well as to have edges labelled by letters from $H_\lambda$ (provided these edges are not in $\Gamma (H_\lambda, H_\lambda )$). Let $\dl (h,k)$ denote the length of a shortest admissible path in $\G $ that connects $h$ to $k$. If no such a path exists, we set $\dl (h,k)=\infty $. Clearly $\dl $ satisfies the triangle inequality.
\end{defn}

\begin{figure}
  % Requires \usepackage{graphicx}
  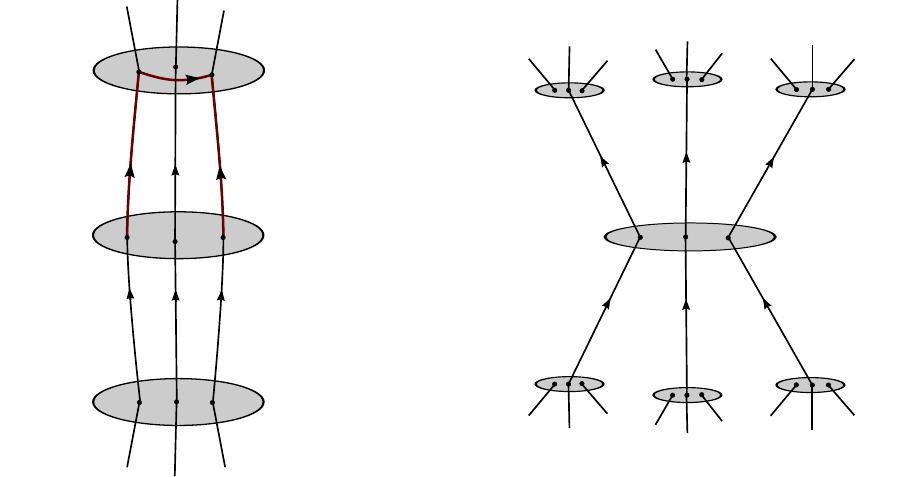
  \caption{Cayley graphs $\Gamma(G, X\sqcup H)$ for $G=H\times \mathbb Z$ and $G=H\ast \mathbb Z$.}\label{fig0}
\end{figure}

\begin{defn}\label{he-def}
Let $G$ be a group, $X$ a (not necessary finite) subset of $G$. We say that a collection of subgroups $\Hl$ of $G$ is \emph{\he in $G$ with respect to $X$} (we write $\Hl \h (G,X)$) if the following conditions hold.
\begin{enumerate}
\item[(a)] The group $G$ is generated by $X$ together with the union of all $H_\lambda$ and the Cayley graph $\G $ is hyperbolic.
\item[(b)] For every $\lambda\in \Lambda $, the metric space $(H_\lambda, \dl )$ is proper. That is, any ball of finite radius in $H_\lambda $ contains finitely many elements.
\end{enumerate}
Further we say  that $\Hl$ is \emph{hyperbolically embedded} in $G$ and write $\Hl\h G$ if $\Hl\h (G,X)$ for some $X\subseteq G$.
\end{defn}

Note that for any group $G$ we have $G\h (G, \emptyset)$.  Indeed in this case $X\sqcup \mathcal H=G$ and the Cayley graph $\Gamma(G, X\sqcup H)$ has diameter $1$. The corresponding relative metric satisfies $\widehat d(h_1, h_2)=\infty $ whenever $h_1\ne h_2$. Further, if $H$ is a finite subgroup of a group $G$, then $H\h (G, G)$. These cases are referred to as {\it degenerate}.

Since the notion of a hyperbolically embedded subgroup plays a crucial role in this paper, we consider two additional examples borrowed from \cite{DGO}.

\begin{ex}
\begin{enumerate}
\item[(a)]
Let $G=H\times \mathbb Z$ and let $X=\{ x\} $, where $x$ is a generator of $\mathbb Z$. Then $\Gamma (G, X\sqcup H)$ is quasi-isometric to $\mathbb R$ and hence it is hyperbolic. However the corresponding relative metric satisfies the inequality $\widehat\d(h_1, h_2)\le 3$ for every $h_1, h_2\in H$. Indeed let $\Gamma _H$ denote the Cayley graph $\Gamma (H, H) $. In the shifted copy $x\Gamma _H$ of $\Gamma _H$ there is an edge labeled by $h_1^{-1}h_2\in H$ that connects $h_1x$ to $h_2x$. Thus  there is an admissible path of length $3$ connecting $h_1$ to $h_2$ (see Fig. \ref{fig0}). If $H$ is infinite, this implies $H\not\h (G,X)$.

\item[(b)]  Let $G=H\ast \mathbb Z$, $X=\{ x\} $, where $x$ is a generator of $\mathbb Z$. In this case the Cayley graph $\Gamma (G, X\sqcup H)$ is quasi-isometric to a tree (see Fig. \ref{fig0}) and $\widehat\d(h_1, h_2)=\infty $ unless $h_1=h_2$. Thus $H\h (G,X)$.
\end{enumerate}
\end{ex}

We will need a result from \cite{DGO}, which can be thought as a generalization of the first example above.

\begin{lem}[{\cite[Proposition 2.10]{DGO}}]\label{maln}
Let $G$ be a group, $H$ a hyperbolically embedded subgroup of $G$. Then $H$ is almost malnormal, i.e. $H\cap H^g$ is finite for every $g\in G\setminus H$.
\end{lem}

The following proposition relates the notions of a hyperbolically embedded subgroup and a relatively hyperbolic group.

\begin{prop}[{\cite[Proposition 4.28]{DGO}}]
Let $G$ be a group, $\Hl$ a finite collection of subgroups of $G$. Then $G$ is hyperbolic relative to $\Hl$ if and only if $\Hl\h (G,X)$ for some finite subset $X\subseteq G$.
\end{prop}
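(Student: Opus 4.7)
The plan is to prove the two implications separately, using the standard characterization of relative hyperbolicity in the sense of Osin and Bowditch: there exists a finite subset $X\subseteq G$ with $G = \langle X\cup\bigcup_{\lambda\in\Lambda}H_\lambda\rangle$, the Cayley graph $\G$ is hyperbolic, and the relative Dehn function with respect to some finite relative presentation $\langle X\cup\mathcal H\mid\mathcal R\rangle$ is linear. Since the generation and hyperbolicity clauses of this characterization coincide verbatim with condition~(a) of Definition~\ref{he-def}, the entire content of the proposition reduces to matching condition~(b) -- local finiteness of $(H_\lambda,\dl)$ -- with linearity of the relative Dehn function, under the assumption that $X$ is finite.

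For the forward direction, I would fix a finite relative presentation with relative Dehn function bounded by $Cn$. Given $h\in H_\lambda$ with $\dl(1,h)\le N$, pick a $\lambda$-admissible path $p$ of length at most $N$ in $\G$ from $1$ to $h$ and close it with the single $H_\lambda$-edge from $h$ back to $1$. The resulting cycle, of length at most $N+1$, admits a relative van Kampen filling of area at most $C(N+1)$. Because $\mathcal R$ is finite, only finitely many specific $H_\lambda$-letters appear in the relators, and the boundary label read along the closing $H_\lambda$-edge must equal a product of at most $C(N+1)$ conjugates of these relators. This bounds the element $h$ to a finite subset of $H_\lambda$ depending only on $N$, yielding local finiteness of $(H_\lambda,\dl)$.

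For the reverse direction, assume $\Hl\h(G,X)$ with $X$ finite. The goal is to produce a finite relator set $\mathcal R\subset F(X\sqcup\mathcal H)$ such that $\langle X\cup\mathcal H\mid\mathcal R\rangle$ has linear relative Dehn function. Hyperbolicity of $\G$ lets one triangulate any combinatorial disc diagram by geodesic triangles of uniformly bounded perimeter (controlled by $\delta$). Finiteness of $X$, together with local finiteness of $\dl$, ensures that only finitely many such triangles occur up to the $G$-action: two $\lambda$-admissible sides of a triangle that close up near a common $H_\lambda$-coset realize an element of $H_\lambda$ whose $\dl$-length is bounded by a constant depending on $\delta$, and only finitely many such elements exist by condition~(b). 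Taking one representative boundary word per $G$-orbit of triangle types as $\mathcal R$ gives the required finite relative presentation; the linear bound on the relative Dehn function is then inherited directly from the usual linear isoperimetric inequality for geodesic cycles in a hyperbolic space.

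The main obstacle is the reverse direction, specifically the extraction of a \emph{finite} relator set rather than a merely quasi-isoperimetric filling. Local finiteness of $\dl$ is exactly the condition that prevents unboundedly long detours through peripheral cosets from generating infinitely many distinct triangle shapes, and translating cleanly between admissible paths in $\G$ and combinatorial van Kampen diagrams over a chosen relative presentation is the technical heart of the equivalence.
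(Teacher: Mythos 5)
First, a point of comparison: the paper does not prove this proposition at all --- it is imported verbatim as \cite[Proposition 4.28]{DGO}, so there is no in-paper argument to measure yours against. Judged on its own terms, your forward direction is essentially the standard argument (it is Lemma 2.27 of Osin's memoir, reappearing as Lemma 4.11 in \cite{DGO}): an admissible path from $1$ to $h$ closed up by the edge labelled $h^{-1}$ gives a cycle in which that edge is an isolated $H_\lambda$-component, and a linear-area van Kampen filling over a finite relative presentation expresses $h$ as a product of boundedly many letters from the finite set $\Omega_\lambda$ of $H_\lambda$-letters occurring in $\mathcal R$; local finiteness of $(H_\lambda,\dl)$ follows. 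Your phrasing (``$h$ equals a product of conjugates of relators'') is loose --- $h$ is a product of $\Omega_\lambda$-letters read off the $\mathcal R$-cells, not of relators --- but the idea is correct.

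The reverse direction, however, contains a genuine gap. Your finiteness mechanism is the claim that geodesic triangles in $\G$ of perimeter bounded by $\delta$ fall into finitely many types up to the $G$-action. This is false: the $G$-action preserves edge labels, an edge of a geodesic in $\G$ may be labelled by \emph{any} element of the infinite alphabet $\mathcal H$, and such edges genuinely occur in bounded triangles for every $h\in H_\lambda$ (already in $G=H\ast\mathbb Z$ the triangle with vertices $1,h,hx$ is geodesic for every $h\in H$). So the set of ``triangle types'' is infinite, and local finiteness of $\dl$ does not rescue the claim, because Lemma \ref{C} bounds $\dl(a_-,a_+)$ only for \emph{isolated} components of polygons; it says nothing about the label of an individual $H_\lambda$-edge, whose $\dl$-length from $1$ is typically infinite. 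The correct route --- and the one taken in \cite{DGO} --- is to work with the relative presentation $\langle X,\mathcal H\mid \mathcal S\cup\mathcal R\rangle$ in which the full multiplication tables $\mathcal S$ of the $H_\lambda$ are free cells not counted by the relative area, and to prove that one can choose $\mathcal R$ \emph{strongly bounded}: relators of uniformly bounded length whose $\mathcal H$-letters all have $\dl$-length at most some constant, hence lie in a finite subset by condition (b). The infinitely many ``bad'' triangles of your sketch are then filled at zero relative cost by $\mathcal S$-cells after merging connected components, and only the strongly bounded relators contribute to the area. With $X$ finite, a strongly bounded $\mathcal R$ is automatically finite, which is exactly what yields Osin's definition. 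Supplying that reduction is the actual content of \cite[Theorem 4.24]{DGO}, and it is the step your proposal currently asserts rather than proves.
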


%%%%%%%%%%%%%%%%%%%%%%%%%%%%%%%%%%%%%%%%%%%%%%%%%%%%%%%%%%%%%%%%%%%%%

\section{Classification of acylindrical actions on hyperbolic spaces}\label{Secclass}

%%%%%%%%%%%%%%%%%%%%%%%%%%%%%%%%%%%%%%%%%%%%%%%%%%%%%%%%%%%%%%%%%%%%%

We recall the standard classification of groups acting on hyperbolic spaces, which goes back to Gromov \cite[Section 8.2]{Gro}. Let $G$ be a group acting on a hyperbolic metric space $S$. By $\Lambda (G)$ we denote the set of limit points of $G$ on $\partial S$. That is, $\Lambda (G)$ is the set of accumulation points of any orbit of $G$ on $\partial S$. Possible actions of groups on hyperbolic spaces break in the following $4$ classes according to $|\Lambda (G)|$.

\begin{enumerate}
\item[1)] $|\Lambda (G)|=0$. Equivalently,  $G$ has bounded orbits. In this case $G$ is called \emph{elliptic}.

\item[2)] $|\Lambda (G)|=1$. Equivalently, $G$ has unbounded orbits and contains no loxodromic elements. In this case $G$ is called \emph{parabolic}.

\item[3)] $|\Lambda (G)|=2$. Equivalently, $G$ contains a loxodromic element and any two loxodromic elements have the same limit points on $\partial S$.

\item[4)] $|\Lambda (G)|=\infty$. Then $G$ always contains loxodromic elements. In turn, this case breaks into two subcases.
\begin{enumerate}
\item[a)] Any two loxodromic elements of $G$ have a common limit point on the boundary. In this case $G$ is called \emph{quasi-parabolic}.
\item[b)] $G$ contains infinitely many independent loxodromic elements.
\end{enumerate}
\end{enumerate}

The action of $G$ is called \emph{elementary}  in cases 1)--3) and non-elementary in case 4).
Although many proofs of the above classification assume properness of $X$, it also holds in the general case (see, e.g., the arguments in Sections 8.1-8.2 in \cite{Gro} or \cite{H} for complete proofs in a more general context).

The main goal of this section is to prove Theorem \ref{class}. Compared to the general classification, theorem rules out the parabolic and quasi-parabolic cases and characterizes case 3) in algebraic terms.  Before proceeding with the proof, we consider three examples showing that the theorem can fail in various ways even for (non-acylindrical) proper or free actions.

Note first that proper actions may not be acylindrical. Moreover every countable group $G$ acts properly on a locally finite hyperbolic graph. This was noticed by Gromov \cite{Gro} and can be proved by attaching the Groves--Manning combinatorial horoball to a Cayley graph $\Gamma=\Gamma(G,X)$ with respect to a finite generating set $X$. We explain the argument in the case  when $G$ is finitely generated and refer to \cite{Hru} for the general case.

Consider the locally finite graph $\mc{H}(G)$ constructed as follows. The vertex set of  $\mc{H}(G)$ is $G\times \left( \{0\}\cup \mathbb N \right)$. The edge set of $\mc{H}(G)$ contains the following two types of edges:
\begin{enumerate}
\item[(a)] For every $k\ge 0$ and every $u,v\in G$ such that $0<\d _X(u,v)\leq 2^k$, there is an edge
  connecting $(u,k)$ to $(v,k)$.
\item[(b)]  For every $k\ge 0$ and $v\in G$, there is an edge  joining   $(v,k)$ to $(v,k+1)$.
\end{enumerate}

Clearly the map $G\to G\times \{ 0\}$ extends to an injection $\Gamma \to \mc{H}(G)$ and the action of $G$ on itself naturally extends to a parabolic action on $\mc{H}(G)$. It is not hard to show (see \cite{GM}) that the graph $\mc{H}(G)$ is always hyperbolic. Clearly the action of $G$ on $\mc{H}(G)$ is proper since $X$ is finite, but not acylindrical unless $G$ is finite. This leads to the following example.

\begin{ex}\label{ex1}
There exists a group $G$ acting properly on a locally finite hyperbolic graph such that every element of $G$ is elliptic, but orbits of $G$ are unbounded. Indeed let $G$ be a finitely generated infinite torsion group. Then the action of $G$ on $\mc{H}(G)$ satisfies all requirements. This never happens for acylindrical actions since, by Theorem \ref{class}, if every element of $G$ has bounded orbits, then $G$ has bounded orbits.
\end{ex}

\begin{ex}\label{ex2}
The action of the Baumslag-Solitar group $$BS(1,2)=\langle a,t \mid t^{-1}at=a^2\rangle $$ on its Bass-Serre tree provides an example of a quasi-parabolic action. A quasi-parabolic free action of $BS(1,2)$ on $\mathbb H^2$ can be obtained via the standard embedding $BS(1,2)\to SL_2(\mathbb R)$. Obviously the action of $BS(1,2)$ is non-elementary in both cases, but every two loxodromic elements have a common limit point.
\end{ex}

\begin{ex}\label{ex3}
Finally we note that a group $G$ acting elementary on a hyperbolic space and containing a loxodromic element can be arbitrary large (for example, consider the action of $\mathbb Z \times H$ on the line induced by the first factor, where $H$ is an arbitrary group; by replacing the line with a cylinder with base a complete graph on $H$ this action can be made free).
\end{ex}

As we explained in the introduction, our contribution to Theorem \ref{class} amounts to ruling out parabolic actions. We begin with a particular case. We say that an action of a group $G$ on a metric space $S$ is \emph{uniformly proper} if for every $\e>0$, there exists $N>0$ such that for every $s\in S$, we have
$$
|\{ g\in G\mid \d (s,gs)\le \e\}|\le N.
$$
Obviously uniform properness implies acylindricity.

The following result should be compared to Example \ref {ex1}, which shows that proper actions on hyperbolic spaces can be parabolic. It is actually due to Ivanov and Olshanskii \cite{IO}. Although it is not stated explicitly, it can be easily extracted from the proof of Lemma 17 in \cite{IO}. Our language here is slightly different, so we provide the proof for completeness.

\begin{lem}[Ivanov--Olshanskii]\label{IO}
A uniformly proper action on a hyperbolic space cannot be parabolic. That is, if $G$ is a group acting on a hyperbolic space uniformly properly with unbounded orbits, then $G$ contains a loxodromic element.
\end{lem}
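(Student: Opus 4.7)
The plan is to argue by contradiction. Suppose $G$ acts uniformly properly on a $\delta$-hyperbolic space $S$ with unbounded orbits, yet contains no loxodromic element; fix a basepoint $s\in S$. The first step forces a strong fellow-travel condition via Lemma \ref{GH}: for any $g\in G$, consider the orbit sequence $s_i=g^i s$. By $G$-invariance, the spacing condition of Lemma \ref{GH} reduces to the single inequality $d(s,g^2s)\ge d(s,gs)+18\delta+1$; if it ever held, Lemma \ref{GH} would give $d(s,g^n s)\ge n$, and combined with the triangle bound $d(s,g^n s)\le n\cdot d(s,gs)$, the map $n\mapsto g^n s$ would be a quasi-isometric embedding of $\mathbb{Z}$, forcing $g$ to be loxodromic, contrary to assumption. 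Hence for every $g\in G$,
\[
d(s,g^2s)<d(s,gs)+18\delta+1,\qquad\text{equivalently,}\qquad (g^{-1}s,gs)_s>\tfrac{1}{2}\bigl(d(s,gs)-18\delta-1\bigr).\quad(\ast)
\]

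The second step extracts an approximately fixed point via Lemma \ref{GHs23}(b). For $g$ with $D:=d(s,gs)$ large, set $\alpha:=\tfrac12(D-18\delta-1)$. By $(\ast)$ we have $(s,g^2s)_{gs}>\alpha$, so Lemma \ref{GHs23}(b) applied at $gs$ to $[gs,s]$ and $[gs,g^2s]$ produces points $u\in[gs,s]$ and $v\in[gs,g^2s]$ at distance $\alpha$ from $gs$ with $d(u,v)\le 4\delta$. Since $g$ maps $[s,gs]$ isometrically onto $[gs,g^2s]$, we may write $v=gw$ for $w\in[s,gs]$ at distance $\alpha$ from $s$; choosing $[gs,s]$ to be $[s,gs]$ with reversed orientation places $u$ at distance $D-\alpha$ from $s$ on this geodesic, giving $d(u,w)=|(D-\alpha)-\alpha|=18\delta+1$. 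Hence
\[
d(w,gw)\le d(w,u)+d(u,gw)\le 22\delta+1=:C_0,
\]
so every $g$ with $D$ sufficiently large admits an approximately fixed point $w_g\in[s,gs]$ with $d(s,w_g)\approx D/2$ and $d(w_g,gw_g)\le C_0$.

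To extract the contradiction, I use that ``no loxodromic'' combined with the standard classification forces $|\Lambda(G)|\le 1$, and unbounded orbits then give $\Lambda(G)=\{\xi\}$ for a unique parabolic fixed point $\xi\in\partial S$. Under $(\ast)$, the geodesics $[s,gs]$ for $d(s,gs)\to\infty$ fellow-travel arbitrarily long initial segments of a common ray $\gamma=[s,\xi)$, so the $w_g$'s accumulate on $\gamma$. Uniform properness forces every elliptic $\langle h\rangle$ to be finite, so a purely elliptic $G$ would be torsion; selecting an infinite-order (necessarily parabolic) $g\in G$ — the torsion case being handled by a parallel shell-clustering argument on the $w_g$'s — and using horospherical contraction along $\gamma$, one shows that many distinct powers $g^j$ move the sufficiently deep point $w_{g^N}$ by at most any prescribed $\varepsilon>0$, producing more than $N_\varepsilon$ elements in $\{h\in G:d(w_{g^N},h w_{g^N})\le\varepsilon\}$ and contradicting uniform properness. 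The main obstacle is making the horospherical contraction quantitative in a general $\delta$-hyperbolic space; this is precisely where uniform properness is indispensable (cf.\ Example \ref{ex1}, where mere properness permits parabolic actions).
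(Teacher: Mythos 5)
Your opening two steps are sound and the first one coincides with the paper's: if $d(s,g^2s)\ge d(s,gs)+18\delta+1$ ever held, Lemma \ref{GH} would make $g$ loxodromic, so one may assume $(\ast)$ for every $g$, and the almost-fixed point $w_g$ with $d(w_g,gw_g)\le 22\delta+1$ near the midpoint of $[s,gs]$ is a correct consequence of $(\ast)$ together with Lemma \ref{GHs23}(b). The endgame, however, has genuine gaps. First, $(\ast)$ constrains only the single Gromov product $(g^{-1}s,gs)_s$ for each $g$; it says nothing about $(gs,hs)_s$ for distinct $g,h$, so your claim that the geodesics $[s,gs]$ all fellow-travel long initial segments of one common ray does not follow from $(\ast)$. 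The paper needs a \emph{second} reduction exactly for this: if $2(gs,hs)_s\le \min\{d(s,gs),d(s,hs)\}-51\delta$ for some pair, then (combining $(\ast)$ with the $8\delta$-inequality for Gromov products) the element $g^{-1}h$ is shown to be loxodromic via Lemma \ref{GH} applied to the sequence $s,\,g^{-1}s,\,g^{-1}hs,\,g^{-1}hg^{-1}s,\dots$; only after ruling this out do all far orbit points lie in a narrow cone. Second, and most seriously, the case where $G$ is an infinite torsion group --- so there is no infinite-order element whose powers you can iterate --- is precisely the hard case (compare Example \ref{ex1}, where such a group acts properly, though not uniformly properly, with every element elliptic and unbounded orbits), and you dismiss it with an unspecified ``parallel shell-clustering argument.'' Third, even in the non-torsion case your horospherical step presupposes a geodesic ray $[s,\xi)$, which need not exist because $S$ is not assumed proper, and you yourself flag that making the contraction quantitative is the main unsolved obstacle; so the contradiction is never actually derived.

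For contrast, the paper's conclusion is purely metric and boundary-free: after the two reductions one fixes $x$ maximizing the cardinality of the quasi-stabilizer $A(x)=\{g\in G\mid d(x,gx)\le 71\delta\}$ (finite and uniformly bounded by uniform properness), picks $g$ with $d(x,gx)>71\delta$, and lets $y$ be the point of $[x,gx]$ at distance $5\delta$ from $x$. The cone condition forces $A(x)\subseteq A(y)$ while $d(y,gy)\le d(x,gx)-2\delta$, and iterating eventually puts $g$ into the quasi-stabilizer of some $y_n$, so $|A(y_n)|>|A(x)|$, contradicting maximality. That argument works verbatim for torsion groups and requires no ray to infinity; some version of it (or of the second reduction) is what your proposal is missing.
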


\begin{proof}
Let $G$ act on a $\delta$-hyperbolic space $S$. Without loss of generality we can assume that $\delta \ge 1$. Let $x$ be any point of $S$. We first note that if for some $g\in G$, we have $2(g^{-1}x, gx)_x\le  \d(x,gx) - 19\delta $, then the sequence of points $x, gx, g^2x, \ldots $ satisfies the assumptions of Lemma \ref{GH} (see Remark \ref{dsieq}) and hence $g$ is loxodromic. Thus we can assume that
\begin{equation}\label{dgxgx}
2(gx, g^{-1}x)_x>  \d(x,gx) - 19\delta
\end{equation}
for every $g\in G$.

Further suppose that for some $g,h\in G$ we have
\begin{equation}\label{dgxhx}
2(gx, hx)_x < a - 51\delta ,
\end{equation}
where $a=\min\{ \d(x,gx), \d(x,hx)\}$. Applying (\ref{Gprod}) twice and then (\ref{dgxgx}), we obtain
$$
\begin{array}{rcl}
2(gx, hx)_x & \ge & 2\min \{(gx, g^{-1}x)_x, (g^{-1}x,h^{-1}x )_x,(h^{-1}x, hx)_x\} - 32\delta \\&&\\
&\ge & \min\{ a-19\delta , 2(g^{-1}x, h^{-1}x)_x\} -32\delta.
\end{array}
$$
Together with (\ref{dgxhx}) this implies
\begin{equation}\label{dgh-1}
2(g^{-1}x, h^{-1}x)_x \le a - 19\delta .
\end{equation}
Inequalities (\ref{dgxhx}), (\ref{dgh-1}), and Remark \ref{dsieq} allow us to apply Lemma \ref{GH} again to the sequence of points $x,\; g^{-1}x,\; g^{-1}hx,\; g^{-1}hg^{-1}x,\; (g^{-1}h)^2x,\; \ldots $.  We conclude that $g^{-1}h$ is loxodromic. Thus it remains to consider the case when
\begin{equation}\label{gxhx-ge}
2(gx, hx)_x \ge \min\{ \d(x,gx), \d(x,hx)\} - 51\delta \;\;\;\;\; \forall\,g, h\in G.
\end{equation}
We will complete the proof by showing that (\ref{gxhx-ge})  leads to a contradiction.

\begin{figure}
  % Requires \usepackage{graphicx}
  \centering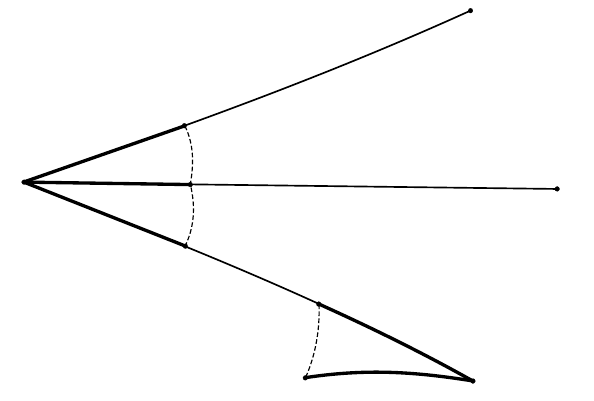
  \caption{Bold segments have length $5\delta$.}\label{fig03}
\end{figure}

Given $s\in S$, let $$A(s)=\{ g\in G\mid \d (s,gs)\le 71\delta \}.$$ By uniform properness of the action, there exists $N=\max _{s\in S} |A(s)|$. From now on, let $x$ denote a point of $S$ such that $|A(x)|=N$.

Since orbits of $G$ are unbounded, there exists $g\in G$ such that $g\notin A(x)$. That is,
\begin{equation}\label{dxgx}
\d(x,gx)> 71\delta .
\end{equation}
Let $y$ be the point on a geodesic segment $[x, gx]$ such that $\d(x,y)=5\delta$.  Consider any  $h\in G$ (possibly $h=g$) satisfying
\begin{equation}\label{dxhx}
\d(x,hx)\ge 61\delta
\end{equation}
and let $z$ (respectively, $t$) be the point on a geodesic segment $[x, h^{-1}x]$ (respectively, $[x, hx]$) such that $\d(x,z)=5\delta$ (respectively, $\d(x, t)=5\delta$, see Fig. \ref{fig03}). Since the points $x$, $t$, $hz$, and $hx$ belong to the same geodesic $[x, hx]= h [h^{-1}x,x]$, we obtain
\begin{equation}\label{txz}
\d (t, hz)= \d(x, hx) - \d (x, t) - \d(hx, hz)=\d(x, hx) - 10\delta.
\end{equation}
By (\ref{gxhx-ge}), (\ref{dxgx}), and (\ref{dxhx}) we have $(gx, h^{\pm 1}x)_x\ge 5\delta$. Hence by part (b) of Lemma \ref{GHs23}, we obtain $\d (y,z)\le 4\delta$ and $\d(y,t)\le 4\delta$. Combining this with (\ref{txz}), we obtain
\begin{equation}\label{yhy1}
\d (y, hy) \le \d(y,t) +\d (t, hz) +\d (hz, hy)\le \d(x, hx)-2\delta.
\end{equation}

Inequality (\ref{yhy1}) implies that if $h\in A(x)$ and satisfies  (\ref{dxhx}), then $h\in A (y)$.
On the other hand, if $h$ does not satisfy (\ref{dxhx}), then  we  have
$$
\d (y, hy) \le \d(y,x) +\d (x, hx) +\d (hx,hy)\le \d(x, hx) +10 \delta \le 71\delta
$$
and hence $h\in A(y)$ again. Thus $A(x)\subseteq A(y)$. In addition, inequality (\ref{yhy1}) applied to $g=h$ yields $\d (y,gy)\le \d(x, gx)-2\delta $. We now set $y_0=y$ and iterate the process, i.e., we use $y_0=y$ in place of $x$ to construct $y_1$, then use $y_1$ in place of $x$ to construct $y_2$, etc. Repeating this procedure sufficiently many times, we can ensure that $A(x)\cup \{ g\} \subseteq A(y_n)$ for some $n$. However this implies $|A(y_n)| >|A(x)|$, which contradicts the choice of $x$.
\end{proof}

To prove Theorem \ref{class}, we will need two more lemmas.

It is well-known that if a group $G$ acts on a tree and $g,h$ are two elliptic elements of $G$ with $Fix(g)\cap Fix(h)=\emptyset$, then $gh$ is loxodromic. The next lemma may be thought of as a hyperbolic analogue of this fact obtained by ``quasification"; the elements $x$ and $y$ play the role of the (quasi) fixed sets, see (\ref{max}), while the inequality (\ref{min}) is the counterpart of the disjointness condition.

\begin{lem}\label{prel}
Let $G$ be a group acting on a hyperbolic space $(S,d)$. Suppose that for some $g,h\in G$ there exist $x,y\in S$ and $C>0$ such that
\begin{equation}\label{max}
\max\{ \d (x,gx), \d(y,hy)\} \le C
\end{equation}
and
\begin{equation}\label{min}
\min \{ \d (x,hx) , \d (y, gy)\} \ge \d (x,y) + 2C +18\delta+1.
\end{equation}
Then $gh$ is a loxodromic isometry.
\end{lem}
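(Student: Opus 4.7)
The plan is to apply Lemma~\ref{GH} to a carefully chosen sequence of points in $S$, which will give linear growth of the $\langle gh\rangle$-orbit of $y$ and hence loxodromicity (``hyperbolicity'') of $gh$. Concretely, set
\[
s_0 = y, \qquad s_{2k+1} = (gh)^k x, \qquad s_{2k+2} = (gh)^k gy \qquad (k \ge 0),
\]
so that the first few terms read $y, x, gy, ghx, ghgy, (gh)^2 x, (gh)^2 gy, \ldots$. Morally, in the ``model'' case where $g$ and $h$ are involutions fixing $x$ and $y$ respectively (so $C = 0$), the product $gh$ is a translation along the line through $x$ and $y$, and the above sequence enumerates equally spaced points along this translation axis. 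The goal is to verify that this picture survives the slack permitted by (\ref{max})--(\ref{min}), by checking condition (\ref{dsi}) of Lemma~\ref{GH}.

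First, using (\ref{max}) and the triangle inequality, every consecutive distance satisfies $\d (s_i, s_{i+1}) \le \d (x,y) + C$. Indeed, by $\langle gh \rangle$-invariance one reduces to the three cases $\d (y,x) = \d (x,y)$, $\d (x, gy) \le \d (x,gx) + \d (gx, gy) \le C + \d (x,y)$, and $\d (gy, ghx) = \d (y, hx) \le \d (y,hy) + \d (hy, hx) \le C + \d (x,y)$. Next, every ``jump'' $\d (s_{i-1}, s_{i+1})$ is at least $\d (x,y) + C + 18\delta + 1$: the initial jump $\d (s_0, s_2) = \d (y,gy) \ge \d (x,y) + 2C + 18\delta + 1$ is given directly by (\ref{min}), while the two recurring jumps satisfy
\[
\d (s_{2k+1}, s_{2k+3}) = \d (x, ghx) \ge \d (x,hx) - \d (x,gx) \ge \d (x,y) + C + 18\delta + 1,
\]
\[
\d (s_{2k}, s_{2k+2}) = \d (gy, ghgy) = \d (y, hgy) \ge \d (y,gy) - \d (y,hy) \ge \d (x,y) + C + 18\delta + 1,
\]
again using (\ref{max})--(\ref{min}) and the triangle inequality.

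Combining these, $\d (s_{i-1}, s_{i+1}) \ge \max\{\d (s_{i-1},s_i), \d (s_i, s_{i+1})\} + 18\delta + 1$ at every interior $s_i$, so Lemma~\ref{GH} gives $\d (y, s_n) \ge n$. Since $\d ((gh)^{k+1} y, s_{2k+2}) = \d (ghy, gy) = \d (hy, y) \le C$, we obtain $\d (y, (gh)^n y) \ge 2n - C$ for all $n$, and this linear growth of the orbit of $y$ forces $gh$ to be loxodromic.

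The subtle point is the choice of sequence: the direct candidate $s_n = (gh)^n y$ fails the three-point condition by exactly one factor of $C$, because it forces the jumps to be estimated through $\d (y, ghy)$ rather than $\d (y, gy)$, losing an additional $\d (y, hy) \le C$ in the triangle inequality. The interleaved sequence above is designed so that the quantities controlled by (\ref{min}) appear as jumps themselves, which is precisely what the $2C$ margin in the hypothesis is set up to allow.
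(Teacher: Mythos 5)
Your proof is correct and follows essentially the same route as the paper: both apply Lemma~\ref{GH} to the interleaved sequence of translates $(gh)^k x$ and $(gh)^k gy$, with the same triangle-inequality estimates showing that consecutive distances are at most $\d(x,y)+C$ while the jumps are at least $\d(x,y)+C+18\delta+1$. The only cosmetic differences are that you prepend $y$ to the sequence and deduce loxodromicity from the orbit of $y$ rather than of $x$.
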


\begin{proof}
Consider the sequence of points $x_0, y_0, x_1, y_1,  \ldots $,  where $x_i=(gh)^i x$ and $y_i=(gh)^igy$. We will show that
\begin{equation}\label{dx0xi}
\d(x, (gh)^ix)=\d (x_0, x_i)\ge i
\end{equation}
for every $i\in \mathbb N$ by applying Lemma \ref{GH}. Obviously (\ref{dx0xi}) implies that $gh$ is loxodromic.

To apply Lemma \ref{GH} we have to verify that
\begin{equation}\label{GH1}
\d (x_{i}, x_{i+1})\ge \max \{ \d (x_{i}, y_i), \d (x_{i+1}, y_i)\} +18\delta +1
\end{equation}
and
\begin{equation}\label{GH2}
\d (y_{i}, y_{i+1})\ge \max \{ \d (y_{i}, x_{i+1}), \d (y_{i+1}, x_{i+1})\} +18\delta +1
\end{equation}
for all $i\ge 0$. Using (\ref{max}) and (\ref{min}), we obtain
\begin{equation}\label{GH11}
\d (x_{i}, x_{i+1}) = \d (x, ghx)
 \ge  \d (gx, ghx)-\d (gx, x)  =\d (x,hx)-\d(gx,x)\ge \d (x,y) + C +18\delta +1.
\end{equation}
On the other hand,
\begin{equation}\label{GH12}
\d (x_{i}, y_i) =  \d (x, gy) \le \d (x, gx)+\d (gx, gy)
 \le   C +\d (x,y).
\end{equation}
and similarly
\begin{equation}\label{GH13}
\d (x_{i+1}, y_i) =\d (hx,y)=\d (x, h^{-1}y) \le \d(x,y) +\d (y, h^{-1}y) \le \d(x,y) +C.
\end{equation}

Obviously (\ref{GH11})--(\ref{GH13}) imply (\ref{GH1}). The proof of (\ref{GH2}) is analogous.
\end{proof}

\begin{lem}\label{strongacyl}
Let $G$ be a group acting acylindrically on a hyperbolic space $(S, \d)$. Then for any $\e >0$ there exist $R, N>0$ such that the following holds. For any $x,y\in S$ such that $\d (x,y)\ge R$, the set of elements $g\in G$ satisfying
\begin{equation}\label{gxgy}
\d (x, gx)\le \e \;\;\; {\rm and} \;\;\; \d (y, gy)\le \d (x,y) +\e
\end{equation}
has cardinality at most $N$.
\end{lem}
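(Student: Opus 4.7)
The strategy is to use hyperbolicity to produce a single intermediate point $u$ on a geodesic $[x,y]$, depending only on $x$ and $y$ (not on $g$), which every $g$ satisfying (\ref{gxgy}) must also move by a universally bounded amount, and then to invoke ordinary acylindricity on the pair $(x,u)$.

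Suppose $S$ is $\delta$-hyperbolic, write $L = \d (x,y)$, and assume $g$ satisfies (\ref{gxgy}). Since $g$ acts isometrically, $\d (gx, gy) = L$, so the triangle inequality together with $\d (x,gx) \le \e$ gives $L - \e \le \d (x, gy) \le L + \e$. A direct computation then yields
$$
(y, gy)_x \ge L/2 - \e \qquad \text{and} \qquad (x, gx)_{gy} \ge L - \e .
$$
Fix a geodesic $[x,y]$ and let $u$ be the point on it with $\d (x, u) = L/2 - \e$; assume $L \ge 4\e$ so that $\d (y, u) = L/2 + \e \le L - \e$. Choose any geodesic $[x, gy]$ and let $[gy, x]$ be its reverse. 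Applying part (b) of Lemma \ref{GHs23} to the first Gromov product estimate produces a point $v \in [x, gy]$ with $\d (x, v) = \d (x, u)$ and $\d (u, v) \le 4\delta$. Since $gu$ lies on the geodesic $g[x, y] = [gx, gy]$ at distance $L/2 + \e$ from $gy$, applying the same lemma to the second estimate (with basepoint $gy$) yields $v' \in [gy, x]$ with $\d (gy, v') = L/2 + \e$ and $\d (gu, v') \le 4\delta$. Because $v$ and $v'$ both lie on the common arc $[x, gy]$, we have $\d (v, v') = |\d (x, v) - \d (x, v')| \le \e$. Combining,
$$
\d (u, gu) \le \d (u, v) + \d (v, v') + \d (v', gu) \le 8\delta + \e .
$$

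Now let $R_0, N_0$ be the constants furnished by acylindricity for the input parameter $8\delta + \e$, and set $R = \max \{ 4\e , 2R_0 + 2\e \}$ and $N = N_0$. For any $x, y$ with $\d (x, y) \ge R$ we have $\d (x, u) \ge R_0$, and every $g$ satisfying (\ref{gxgy}) satisfies $\max \{ \d (x, gx), \d (u, gu)\} \le 8\delta + \e$, so the number of such $g$ is at most $N_0$. The main obstacle in this plan is the second Gromov product estimate: one must notice that the hypothesis $\d (y, gy) \le L + \e$, although only a very weak quantitative constraint by itself, together with $\d (x, gx) \le \e$ forces $[gy, x]$ and $[gy, gx]$ to fellow-travel along nearly their entire length, which is precisely what transfers acylindrical control from $x$ outward to the midpoint-like $u$.
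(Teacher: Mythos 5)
Your argument is correct and follows essentially the same route as the paper's proof: derive the two Gromov product estimates $(y,gy)_x\gtrsim L/2$ and $(x,gx)_{gy}\gtrsim L$ from the hypotheses, apply Lemma \ref{GHs23}(b) twice to show that a fixed intermediate point on $[x,y]$ is displaced by at most a constant, and then invoke acylindricity for the pair consisting of $x$ and that point. The only (harmless) differences are bookkeeping: you place the intermediate point near the midpoint and compare two points on the diagonal $[x,gy]$, obtaining the slightly sharper bound $8\delta+\e$, whereas the paper places it at distance $R_0$ from $x$ and routes the comparison through a point on $g[x,y]$, getting $16\delta+\e$.
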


\begin{proof}
Fix $\e>0$. Let $S$ be $\delta$-hyperbolic. By acylindricity there exist $R_0, N$ such that for every $u,v\in S$ with $\d (u,v)\ge R_0$, we have
\begin{equation}\label{uv}
| \{ g\in G\mid \max\{ \d(u, gu), \d(v,gv)\} <16\delta + \e \} | \le N.
\end{equation}
Without loss of generality we can assume that
\begin{equation}\label{Red}
R_0\ge 2\e .
\end{equation}
Let $R=3R_0$ and let $x,y\in S$ satisfy
\begin{equation}\label{dxy}
\d (x,y)\ge R\ge 3R_0.
\end{equation}
Let $p$ be a geodesic in $S$ connecting $x$ to $y$, and let $m$ be the point on $p$ such that $\d(x,m)=R_0$ (see Fig. \ref{fig02}). Further let   $g\in G$ be any element satisfying (\ref{gxgy}). It suffices to show that
\begin{equation}\label{gm}
\d (m, gm)\le 16\delta +\e.
\end{equation}
Indeed then (\ref{uv}) applied to $u=x$ and $v=m$ completes the proof.

\begin{figure}
  % Requires \usepackage{graphicx}
  \centering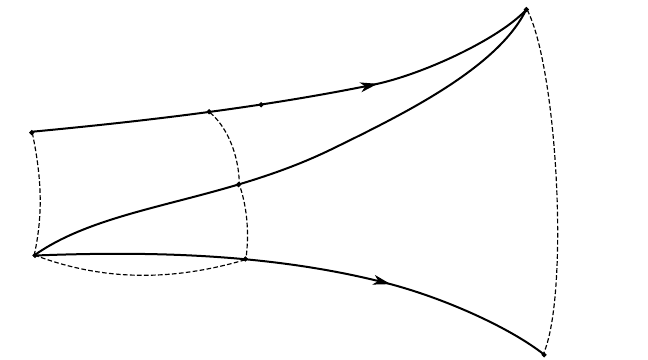
  \caption{}\label{fig02}
\end{figure}

Note first that
\begin{equation}\label{abs}
|\d (x, gy) -\d(x,y)|= |\d (x,gy)-\d (gx,gy)|\le \d (x,gx)\le \e.
\end{equation}
Using this and the the second inequality in (\ref{gxgy}) we obtain $(y, gy)_x \ge \frac12 \d(x,y) - \e$. Further applying (\ref{dxy}) and (\ref{Red}) we obtain $(y, gy)_x \ge \frac32R_0 - \e\ge R_0$. Let $n$ be the point on $[x, gy]$ such that $\d(x,n)=\d(x,m)=R_0$. Then by Lemma \ref{GHs23} (b) we obtain $\d (m,n)\le 4\delta $.

Further let $k$ be the point on $gp$ such that $\d (gy, k)=\d(gy, n)$. As in the previous paragraph, using (\ref{abs}) and then (\ref{dxy}) and (\ref{Red}) we obtain $$(x,gx)_{gy} \ge \d (x,gy)-\e > \d(x,gy) - R_0= \d (gy, n)$$ and hence $\d(n,k)\le 4\delta $.

Thus
\begin{equation}\label{dmk}
\d (m,k)\le \d(m,n)+\d(n,k)\le 8\delta .
\end{equation}
Since $gx$, $gm$, and $k$ belong to the same geodesic, we have
$$
\d(gm, k)\le |\d (gx, gm)-\d (gx, k)|= |\d (x, m)-\d (gx, k)| \le \d (x,gx) +\d (m,k)\le \e+8\delta.
$$
Finally, from the last inequality and (\ref{dmk}) we obtain (\ref{gm}).
\end{proof}

\begin{proof}[Proof of Theorem \ref{class}]
Let us first prove that if $G$ has unbounded orbits, then $G$ contains loxodromic elements. If the action of $G$ is uniformly proper, this follows from Lemma \ref{IO}. Hence we can assume that there exists $C>0$ such that quasi-stabilizers $$A(x)=\{ g\in G\mid \d (x,gx)\le C\}$$ can be arbitrary large. Let $\e = C +18\delta +1$, and let $R=R(\e)$, $N=N(\e )$ be the constants provided by Lemma \ref{strongacyl}. Let $x\in S$ be a point such that $|A(x)|\ge  2N+1$.

Since orbits of $G$ are unbounded, there exists $t\in G$ such that $\d (x, tx) \ge R$. Let $y=tx$. Obviously for every $a\in A(x)$, the element $tat^{-1}$ satisfies
$$
\d (y, tat^{-1}y)=\d (x, ax)\le C<\e.
$$
Let $B_1$ (respectively, $B_2$) be the subset of $A(x)$ consisting of all $a\in A(x)$ such that $\d (y, ay) < \d (x,y) +\e$ (respectively, $\d (x, tat^{-1}x)< \d(x,y) +\e$). By Lemma \ref{strongacyl}, $|B_1|\le N$ and $|B_2|\le N$. Since $|A(x)|\ge  2N+1$, there exists $a\in A(x)$ such that $$\d (y, ay) \ge  \d (x,y) +\e= \d(x, y) +C+18\delta +1$$ and $$\d (x, tat^{-1}x)\ge  \d(x,y) +\e= \d(x, y) +C+18\delta +1.$$ Now we can apply Lemma \ref{prel} to elements $g=a$ and $h=tat^{-1}$. We obtain that $gh$ is loxodromic.

Thus if no element of $G$ is loxodromic, $G$ is of type (a). Suppose now that $G$ contains a loxodromic element $g$. Then it is either of type (b) or of type (c) by Lemma \ref{wpd-el}.
\end{proof}

%%%%%%%%%%%%%%%%%%%%%%%%%%%%%%%%%%%%%%%%%%%%%%%%%%%%%%%%%%%%%%%%

\section{Separating cosets of hyperbolically embedded subgroups}

%%%%%%%%%%%%%%%%%%%%%%%%%%%%%%%%%%%%%%%%%%%%%%%%%%%%%%%%%%%%%%%%

Our next goal is to review and generalize some technical tools from \cite{HO}, which will be used in the proof of Theorem \ref{main}. Throughout this section, we use the notation introduced in Section 2.4. We fix a group $G$ with a hyperbolically embedded collection of subgroups $\Hl\h (G,X)$.

All results discussed in this section can be easily extracted from \cite[Section 3]{HO}, although our settings are slightly different there. In particular, in \cite{HO} only separating cosets of a single subgroup $H_\lambda $ were considered. However in the proof of Theorem \ref{main} we will have to deal with separating cosets of all subgroups from the collection $\Hl$ simultaneously. Another (rather minor) difference occurs in the definition of a separating coset, see Remark \ref{remscd}. Since results of this section are crucial for our paper, we write down complete proofs whenever proofs from \cite{HO} do not work verbatim.

\begin{defn}\label{comp}
Let $q$ be a path in the Cayley graph $\G $. A (non-trivial) subpath $p$ of $q$ is called an \emph{$H_\lambda $-subpath}, if the label of $p$ is a word in the alphabet $H_\lambda   $. An $H_\lambda$-subpath $p$ of $q$ is an {\it $H_\lambda $-component} if $p$ is not contained in a longer $H_\lambda$-subpath of $q$; if $q$ is a loop, we require in addition that $p$ is not contained in any longer $H_\lambda $-subpath of a cyclic shift of $q$. Further by a {\it component} of $q$ we mean an $H_\lambda $-component of $q$ for some $\lambda \in \Lambda$.

Two $H_\lambda $-components $p_1, p_2$ of a path $q$ in $\G $ are called {\it connected} if there exists a
path $c$ in $\G $ that connects some vertex of $p_1$ to some vertex of $p_2$, and ${\Lab (c)}$ is a word consisting only of letters from $H_\lambda   $. In algebraic terms this means that all vertices of $p_1$ and $p_2$ belong to the same left coset of $H_\lambda $. Note also that we can always assume that $c$ is an edge as every element of $H_\lambda $ is included in the set of generators. A component of a path $p$ is called \emph{isolated} in $p$ if it is not connected to any other component of $p$.

It is convenient to extend the relative metric $\dl $ defined in Section 2.4 to the whole group $G$ by assuming $$\dl (f,g)\colon =\left\{\begin{array}{ll}\dl (f^{-1}g,1),& {\rm if}\;  f^{-1}g\in H_\lambda \\ \dl (f,g)=\infty , &{\rm  otherwise.}\end{array}\right.$$
\end{defn}

The following result is a simplified version of \cite[Proposition 4.13]{DGO}. Recall that a path $p$ in a metric space is $(\mu, b)$-quasi-geodesic for some $\mu \ge 1$ and $b\ge 0$ if for every subpath $q$ of $p$ one has $$ \ell(q)\le \mu \d (q_-, q_+)+b.$$

\begin{lem}\label{C}
For every $\mu\ge 1$ and $b\ge 0$, there exists a constant $C=C(\mu,b)>0$ such that for any $n$-gon $p$ with $(\mu, b)$-quasi-geodesic sides in $\G$, any $\lambda \in \Lambda$, and any isolated $H_\lambda$-component $a$ of $p$, we have $\dl (a_-, a_+)\le Cn$.
\end{lem}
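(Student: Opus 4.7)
My plan is to prove Lemma \ref{C} by induction on $n$, extending the argument of \cite[Proposition 4.13]{DGO} to the present setting of multiple hyperbolically embedded subgroups and to quasi-geodesic sides. The essential tool is the hyperbolicity of $\G$, used both in the base case via $\delta$-thinness and in the inductive step to produce geodesic diagonals.

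First, I would reduce to the case of geodesic $n$-gons using the Morse property for quasi-geodesics: every $(\mu, b)$-quasi-geodesic in the $\delta$-hyperbolic space $\G$ lies within Hausdorff distance $K = K(\mu, b, \delta)$ of any geodesic joining its endpoints. Replacing each side of $p$ by such a geodesic yields a geodesic $n$-gon $p'$ with the same vertices, and the endpoints of the isolated component $a$ lie within $K$ of $p'$. This transfers the problem to $p'$ at the cost of an additive constant depending only on $\mu, b$ (through $K$), which can be absorbed into the final $C$.

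For the geodesic base case $n \le 3$, I would use $\delta$-thinness directly. In a geodesic triangle with isolated $H_\lambda$-component $a$ on one side, the endpoints $a_\pm$ lie within $\delta$ of the union of the other two sides. Picking nearby vertices and connecting them by a short geodesic segment in $\G$ produces a candidate path of length $O(\delta)$ from $a_-$ to $a_+$. Admissibility follows from the isolation of $a$: the other sides carry no $H_\lambda$-edges in the coset $a_- H_\lambda$, and the short connectors can be chosen to avoid this coset as well, since otherwise one could extend $a$ and contradict isolation.

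For $n \ge 4$, I would cut off a triangle from the $n$-gon along a geodesic diagonal $d$ in $\G$, producing a triangle $T$ and an $(n-1)$-gon $P$ chosen so that $P$ contains $a$. If $d$ carries no $H_\lambda$-component in the coset $a_- H_\lambda$, then $a$ remains isolated in $P$ and the inductive hypothesis yields $\dl(a_-, a_+) \le C(n-1)$. Otherwise $d$ carries such a component $a'$; since $a$ was isolated in $p$, no other side of $T$ contributes a component in this coset, so $a'$ is isolated in $T$ and the base case bounds $\dl(a'_-, a'_+)$ by a constant. Treating $a \cup a'$ as a single extended isolated component of a modified $(n-1)$-gon and applying the triangle inequality for $\dl$, I would obtain $\dl(a_-, a_+) \le C(n-1) + O(1) \le Cn$ after adjusting $C$. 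The main obstacle I anticipate is precisely this case analysis around isolation: each inductive cut either preserves isolation of $a$ or forces a merger with a new component of the diagonal, and it is the careful handling of the merger case (and the bookkeeping that keeps the added constant uniform in $\lambda$ and in the specific polygon) where the linear dependence on $n$ in the bound arises.
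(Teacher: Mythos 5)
The first thing to say is that the paper does not prove Lemma \ref{C} at all: it is quoted as a simplified version of \cite[Proposition 4.13]{DGO}, so your proposal has to be measured against the argument given there, which is considerably more delicate than your sketch (it rests on Olshanskii's lemma about quasi-geodesic polygons in hyperbolic spaces and a careful induction). Your overall shape --- induct on $n$ by cutting along geodesic diagonals, and observe that isolation of $a$ forces the \emph{sides of $p$} to carry no $H_\lambda$-edges in the coset $a_-H_\lambda$ --- is reasonable, but the proof fails where all the content of the lemma sits, namely in showing that the paths you build are \emph{admissible}. Recall that $\dl(a_-,a_+)$ is the length of a shortest path from $a_-$ to $a_+$ avoiding every edge of the coset subgraph $a_-\Gamma(H_\lambda,H_\lambda)$; it is in no way controlled by $\dxh(a_-,a_+)$, which is trivially at most $1$. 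So exhibiting a path of length $O(\delta)$ from $a_-$ to $a_+$ proves nothing unless that path is admissible, and a single $H_\lambda$-edge lying inside the coset $a_-H_\lambda$ is fatal: its endpoints can be at arbitrarily large $\dl$-distance --- this is precisely what local finiteness of $(H_\lambda,\dl)$ is about, and precisely why the lemma is non-trivial. Your justification, ``the short connectors can be chosen to avoid this coset as well, since otherwise one could extend $a$ and contradict isolation,'' is not valid: isolation of $a$ is a statement about the components of the sides of $p$ only, and an auxiliary geodesic from $a_-$ to a nearby point of another side may perfectly well begin with an $H_\lambda$-edge into $a_-H_\lambda$ without extending $a$ or connecting it to any component of $p$. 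No contradiction arises, and there is no generic way to reroute around such an edge at bounded $\dl$-cost.

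The same objection undermines the other two stages. In the Morse-lemma reduction, knowing that $a_\pm$ lie within $K$ of the geodesic polygon $p'$ in the metric of $\G$ transfers no information about $\dl$: the new side need not even have an isolated component in the coset $a_-H_\lambda$, and the connecting segments of length $\le K$ face the admissibility problem above. In the merger step, $a$ and $a'$ sit on different sides of the $(n-1)$-gon, so they cannot literally be ``treated as a single extended component''; the triangle inequality only reduces you to bounding $\dl(a_-,a'_\mp)$ and $\dl(a'_\pm,a_+)$, which are distances between endpoints of two distinct connected components, and bounding these requires a further surgery (cutting the polygon along new $H_\lambda$-edges joining those points, then re-verifying isolation and tracking the number of sides of the resulting pieces). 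That surgery, together with a base case that genuinely produces admissible paths, is where the linear bound $Cn$ actually comes from, and neither is carried out in your proposal. As written, the argument only bounds the $\G$-distance between $a_-$ and $a_+$, which is not the statement of the lemma.
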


From now on, we fix a constant $C=C(1,0)$ provided by Lemma \ref{C}. We also fix any constant
\begin{equation}\label{DC}
D\ge 3C.
\end{equation}
In fact, for the purpose of proving Theorem \ref{main} we can just take $D=3C$ as in \cite{HO}. However the more general assumption (\ref{DC}) can be useful for future applications (see for example Proposition \ref{bgen}) and does not cause any complications.

\begin{defn}\label{sepcosdef}
We say that a path $p$ in $\G $ penetrates a coset $xH_\lambda$ for some $\lambda\in \Lambda $ if $p$ decomposes as $p_1ap_2$, where $p_1, p_2$ are possibly trivial, $(a)_-\in xH_\lambda$, and $a$ is an $H_\lambda$-component of $p$. Note that if $p$ is geodesic, it penetrates every coset of $H_\lambda$ at most once. In this case we call $a$ the \emph{component of $p$ corresponding to $xH_\lambda$}. If, in addition, $\dl (a_-, a_+)> D$, then we say that $p$ \emph{essentially penetrates} the coset $xH_\lambda $

If some $f,g\in G$ are joined by a geodesic path $p$ in $\G$  that essentially penetrates $xH_\lambda $, we call $xH_\lambda $ an \emph{$(f,g;D)$-separating coset}.
The set of all $(f,g;D)$-separating cosets of subgroups from the collection $\Hl$ is denoted by $S(f,g;D)$. Note that $S(f,g;D)=S(g,f;D)$
\end{defn}

\begin{rem}\label{remscd}
If $D=3C$, then our notion of an $(f,g;D)$-separating coset almost coincides with the notion of an $(f,g)$-separating coset from \cite{HO} (see Definition 3.1 there). The difference is that a coset $xH_\lambda$ was also called separating in \cite{HO} whenever $f,g\in xH_\lambda $ and $f\ne g$. This variant of the definition was justified by some technical reasons specific to \cite{HO} and would cause unnecessary complications in this paper.
\end{rem}

The lemma below follows immediately from Lemma \ref{C}.

\begin{lem}[cf. {\cite[Lemma 3.3 (a)]{HO}}]\label{sep}
For any $f,g\in G$ and any $xH_\lambda\in S(f,g;D)$, every path in $\G$ connecting $f$ to $g$ and composed of at most $2$ geodesic segments penetrates $xH_\lambda$. Moreover, if $D\ge 3C(\mu,b)$ for some $\mu\ge 1$, $b\ge 0$, where $C(\mu,b)$ is the constant provided by Lemma \ref{C}, then every path in $\G$ connecting $f$ to $g$ and composed of at most $2$ $(\mu,b)$-quasi-geodesic segments penetrates $xH_\lambda$.
\end{lem}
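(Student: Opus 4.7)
The plan is a proof by contradiction. Suppose $q$ is a path from $f$ to $g$ composed of at most two geodesic segments (or $(\mu,b)$-quasi-geodesic segments, in the ``moreover'' clause) which does not penetrate $xH_\lambda$. Unpacking the hypothesis $xH_\lambda \in S(f,g;D)$, there is a geodesic $p$ in $\G $ from $f$ to $g$ that decomposes as $p = p_1 a p_2$, where $a$ is the $H_\lambda$-component of $p$ corresponding to $xH_\lambda$ and $\dl (a_-,a_+) > D$. I would then form the cyclic loop $P = p\cdot q^{-1}$ and view it as an $n$-gon whose sides are $p$ itself (a single geodesic, hence a single $(\mu,b)$-quasi-geodesic side) together with the at most two (quasi-)geodesic pieces of $q$. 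Thus $n\le 3$ in either version of the claim.

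The key step is to verify that $a$ is an \emph{isolated} $H_\lambda$-component of $P$. Maximality of $a$ inside $P$ is inherited from maximality inside $p$ at every interior vertex of $a$; in the boundary case where $p_1$ or $p_2$ is trivial, $a$ abuts $q^{-1}$ at a vertex that already lies in $xH_\lambda$, and any $H_\lambda$-edge of $q$ incident to that vertex would force $q$ to penetrate $xH_\lambda$, contrary to assumption. For isolation, suppose $b\ne a$ were another $H_\lambda$-component of $P$ connected to $a$, so that the vertices of $b$ also lie in $xH_\lambda$. If $b$ is contained in $p$, then $p$ would admit a strict shortening by replacing the subpath of $p$ between appropriate endpoints of $a$ and $b$ (which has length at least $2$ since, by distinctness of the components, it contains at least one non-$H_\lambda$ edge) with the single $H_\lambda$-edge joining them inside the coset $xH_\lambda$, contradicting the fact that $p$ is geodesic. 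If $b$ lies in $q^{-1}$, then $q$ has an $H_\lambda$-component in $xH_\lambda$ and hence penetrates $xH_\lambda$, again contradicting our assumption.

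With isolation established, Lemma \ref{C} applied to $P$ yields $\dl (a_-,a_+)\le Cn\le 3C\le D$ in the first statement (using $C=C(1,0)$), and $\dl (a_-,a_+)\le 3C(\mu,b)\le D$ in the ``moreover'' statement (noting that the geodesic $p$ is a fortiori $(\mu,b)$-quasi-geodesic for any $\mu\ge 1$, $b\ge 0$). Either conclusion contradicts the defining inequality $\dl (a_-,a_+) > D$ for essential penetration, forcing $q$ to penetrate $xH_\lambda$. The only substantive point requiring care is the bookkeeping at the seams of $P$ needed to justify isolation; once that is handled, the lemma reduces to a single application of Lemma \ref{C} to a triangle or bigon.
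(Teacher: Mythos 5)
Your proof is correct and is exactly the argument the paper has in mind: the paper gives no details beyond asserting that the lemma ``follows immediately from Lemma \ref{C}'', and your write-up (forming the $\le 3$-gon $pq^{-1}$, checking that the component $a$ corresponding to $xH_\lambda$ is isolated in it, and applying Lemma \ref{C} to contradict $\dl(a_-,a_+)>D$) is the standard unpacking of that assertion. The isolation bookkeeping you flag is handled correctly, including the observation that a connected component lying in $q$ would force $q$ to penetrate $xH_\lambda$ and one lying in $p$ would contradict geodesicity of $p$.
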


The next result is also fairly elementary.

\begin{lem}[{\cite[Lemma 3.5]{HO}}] \label{dist}
Suppose that a geodesic $p$ in $\G$ penetrates a coset $xH_\lambda$ for some $\lambda \in \Lambda$. Let $p=p_1ap_2$, where $a$ is a component of $p$ corresponding to $xH_\lambda$. Then $\ell (p_1)=\dxh (p_-, xH_\lambda)$.
\end{lem}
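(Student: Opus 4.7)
The plan is to prove both inequalities, one of which is immediate and the other of which reduces to a shortcut argument.

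First, one direction is free: since $p_1$ is a path in $\G$ from $f$ to $a_-$, and $a_-\in xH_\lambda$, we immediately get $\ell(p_1)\ge \dxh(f,xH_\lambda)$. So the content is the reverse inequality, which I would prove by contradiction.

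Before doing that, I would record a preliminary observation: because $p$ is a geodesic, the component $a$ must consist of a \emph{single} edge of $\G$. Indeed, if $a$ contained two consecutive edges labeled by $h_1,h_2\in H_\lambda$, then either $h_1h_2=1$, in which case $p$ backtracks and is not geodesic, or $h_1h_2$ is a nontrivial element of $H_\lambda$, i.e., a single letter of $\mathcal H$, giving a single-edge replacement that shortens $p$. This observation is what makes the surgery in the main argument cheap.

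Now suppose for contradiction that $\dxh(f,xH_\lambda)<\ell(p_1)$, realized by some path $q$ in $\G$ from $f$ to a vertex $y\in xH_\lambda$ with $\ell(q)<\ell(p_1)$. Since $y$ and $a_+$ both lie in $xH_\lambda$, the element $y^{-1}a_+$ lies in $H_\lambda$ and hence is a single letter of $\mathcal H$; let $e$ be the corresponding edge of $\G$ from $y$ to $a_+$. Then the concatenation $q\, e\, p_2$ is a path from $f$ to $g=p_+$ of length $\ell(q)+1+\ell(p_2)<\ell(p_1)+1+\ell(p_2)=\ell(p)$, contradicting the assumption that $p$ is geodesic. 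This yields $\ell(p_1)\le \dxh(f,xH_\lambda)$, completing the proof.

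The only potential subtlety is the claim that $a$ is a single edge; beyond that the argument is a direct shortcut construction, so I do not expect any serious obstacle.
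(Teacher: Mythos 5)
Your argument is correct: the lower bound is immediate, the single-edge observation for components of geodesics is valid (the paper itself uses it, e.g.\ in the proof of Lemma 4.10), and the shortcut $q\,e\,p_2$ gives the reverse inequality (the degenerate case $y=a_+$ is handled either by dropping $e$ or by the loop labelled $1\in H_\lambda$ noted in Remark 2.5). The paper does not reprove this lemma --- it is quoted from \cite[Lemma 3.5]{HO} as ``fairly elementary'' --- and your proof is exactly the intended elementary argument.
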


We now state the analogue of Definition 3.6 from \cite{HO}.

\begin{defn}\label{order}.
Given any $f,g\in G$, we define a relation $\preceq$ on the set $S(f,g;D)$ as follows: for any $C_1,C_2\in S(f,g;D)$,
$$C_1 \preceq C_2\;\;\; {\rm  iff}\;\;\; \dxh (f, C_1 )\le \dxh (f, C_2 ).$$
\end{defn}

As in \cite{HO}, from Lemma \ref{sep} and Lemma \ref{dist} we immediately obtain the following.

\begin{lem}\label{sep-ref}
For any $f,g\in G$, $\preceq$ is a linear order on $S(f,g;D)$ and every geodesic $p$ in $\G$ going from $f$ to $g$ penetrates all $(f,g;D)$-separating cosets according to the order $\preceq$. That is, $$S(f,g; D)=\{ C_1\preceq C_2\preceq \ldots \preceq C_n\}$$ for some $n\in \mathbb N$ and $p$ decomposes as $$p=p_1a_1\cdots p_na_np_{n+1},$$ where $a_i$ is the component of $p$ corresponding to $C_i$, $i=1, \ldots , n$ (see Fig. \ref{fig1}).
\end{lem}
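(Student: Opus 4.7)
The plan is to derive both assertions of the lemma from Lemmas~\ref{sep} and \ref{dist}, with the only non-trivial point being antisymmetry of $\preceq$ (linearity being automatic, since $\preceq$ is pulled back from the usual order on $\mathbb Z_{\ge 0}$ via the function $C\mapsto \dxh(f,C)$).

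First I would fix a geodesic $p$ from $f$ to $g$ in $\G$ and use Lemma~\ref{sep} to conclude that $p$ penetrates every coset $C\in S(f,g;D)$. Since $p$ is geodesic, each such $C$ is penetrated exactly once, yielding, for each $C\in S(f,g;D)$, a unique component $a(C)$ of $p$, and a decomposition $p=p(C)\,a(C)\,p'(C)$. By Lemma~\ref{dist},
\[
\ell(p(C))=\dxh(f,C).
\]
In particular, the function $C\mapsto \ell(p(C))$ is precisely the function defining $\preceq$, so the order in which the components $a(C)$ occur along $p$ agrees with $\preceq$ as soon as the map $C\mapsto a(C)$ is injective.

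For injectivity (and hence antisymmetry of $\preceq$), suppose $C_1=xH_\lambda$ and $C_2=yH_\mu$ both lie in $S(f,g;D)$ and satisfy $\dxh(f,C_1)=\dxh(f,C_2)$. By the above, the components $a(C_1)$ and $a(C_2)$ of $p$ share their initial vertex. Since every element of $\mathcal H$ belongs to the generating alphabet, each component of a geodesic is a single edge, and a single edge of $\G$ is an $H_\nu$-component for exactly one $\nu\in\Lambda$ (the one whose copy of $\mathcal H$ contains the label). Hence $a(C_1)$ and $a(C_2)$ are the same edge and, in particular, $\lambda=\mu$ and the initial vertex of this common edge lies in $xH_\lambda\cap yH_\lambda$, giving $C_1=C_2$.

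Combining these two steps, $\preceq$ is a linear order on $S(f,g;D)$, and enumerating $S(f,g;D)=\{C_1\prec\cdots\prec C_n\}$, the geodesic $p$ decomposes as $p=p_1a_1\cdots p_na_np_{n+1}$ with $a_i=a(C_i)$, as claimed. I do not foresee a genuine obstacle here; the only subtlety is the alphabet-bookkeeping argument above, which relies on the convention (Remark~\ref{disj}) that the union in (\ref{calH}) is disjoint, so that a single edge label determines a unique $\lambda$.
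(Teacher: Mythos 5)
Your proposal is correct and follows the same route as the paper, which simply states that the lemma is immediate from Lemmas~\ref{sep} and \ref{dist}; you have filled in the details the paper leaves implicit. Your treatment of antisymmetry (components of a geodesic are single edges, the disjointness convention of Remark~\ref{disj} pins down $\lambda$, and two components of a geodesic with the same initial vertex coincide) is exactly the bookkeeping needed and is sound.
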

\begin{figure}
  % Requires \usepackage{graphicx}
 \centering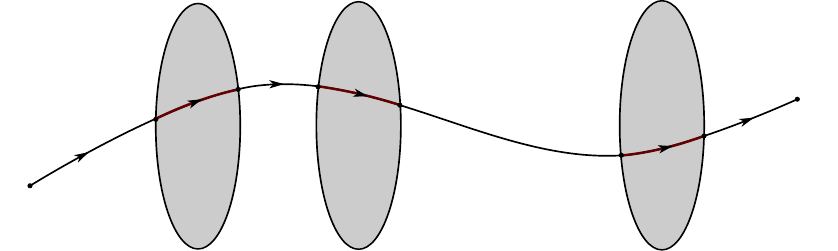\\
  \caption{}\label{fig1}
\end{figure}

The next result is a generalization of a simplification of \cite[Lemma 3.9]{HO}.

\begin{lem}\label{HO}
For any $f,g,h\in G$, the set $S(f,g;D)$ can be decomposed as $$S(f,g;D)= S^\prime\sqcup S^{\prime\prime}\sqcup F,$$ where $S^\prime\subseteq S(f,h;D)$, $S^{\prime\prime}\subseteq S(g,h; D)$, and $|F|\le 2$. In particular, we have
$$
\begin{array}{rcl}
|S(f,g; D)| &\le& |S(f,h; D) | + |S(g,h; D)| + 2 .%\\&&\\ &\le& |S(f,h; D) | + |S(g,h; D)| + 2 .
\end{array}
$$
\end{lem}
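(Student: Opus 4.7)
The plan is to use the geodesic triangle with vertices $f,g,h$ together with the isolated-components machinery of Lemma~\ref{C}. Fix geodesics $p\colon f\to g$, $q\colon f\to h$, $r\colon g\to h$, forming the geodesic $3$-gon $p\cdot r^{-1}\cdot q^{-1}$ in $\G$. For each $C\in S(f,g;D)$ let $\lambda(C)$ be the index with $C$ a coset of $H_{\lambda(C)}$, and let $a$ be the unique $H_{\lambda(C)}$-component of $p$ lying in $C$; by hypothesis $\widehat d_{\lambda(C)}(a_-,a_+)>D\ge 3C$. Applying Lemma~\ref{C} to this $3$-gon, $a$ cannot be isolated (otherwise $\widehat d_{\lambda(C)}(a_-,a_+)\le 3C\le D$, a contradiction), so $a$ is connected to some $H_{\lambda(C)}$-component in $C$ lying on $q$ or on $r$. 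Split $S(f,g;D)$ into two classes: \emph{Class $q$} consisting of those $C$ for which $a$ is connected to a component $b$ of $q$, and \emph{Class $r$} consisting of those $C$ not in Class $q$, for which $a$ is connected to a component $c$ of $r$.

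For $C\in$~Class~$q$, write $p=p_1 a p_2$, $q=q_1 b q_2$; Lemma~\ref{dist} gives $\ell(p_1)=\ell(q_1)=d_{X\cup\mathcal H}(f,C)$ and $\ell(q_2)=d(b_+,h)=d_{X\cup\mathcal H}(h,C)$. Consider the candidate path $\pi_C=p_1\cdot a\cdot\tau$, where $\tau$ is a geodesic from $a_+$ to $h$; its length is $\ell(p_1)+1+d(a_+,h)$. The triangle inequality forces $d(a_+,h)\ge d(f,h)-d(f,a_+)=\ell(q_2)$, while the single $H_{\lambda(C)}$-edge joining $a_+$ and $b_+$ (both in $C$) yields $d(a_+,h)\le d(b_+,h)+1=\ell(q_2)+1$. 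In the \emph{equality case} $d(a_+,h)=\ell(q_2)$, the path $\pi_C$ has length $\ell(q)=d(f,h)$ and is therefore a geodesic $f\to h$; its $H_{\lambda(C)}$-component in $C$ must be exactly $a$, for any $H_{\lambda(C)}$-edge starting $\tau$ would combine with $a$ into a multi-edge $H_{\lambda(C)}$-subpath, violating the geodesicity of $\pi_C$. Hence $\widehat d_{\lambda(C)}(a_-,a_+)>D$ witnesses $C\in S(f,h;D)$; declare such $C$ to lie in $S'$. The argument symmetric under $(q,f)\leftrightarrow(r,g)$ handles Class~$r$: those $C$ satisfying the analogous equality case are placed in $S''\subseteq S(g,h;D)$.

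All remaining $C\in S(f,g;D)$ fall into the \emph{off-by-one case} $d(a_+,h)=\ell(q_2)+1$ (for Class $q$) or its $(r,g)$-analogue (for Class $r$); collect these into $F$. By construction $S(f,g;D)=S'\sqcup S''\sqcup F$ with $S'\subseteq S(f,h;D)$ and $S''\subseteq S(g,h;D)$. The main obstacle is the bound $|F|\le 2$: the off-by-one condition is geometrically rigid, forcing $b_+$ (respectively its Class-$r$ counterpart) to lie strictly between $a_+$ (respectively $a_-$) and $h$ along some geodesic, i.e., the connecting $H_{\lambda(C)}$-edge points toward $h$. Using the linear order $\preceq$ on $S(f,g;D)$ from Lemma~\ref{sep-ref} together with the monotonicity of entrance and exit distances along $p,q,r$ supplied by Lemma~\ref{dist}, one verifies that at most one coset in Class~$q$ and at most one in Class~$r$ can realize this off-by-one configuration, giving $|F|\le 2$; this final counting step is where the argument requires the most care.
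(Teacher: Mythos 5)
Your strategy is genuinely different from the paper's: you classify each $C\in S(f,g;D)$ by which side of the triangle its $p$-component connects to, and then by a metric equality/defect dichotomy; the paper instead orders $S(f,g;D)=\{C_1\preceq\dots\preceq C_n\}$, takes as pivot the $\preceq$-largest coset $C_i$ penetrated by the $f$--$h$ geodesic, and proves $C_j\in S(f,h;D)$ for $j<i$ and $C_j\in S(g,h;D)$ for $j>i+1$, so that $F\subseteq\{C_i,C_{i+1}\}$. Your equality-case argument is essentially sound: if $\d(a_+,h)=\ell(q_2)$ then $p_1a\tau$ is a geodesic from $f$ to $h$ whose component in $C$ is $a$, hence $C\in S(f,h;D)$ \emph{provided} $\dl(a_-,a_+)>D$. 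But note a real (if repairable) flaw already here: $C\in S(f,g;D)$ only guarantees that \emph{some} geodesic from $f$ to $g$ essentially penetrates $C$; your single fixed $p$ need not, so ``by hypothesis $\dl(a_-,a_+)>D$'' is unjustified. The paper avoids this by choosing, for each coset separately, a geodesic that essentially penetrates it.

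The genuine gap is the bound $|F|\le 2$, which you assert rather than prove, and the monotonicity you invoke points the wrong way. What Lemma \ref{dist} and the triangle inequality actually give is: if $C\prec C'$ are both in Class $q$ and $C'$ is in the equality case, then so is $C$ (travel from $a_+$ along $p$ to $a'_+$, then geodesically to $h$; since $\d(a_+,a'_+)=\dxh(f,C')-\dxh(f,C)$, this path has length $\ell(q_2)$). So the off-by-one cosets form a \emph{final} segment of Class $q$ with respect to $\preceq$: this clusters them at the end but in no way bounds their number by one. To get $|F\cap(\text{Class }q)|\le 1$ you must show that whenever $C\prec C'$ both lie in Class $q$, the coset $C$ is in the equality case, i.e.\ that $a_+$ lies on \emph{some} geodesic from $f$ to $h$. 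The only way to produce such a geodesic is the paper's splicing construction: write $p=p_1ap_2a'p_3$ and $q=q_1b'q_2'$ with $a'$, $b'$ the components in $C'$, join $(a')_-=(p_2)_+$ to $(b')_+=(q_2')_-$ by an edge $e$ of $C'$, and verify via Lemma \ref{dist} that $p_1ap_2eq_2'$ has length $\ell(q_1)+1+\ell(q_2')=\ell(q)$, hence is a geodesic from $f$ to $h$ passing through $a_+$. Nothing in your proposal performs this step, and Lemma \ref{dist} by itself only locates entrance points along a single geodesic; it says nothing about whether the exit point of $p$ from $C$ lies on any $f$--$h$ geodesic. As written, the final counting claim --- the heart of the lemma --- is unsupported.
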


\begin{proof}
If $|S (f,g; D)|\leq 2$ the statement is trivial, so we can assume $|S (f,g; D)|>2$. We fix any geodesics $q$ and $r$ in $\G$ connecting  $h$ to $g$ and $f$ to $h$, respectively. By Lemma \ref{sep}, every coset from $S(f,g;D)$ is penetrated by at least one of $q$, $r$. Without loss of generality we may assume that at least one of the cosets from $S(f,g;D)$ is penetrated by $r$. Let $$S(f,g;D)=\{ C_1\preceq C_2\preceq \ldots \preceq C_n\}$$ and let $C_i$ be the largest coset (with respect to the order $\preceq$) that is penetrated by $r$.

Assume that $i\ge 2$. For every $1\le j< i $, there exists a geodesic $p$ connecting $f$ to $g$ in $\G$ such that $p$ essentially penetrates $C_j$. Further by Lemma \ref{sep-ref}, $p$ decomposes as $$ p=p_1a_1p_2a_2p_3,$$ where $a_1$, $a_2$ are components of $p$ corresponding to $C_j$ and $C_i$, respectively. Similarly $r$ decomposes as $$r=r_1br_2,$$ where $b$ is a component of $r$ corresponding to $C_i$ (see Fig. \ref{fig2}).

Since $(r_2)_-$ and $(p_2)_+$ belong to the same coset $C_i$ of $H_\lambda$, there exists an edge $e$ in $\G$ going from $(p_2)_+$ to $(r_2)_-$. By Lemma \ref{dist}, we have $\ell (p_1a_1p_2)=\ell (r_1)$.  Hence for the path $t=p_1a_1p_2er_2$, we have
 $$
 \ell(t)= \ell(p_1a_1p_2) +1+\ell(r_2)=\ell (r_1)+1+\ell(r_2)=\ell(r).
 $$
Thus $t$ is also geodesic. Since $p$ essentially penetrates $C_j$, so does $t$. Therefore, $C_j\in S (f,h; D)$ for every $1\le j< i $.

By the choice of $i$, if $i<n$ then the coset $C_{i+1}$ is penetrated by $q$. Arguing as above we obtain that $C_j\in S (g,h; D)$ for every $i+1<j\le n$. It remains to define $$S^\prime = \{ C_j\mid 1\le j<i\},\;\;\; S^{\prime\prime} = \{ C_j\mid i+1<j\le n\},\;\;\; F=S(f,g; D)\setminus (S^\prime \cup S^{\prime\prime}).$$
\end{proof}

\begin{figure}
  % Requires \usepackage{graphicx}
 \centering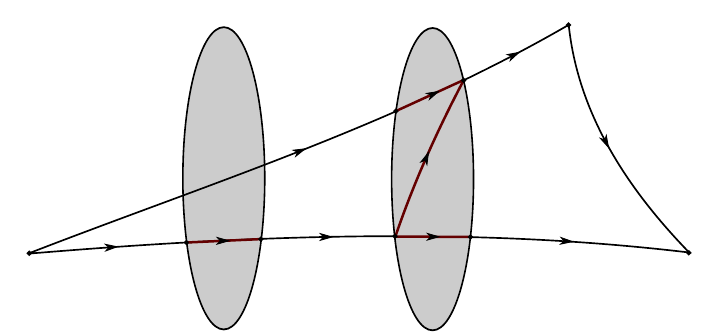\\
  \caption{}\label{fig2}
\end{figure}

%%%%%%%%%%%%%%%%%%%%%%%%%%%%%%%%%%%%%%%%%%%%%%%%%%%%%%%%%%%%%%%%%%%%%

\section{Proof of Theorem \ref{main}}

%%%%%%%%%%%%%%%%%%%%%%%%%%%%%%%%%%%%%%%%%%%%%%%%%%%%%%%%%%%%%%%%%%%%%

In this section we prove Theorem \ref{main}. As we already mentioned in the introduction, we only need to show that (AH$_4$) implies (AH$_1$). In what follows we say that a Cayley graph $\Gamma (G,X)$ is \emph{acylindrical} if the action of $G$ on $\Gamma (G,X)$  is acylindrical. For Cayley graphs, the acylindricity condition can be rewritten as follows: for every $\e>0$ there exist $R>0$ and $N>0$ such that for any $g\in G$ of length $|g|_X\ge R$ we have $$ |\{ f\in G\mid |f|_X\le \e , \; |g^{-1}fg|_X \le \e \} | \le N.$$
Note that it suffices to verify this condition for all sufficiently large $\e $.

The next lemma is obvious.

\begin{lem}\label{acyl}
For any group $G$ and any generating sets $X$ and $Y$ of $G$ such that $$\sup_{x\in X} |x|_Y<\infty\;\;\; {\rm and}\;\;\; \sup_{y\in Y} |y|_X <\infty ,$$ the following hold.
\begin{enumerate}
\item[(a)] $\Gamma (G,X)$ is hyperbolic if and only if $\Gamma (G,Y)$ is hyperbolic.
\item[(b)] $\Gamma (G,X)$ is acylindrical if and only if $\Gamma (G,Y)$ is acylindrical.
\end{enumerate}
\end{lem}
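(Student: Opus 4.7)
The plan is to observe that the hypothesis on $X$ and $Y$ forces the word metrics $\d_X$ and $\d_Y$ on $G$ to be bi-Lipschitz equivalent, and then deduce both (a) and (b) essentially mechanically from this.

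Concretely, set $K=\max\{\sup_{x\in X}|x|_Y,\;\sup_{y\in Y}|y|_X\}$; by hypothesis $K<\infty$. A straightforward induction on word length then gives $|g|_Y\le K|g|_X$ and $|g|_X\le K|g|_Y$ for every $g\in G$, so the identity map $(G,\d_X)\to(G,\d_Y)$ is a $K$-bi-Lipschitz bijection. Since the inclusions $G\hookrightarrow\Gamma(G,X)$ and $G\hookrightarrow\Gamma(G,Y)$ are quasi-isometries of each vertex set with the ambient Cayley graph, it follows that $\Gamma(G,X)$ and $\Gamma(G,Y)$ are quasi-isometric. Part (a) is then immediate from the standard fact that hyperbolicity of geodesic metric spaces is preserved under quasi-isometry.

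For (b), I would argue directly, without invoking any quasi-isometry invariance statement for acylindricity (such a statement is in fact subtle, but is not needed here because our bi-Lipschitz constant is independent of the points involved). Assume $\Gamma(G,X)$ is acylindrical and fix $\varepsilon>0$. Apply the acylindricity condition in $X$ to the parameter $K\varepsilon$, obtaining $R,N>0$ such that for every $g\in G$ with $|g|_X\ge R$,
\[
\bigl|\{f\in G\mid |f|_X\le K\varepsilon,\ |g^{-1}fg|_X\le K\varepsilon\}\bigr|\le N.
\]
Now set $R'=KR$ and $N'=N$. If $g\in G$ satisfies $|g|_Y\ge R'$, then $|g|_X\ge |g|_Y/K\ge R$, and for any $f$ with $|f|_Y\le\varepsilon$ and $|g^{-1}fg|_Y\le\varepsilon$ we have $|f|_X\le K\varepsilon$ and $|g^{-1}fg|_X\le K\varepsilon$. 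Hence the set of such $f$ injects into the set appearing in the $X$-acylindricity condition, so has at most $N'$ elements. This shows $\Gamma(G,Y)$ is acylindrical; the reverse implication follows by symmetry.

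Neither direction presents a real obstacle; the whole point of the lemma is a clean bookkeeping observation that changing a generating set in a ``uniformly comparable'' way does not affect either the large-scale geometry or the acylindricity of the Cayley graph. The only care needed is to rescale the acylindricity parameter by $K$ when passing between the two metrics, which is what the choice $R'=KR$ and the input parameter $K\varepsilon$ above accomplish.
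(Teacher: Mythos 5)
Your proof is correct and is exactly the intended argument: the paper simply declares this lemma obvious and records no proof, and your bi-Lipschitz equivalence of the word metrics plus the rescaling $\varepsilon\mapsto K\varepsilon$, $R\mapsto KR$ in the acylindricity condition (using the paper's reformulation of acylindricity for Cayley graphs) is the standard way to fill it in.
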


We begin by discussing the case of relatively hyperbolic groups. To the best of our knowledge, the proposition below has never been recorded before.

\begin{prop} \label{rhg}
Let $G$ be a group hyperbolic relative to a collection of subgroups $\Hl$. Let $X$ be a finite relative generating set of $G$ with respect to $\Hl$. Then the Cayley graph $\G$ is acylindrical.
\end{prop}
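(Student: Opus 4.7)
I verify the acylindricity condition directly: given $\varepsilon > 0$, the goal is to find $R, N > 0$ such that $|F_g| \leq N$ whenever $|g|_{X \cup \mathcal{H}} \geq R$, where $F_g := \{f \in G : |f|_{X \cup \mathcal{H}} \leq \varepsilon,\ |g^{-1}fg|_{X \cup \mathcal{H}} \leq \varepsilon\}$. The key idea is that any $f \in F_g$ must approximately preserve a geodesic from $1$ to $g$ in $\G$, and this rigidity, combined with the finiteness of $X$, will pin $f$ down to a finite set via the separating-coset technology of Section~4.

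\paragraph{Matching and localization.} Fix $D > 4C(1,0)$ with $C(1,0)$ from Lemma~\ref{C}, let $\alpha$ be a geodesic from $1$ to $g$ in $\G$, and write $S(1, g; D) = \{C_1 \prec \cdots \prec C_n\}$ with $C_j = x_j H_{\lambda_j}$ and corresponding essential components $a_j$ of $\alpha$. For each $f \in F_g$, I would consider the geodesic quadrilateral with sides $\alpha$, $[g, fg]$, $(f\alpha)^{-1}$, $[f, 1]$. Lemma~\ref{C} forces every $a_i$ to be non-isolated; since distinct components of a geodesic (in particular of $\alpha$ and of $f\alpha$) lie in distinct peripheral cosets, each $a_i$ must connect either to one of the at most $2\varepsilon$ components on the two short sides or to some component $fa_{\sigma(i)}$ of $f\alpha$, yielding a partial matching $C_i = fC_{\sigma(i)}$. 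The estimate $|\dxh(1, C_i) - \dxh(1, C_{\sigma(i)})| \leq \varepsilon$, combined with strict monotonicity of $j \mapsto \dxh(1, C_j)$, forces $|\sigma(i) - i| \leq \varepsilon$. For a matched deep $i$, we write $f = x_i h_\ast x_{\sigma(i)}^{-1}$ with $h_\ast \in H_{\lambda_i}$; applying Lemma~\ref{C} a second time to the quadrilateral bounded by the initial segment of $\alpha$ up to $x_i$, the single $H_{\lambda_i}$-edge from $x_i$ to $fx_{\sigma(i)} = x_i h_\ast$, the reverse initial segment of $f\alpha$, and $[f, 1]$, one checks that this middle edge is isolated as soon as $|x_i|_{X \cup \mathcal{H}} > \varepsilon$ (so no vertex of $[f,1]$ lies in $C_i$), and hence $\dl(1, h_\ast) \leq 4C$ with $\lambda = \lambda_i$. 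Local finiteness of $\dl$ together with finiteness of $\Lambda$ then confines $h_\ast$ to a finite set; assigning to each $f$ the smallest matched $i$ in a bounded window of length $O(\varepsilon)$ yields an injection of $F_g$ into a finite set of size $N(\varepsilon)$.

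\paragraph{Main obstacle.} The argument above requires $n$ to exceed some multiple of $\varepsilon$ (roughly $n \geq 3\varepsilon + 2$), which can fail -- most visibly when $g \in \langle X \rangle$, so that $\alpha$ consists entirely of $X$-edges and $n = 0$, even though $|g|_{X \cup \mathcal H}$ is arbitrarily large. Handling this residual regime is the principal technical difficulty, and is the point at which the hypothesis that $G$ is genuinely relatively hyperbolic (rather than just that $\Hl \h (G,X)$) must be used: I expect the cleanest treatment to exploit the finiteness of $X$ via the linear relative isoperimetric inequality (equivalently, the BCP property) for relatively hyperbolic groups to control $F_g \cap \langle X \rangle$ directly by a finite $X$-ball, and to rule out $f$ carrying $\mathcal{H}$-letters by showing such $f$ would make $|g^{-1}fg|_{X \cup \mathcal H}$ exceed $\varepsilon$. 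Merging these two regimes into a single uniform bound $N = N(\varepsilon)$ independent of $g$ is where the bulk of the work lies.
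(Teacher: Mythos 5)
Your first regime (many deep peripheral components on a geodesic from $1$ to $g$) is handled by a matching argument that is sound in outline and in fact closely parallels the component-matching used later in the paper for Lemma \ref{acylGy}; the only calibration issue there is that $D$ must be taken larger than $C(2\e+2)$ (the polygon has about $2\e+2$ geodesic sides), not just $4C(1,0)$, for Lemma \ref{C} to force the deep components to be non-isolated. But the proof as a whole has a genuine gap, and you have correctly located it yourself: there is no lower bound on the number of $(1,g;D)$-separating cosets in terms of $|g|_{X\cup\mathcal H}$, so the ``residual regime'' where $n$ is small is not a corner case but the generic situation (already $g\in\langle X\rangle$ with $|g|_{X\cup\mathcal H}$ arbitrarily large shows this). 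Your proposed fix --- controlling $F_g\cap\langle X\rangle$ by a finite $X$-ball via BCP and ``ruling out $f$ carrying $\mathcal H$-letters because $|g^{-1}fg|_{X\cup\mathcal H}$ would exceed $\e$'' --- is not carried out and is not correct as stated: an element $f\in H_\lambda$ has $|f|_{X\cup\mathcal H}=1$ and nothing a priori forces $|g^{-1}fg|_{X\cup\mathcal H}$ to be large; the finitely many such $f$ that survive are excluded only by local finiteness of $\d_X$ (or of the relative metrics), which is exactly the quantitative input you have not supplied. The comparability of $|g|_{X\cup\mathcal H}$ with the separating-coset count is precisely what fails for the original generating set $X$ and is the reason the paper later enlarges $X$ to $Y$ in Theorem \ref{XtoY}.

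The paper's own proof sidesteps all of this with a different, much shorter mechanism: by \cite[Theorem 3.26]{Osi06} (Lemma \ref{rips}), for a suitable finite relative generating set geodesic triangles in $\G$ are $\xi$-thin with respect to the \emph{locally finite} word metric $\d_X$, not merely with respect to $\dxh$. Taking a vertex $u$ on the geodesic $p$ from $1$ to $g$ at $\dxh$-distance $3\e+1$ from both endpoints' neighborhoods, one finds a vertex $v$ on $q=fp$ with $\d_X(u,v)\le 2\xi$, and then $f=u\,(u^{-1}v)\,(v^{-1}f)$ with $u$ and $f^{-1}v$ ranging over boundedly many initial subpaths and $u^{-1}v$ ranging over the finite ball $B_X(2\xi)$. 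This uses genuine relative hyperbolicity exactly once and uniformly, with no case split on how the geodesic decomposes into $X$-edges and peripheral components. If you want to complete your argument along your own lines, you would need to import a statement of this strength (thinness relative to $\d_X$, or the BCP property in quantitative form) and actually run it in the small-$n$ regime; as written, the heart of the proposition remains unproved.
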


Our proof is based on the fact that for a suitable $X$, geodesic triangles in $\G$ satisfy the Rips condition with respect to the \emph{locally finite} word metric $\d _X$. In case $G$ is finitely generated, this is \cite[Theorem 3.26]{Osi06}. Note however that the proof from \cite{Osi06} works in the case when $G$ is infinitely generated equally well if we allow word metrics to take infinite values, i.e., if we consider word metrics (and Cayley graphs) with respect to non-generating sets as in this paper.

\begin{lem}\label{rips}
Let $G$ be a group hyperbolic relative to a collection of subgroups $\Hl$. Then there exists $\xi>0$ and a finite relative generating set $X$ of $G$ with respect to $\Hl$ such that the following condition holds. For every triangle with geodesic sides $p$, $q$, $r$ in $\G$ and every vertex $u$ on $p$, there is a vertex $v$ on $q\cup r$ such that $\d _X (u,v)\le \xi$.
\end{lem}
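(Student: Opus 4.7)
The plan is to reduce to Theorem 3.26 of \cite{Osi06}, which proves exactly the desired $\d_X$-thinness of geodesic triangles in $\G$ under the additional standing hypothesis that $G$ is finitely generated. As the author has indicated in the paragraph preceding the lemma, the argument should go through in the general case provided we reinterpret the word metric $\d_X$ as a function $G\times G\to [0,\infty]$, with $\d_X(g,h)=\infty$ whenever $g^{-1}h\notin\langle X\rangle$. Equivalently, the Cayley graph $\Gamma(G,X)$ (with respect to $X$ alone, not $X\sqcup\mathcal H$) is allowed to be disconnected, and ``$\d_X(u,v)\le\xi$'' should be read as ``$u^{-1}v$ admits an $X$-word of length at most $\xi$''.

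Concretely, I would proceed as follows. First, fix any finite relative generating set $X$ of $G$ with respect to $\Hl$; such an $X$ exists by Proposition~2.8, since $G$ is relatively hyperbolic. By that proposition the Cayley graph $\G$ is hyperbolic, so its geodesic triangles are $\delta$-thin in the metric $\d_{X\cup\mathcal H}$ for some $\delta$. The task is to strengthen this $\d_{X\cup\mathcal H}$-thinness to $\d_X$-thinness, i.e.\ to remove the possibility that the vertex on $q\cup r$ close to $u$ in $\d_{X\cup\mathcal H}$ is only close via an $\mathcal H$-edge (which represents an arbitrarily large element of some $H_\lambda$, hence is $\infty$-far in $\d_X$).

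Next, I would re-run the argument of \cite[Theorem~3.26]{Osi06} in this setting. Its structure is: given a vertex $u$ on one side $p$ of a geodesic triangle $pqr$ in $\G$, hyperbolicity supplies a vertex $v'$ on $q\cup r$ with $\d_{X\cup\mathcal H}(u,v')\le\delta$; the key step is to upgrade $v'$ to a nearby vertex $v$ with $\d_X(u,v)$ bounded by a universal constant $\xi$. This upgrade is done by analyzing the $H_\lambda$-components of the three sides near $u$ and $v'$: any component not paired off by a connection with another side is isolated in a small polygon, and Lemma~\ref{C} of the present paper (or its analog in \cite{Osi06}) forces its $\dl$-length to be bounded, which in turn forces the corresponding $\mathcal H$-edge to correspond to an element of $H_\lambda$ with bounded $\d_X$-length in a controlled enlargement of $X$. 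Combining finitely many such bounded components yields the desired vertex $v$.

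The main obstacle is the purely bookkeeping task of checking that no step in \cite[\S3]{Osi06} tacitly uses finite generation of $G$. Finite generation is used there only through properties of $X$, which remains finite in our setting, and through the assertion that $\d_X$ is a genuine metric; but every geometric argument in that section is local, proceeding by surgery on bounded subpaths of geodesics in $\G$, and these operations remain meaningful when $\d_X$ takes values in $[0,\infty]$. Once this verification is done, $\xi$ can be taken to depend only on $\delta$, on the constant $C=C(1,0)$ of Lemma~\ref{C}, and on the (bounded) diameters of the finitely many $\dl$-balls produced along the way; this yields the uniform constant required by the lemma.
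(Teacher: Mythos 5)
Your proposal matches the paper's own treatment: the paper proves Lemma~\ref{rips} precisely by citing \cite[Theorem 3.26]{Osi06} and observing that the argument there goes through for infinitely generated $G$ once word metrics (and Cayley graphs) with respect to non-generating sets are allowed to take the value $\infty$, which is exactly your reduction. Your additional sketch of the component analysis inside \cite[\S 3]{Osi06} is consistent with that source, so the proposal is correct and essentially identical in approach.
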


\begin{figure}
  % Requires \usepackage{graphicx}
 \centering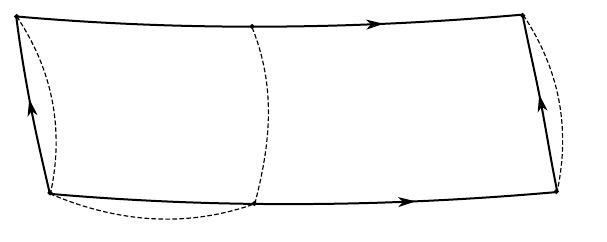\\
  \caption{}\label{fig07}
\end{figure}

\begin{proof}[Proof of Proposition \ref{rhg}]
By Lemma \ref{acyl} it suffices to prove the proposition for some finite relative generating set $X$. Let $X$ and $\xi$ be as in Lemma \ref{rips}. Let us fix $\e>0$. We can assume that $\e\in \mathbb N$ and $\e\ge \xi $.

Let $g\in G$ be an element of length $|g|_{X\cup \mathcal H} \ge 6\e+2$. Let $f$ be any element of $G$ such that
\begin{equation}\label{ff}
|f|_{X\cup \mathcal H}\le \e\;\;\;{\rm and}\;\;\; |g^{-1}fg|_{X\cup \mathcal H}\le \e.
\end{equation}
Denote by $p$ any geodesic in $\G$ going from $1$ to $g$ and let $q=fp$; thus $q$ is a geodesic going from $f$ to $fg$ and $\Lab(p)\equiv\Lab(q)$. Also let $s_1$ and $s_2$ be geodesics in $\G$ going from $1$ to $f$ and from $g$ to $fg$, respectively.

Let $u$ be a vertex of $p$ such that
\begin{equation}\label{d1u}
\dxh (1,u)=3\e +1.
\end{equation}
Note that we also have
\begin{equation}\label{dug}
\dxh (u,g)=|g|_{X\cup \mathcal H} - (3\e +1)\ge 3\e +1.
\end{equation}
Drawing a diagonal in the geodesic quadrangle $ps_2q^{-1}s_1^{-1}$ and applying Lemma \ref{rips} twice, we obtain that there exists a vertex $v$ on $q\cup s_1\cup s_2$ such that $\d_X (u,v)\le 2\xi\le 2\e$. Note that if $v\in s_1$,  then $$\dxh (u,1)\le \dxh (u,v) + \dxh(v,1) \le \d_X(u,v) +\ell(s_1)\le 3\e,$$ which contradicts (\ref{d1u}). Similarly using (\ref{dug}), we conclude that $v$ cannot belong to $s_2$. Thus $v\in q$.

We have $$f=u\cdot (u^{-1}v) \cdot (v^{-1}f).$$ Since $\Lab (q)\equiv \Lab (p)$, $f^{-1}v$ is represented by the label of the initial subpath of $q$ of length at most $$\dxh (f,v)\le \dxh (f,1)+\dxh (1,u)+\dxh(u,v)\le 6\e+1.$$ Recall that $\Lab (q)\equiv \Lab (p)$. Therefore, the element $v^{-1}f$ is represented by the label of an initial segment of $p$ of length at most $6\e +1$. Hence there are $6\e +2$ possible choices for this element. Recall also that $|u^{-1}v|_X\le 2\e$. Thus there are at most $(6\e+2) |B_X(2\e)|$ elements $f$ satisfying (\ref{ff}), where $B_X(2\e)$ is a ball of radius $2\e$ in $G$ with respect to the metric $\d _X$. Since $X$ is finite, so is $B_X(2\e)$.
\end{proof}

The analogue of Proposition \ref{rhg} for groups with hyperbolically embedded collections of subgroups does not hold in general. A simple counterexample is the group $G=(K\times  \mathbb Z)\ast H$, where $K$ is an  infinite group and $H$ is non-trivial. Let $X=K\cup \{x\}$, where $x$ is a generator of $\mathbb Z$. It is easy to verify that $H\h (G,X)$. However the action of $G$ on $\G $ is not acylindrical, as any element of $K$ moves any vertex of the infinite geodesic ray in $\G $ starting from $1$ and labelled by the infinite power of $x$ by a distance at most $1$. On the other hand, if we enlarge the relative generating set to $Y=K\times \mathbb Z$, then $\Gy$ is quasi-isometric to the Bass-Serre tree associated to the free product structure of $G$ and is obviously acylindrical. The main result of this section shows that a similar trick can be performed in the general case.

\begin{thm}\label{XtoY}
Let $G$ be a group, $\Hl $ a finite collection of subgroups of $G$, $X$ a subset of $G$. Suppose that $\Hl\h (G,X)$. Then there exists $Y\subseteq G$ such that $X\subseteq Y$ and the following conditions hold.
\begin{enumerate}
 \item[(a)] $\Hl\h (G,Y)$. In particular, the Cayley graph $\Gamma (G,Y\sqcup \mathcal H)$ is hyperbolic.
 \item[(b)] The action of $G$ on $\Gamma (G,Y\sqcup \mathcal H)$ is acylindrical.
\end{enumerate}
\end{thm}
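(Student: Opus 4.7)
The plan is to enlarge $X$ by adjoining every element that fails to essentially penetrate any coset of any $H_\lambda$. Fix the constant $C = C(1,0)$ from Lemma \ref{C} and any $D \geq 3C$, and set
\[
Y \;=\; X \;\cup\; \bigl\{\, g \in G \setminus \textstyle\bigcup_{\lambda \in \Lambda} H_\lambda \,:\, S(1,g;D) = \emptyset \bigr\}.
\]
Symmetry of $Y$ is immediate from the translation-equivariance of separating cosets. In the example $G = (K \times \mathbb Z) \ast H$ preceding the theorem, this prescription recovers exactly the ad hoc enlargement $Y = K\times\mathbb Z$ described there.

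For the hyperbolic embedding (a), I would lift each $\Gy$-path to a $\G$-path by replacing every edge labeled by some $y \in Y\setminus X$ with a fixed $\G$-geodesic joining its endpoints. Because $(Y\setminus X) \cap \bigcup_\mu H_\mu = \emptyset$, the lift of such an edge contains no edge of any $\Gamma(H_\mu,H_\mu)$, so every admissible path in $\Gy$ lifts to an admissible path in $\G$; consequently $(H_\lambda, \widehat{d}^{\,Y}_\lambda)$ embeds into $(H_\lambda, \dl)$ with bounded distortion, yielding local finiteness. Hyperbolicity of $\Gy$ I would deduce by showing that the sequence of separating cosets from Lemma \ref{sep-ref} along a $\Gy$-geodesic coincides with the sequence along its $\G$-lift (since $Y\setminus X$-edges lift to subpaths with no essentially penetrated cosets), which reduces Rips-thinness of $\Gy$-triangles to that of their lifts.

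For acylindricity (b), the crucial comparability is
\[
\tfrac{1}{c_1}\bigl(\dyh(u,v) - c_2\bigr) \;\leq\; |S(u,v;D)| \;\leq\; c_3\,\dyh(u,v).
\]
The upper bound follows by iterating Lemma \ref{HO} along a $\Gy$-geodesic (each $\Gy$-edge contributes at most one separating coset); the lower bound uses that $S(u,v;D) = \emptyset$ forces $u^{-1}v \in Y \cup \mathcal H \cup \{1\}$, hence $\dyh(u,v) \leq 1$. Given $\e > 0$, I would choose $R$ so that $\dyh(x,y) \geq R$ forces $|S(x,y;D)|$ to exceed a large multiple of $\e$. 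If $\dyh(x,fx), \dyh(y,fy) \leq \e$, two applications of Lemma \ref{HO} to the quadrilateral $x, fx, fy, y$ (chained through the vertices $fx$ and $fy$) show that $|S(x,y;D) \triangle f \cdot S(x,y;D)|$ is linear in $\e$, so $f$ preserves setwise all but boundedly many cosets of $S(x,y;D)$. The linear order $\preceq$ and its $G$-equivariance then force $f$ to individually fix specific separating cosets near both $x$ and $y$; combining this constraint with the displacement bound $\dyh(1,f) \leq 2\e$ and the local finiteness of $(H_\lambda,\widehat{d}^{\,Y}_\lambda)$ pins $f$ down to finitely many choices depending only on $\e$.

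The main obstacle will be verifying hyperbolicity of $\Gy$: since new $Y$-edges can correspond to arbitrarily long $\G$-paths --- for instance, along the $\mathbb Z$-direction in the motivating example --- $\Gy$ is not quasi-isometric to $\G$, and one cannot transfer hyperbolicity via a quasi-isometry. Instead one must exploit that $Y$-edges are precisely those corresponding to directions with no essentially penetrated cosets, so that $\Gy$-geodesics interact with separating cosets in the same combinatorial way as $\G$-geodesics do; this is the content that encodes the coarse hyperbolic geometry, but turning this idea into a rigorous Rips-thinness estimate for $\Gy$-triangles is the technical heart of the proof.
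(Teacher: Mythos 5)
Your choice of $Y$ (the elements $y$ with $S(1,y;D)=\emptyset$) is essentially the paper's, and your outline of part (b) via the two-sided comparison between $\dyh$ and $|S(\cdot,\cdot;D)|$ is on the right track, but there are genuine gaps. The most serious is in the local finiteness argument for (a), which rests on a false claim: you assert that because $y\in Y\setminus X$ lies outside $\bigcup_\mu H_\mu$, a $\G$-geodesic from $1$ to $y$ contains no edges labelled by letters of any $H_\mu$. The condition $S(1,y;D)=\emptyset$ does not exclude $H_\mu$-components from such a geodesic; it only forces every component $c$ to satisfy $\dl (c_-,c_+)\le D$. So the lift of an admissible $\Gy$-path need not be admissible in $\G$. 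Moreover, even if it were, its length is not bounded by any function of the $\Gy$-length (each $Y$-edge can lift to an arbitrarily long geodesic), so no ``bounded distortion'' of the relative metrics follows from comparing path lengths. The paper circumvents both problems at once: it enlarges $X$ to a set $Z$ by adjoining the finitely many elements of each $H_\lambda$ with $\dl(1,h)\le D$ (this is where finiteness of $\Lambda$ and Lemma \ref{X1X2} are used), replaces the short components of the lifted paths by $Z$-letters to restore admissibility, and then bounds the relative distance not by the length of the resulting path but by applying the isolated-component Lemma \ref{C} to the polygon formed by this path and the edge labelled $h^{-1}$: a concatenation of $n$ geodesics yields a bound of the form $C^Z(n+1)$.

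Second, you leave hyperbolicity of $\Gy$ essentially unproved and explicitly flag it as the missing technical heart. The paper disposes of it in a few lines using the Kapovich--Rafi criterion (Lemma \ref{KR}): one only needs that for each $y\in Y$ and each $\G$-geodesic $p$ from $1$ to $y$, the diameter of $p$ in $\Gy$ is uniformly bounded, and this follows because every vertex $v$ of $p$ satisfies $S(1,v;D)=\emptyset$ (a coset essentially penetrated by a geodesic to $v$ would be essentially penetrated by a geodesic to $y$), hence $v\in Y$ and $\dyh(1,v)\le 1$. Finally, in part (b) the decisive quantitative step is the uniform bound $\dl(u,v)\le 6C+D$ on the relative distance between the entrance points of $p$ and $fp$ into a common separating coset $C\notin S(1,f;D)$; your appeal to ``$f$ fixing cosets'' together with local finiteness does not by itself produce a finite list of candidates for $u^{-1}v$ without such an estimate, which in the paper again comes from Lemma \ref{C} applied to carefully chosen loops, with a case analysis according to whether the connecting $H_\lambda$-edge is isolated.
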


The proof of Theorem \ref{XtoY} is given in a sequence of lemmas below. Throughout the rest of this section we are working under notation and assumptions of Theorem \ref{XtoY}. Recall that we have already fixed a constant $C>0$ satisfying Lemma \ref{C} and $D$  satisfying (\ref{DC}).
We will show that the subset $Y=Y(D)\subseteq G$ defined by
\begin{equation}\label{Y}
Y=\{ y\in G \mid S(1,y;D)=\emptyset\}
\end{equation}
satisfies conditions (a) and (b) from Theorem \ref{XtoY}.

To prove hyperbolicity of $\Gy$ we will make use of a sufficient condition from \cite{KR}. We state it in a much simplified form here. (In the notation of \cite{KR}, we have $L=M_1=1$ and $M_2=M$.)

\begin{lem}[{\cite[Corollary 2.4]{KR}}]\label{KR}
Let $\Sigma $ be a graph obtained from a graph $\Gamma $ by adding edges. Suppose that $\Gamma $ is hyperbolic and there exists $M>0$ such that for any two vertices $x,y$ of $\Gamma $ connected by an edge in $\Sigma$ and  any geodesic $p$ in $\Gamma $ going from $x$ to $y$, the diameter of $p$ in $\Sigma $ is at most $M$. Then $\Sigma $ is also hyperbolic.
\end{lem}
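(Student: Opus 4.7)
The plan is to verify the Rips thin triangle condition in $\Sigma$ with some constant $\delta'=\delta'(M,\delta)$. Fix a $\Sigma$-geodesic triangle $\Delta$ with sides $p_1,p_2,p_3$, and fix a vertex $u$ on $p_1$; the goal is to bound $d_\Sigma(u,p_2\cup p_3)$ by a function of $M$ and $\delta$ only.

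The first step is an \emph{inflation} procedure. For each $\Sigma$-geodesic side $p_i$, replace every edge $e$ of $p_i$ that is not already an edge of $\Gamma$ by a chosen $\Gamma$-geodesic between its endpoints, leaving $\Gamma$-edges of $p_i$ untouched. This produces a $\Gamma$-path $\tilde p_i$ with the same endpoints as $p_i$. By hypothesis, each inserted segment has $\Sigma$-diameter at most $M$, so every vertex of $\tilde p_i$ is within $\Sigma$-distance $M$ of a vertex of $p_i$, and vice versa. Next, pick genuine $\Gamma$-geodesics $q_i$ between the same endpoints as $p_i$; these three form a $\delta$-thin geodesic triangle in $\Gamma$, hence a $\delta$-thin triangle in $\Sigma$ since distances can only decrease when passing from $\Gamma$ to $\Sigma$.

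The heart of the proof is to show that $\tilde p_i$ lies in a $\Sigma$-bounded neighborhood of $q_i$, for a constant $K=K(M,\delta)$. A direct $\Gamma$-comparison fails because $\tilde p_i$ can have huge $\Gamma$-length relative to $d_\Gamma((p_i)_-,(p_i)_+)$ and need not be a $\Gamma$-quasi-geodesic. Instead I would use nearest-point projection in $\Gamma$: project each vertex of $\tilde p_i$ onto $q_i$, and examine any maximal bigon in $\Gamma$ bounded by a subarc $\sigma$ of $\tilde p_i$ and a subarc $\tau$ of $q_i$ with matching endpoints. By $\delta$-hyperbolicity of $\Gamma$, $\tau$ lies in a $\delta$-neighborhood of $\sigma$ in $\Gamma$. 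Walk along $\sigma$ edge by edge: original $\Gamma$-edges of $p_i$ contribute length at most $1$, while each inserted $\Gamma$-geodesic block has $\Sigma$-diameter at most $M$ by hypothesis. A counting/inductive argument on the number of blocks in $\sigma$, combined with $\Gamma$-slimness of the bigon, should yield that the $\Sigma$-diameter of $\sigma\cup\tau$ is bounded in terms of $M$ and $\delta$ alone. Chaining this gives $\tilde p_i\subseteq N^{\Sigma}_{K}(q_i)$ and vice versa.

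Once the three intermediate $\Sigma$-approximations $p_i\leadsto\tilde p_i\leadsto q_i$ are controlled, we concatenate: the vertex $u\in p_1$ is within $\Sigma$-distance $M$ of some vertex on $\tilde p_1$, which is within $K$ of a vertex on $q_1$, which is within $\delta$ (in $\Gamma$, hence in $\Sigma$) of some vertex on $q_2\cup q_3$, and we reverse the chain on that side. This gives $d_\Sigma(u,p_2\cup p_3)\le 2M+2K+\delta$, proving hyperbolicity. The main obstacle, as indicated, is the middle step: controlling the $\Sigma$-Hausdorff distance between the inflated path $\tilde p_i$ and the genuine $\Gamma$-geodesic $q_i$ despite $\tilde p_i$ possibly being very far from $\Gamma$-quasi-geodesic. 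A cleaner alternative would be to invoke Bowditch's guessing-geodesics criterion with $\eta(x,y)$ taken to be a $\Gamma$-geodesic from $x$ to $y$ viewed as a path in $\Sigma$: thin triangles for the family $\{\eta(x,y)\}$ follow at once from $\Gamma$-hyperbolicity, and the hypothesis supplies exactly the local-approximation axiom needed, neatly packaging the technical content.
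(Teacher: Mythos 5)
The middle step of your direct argument --- the one you yourself flag as the main obstacle --- contains an actual error, not merely an omission. For an arbitrary path $\sigma$ in a $\delta$-hyperbolic graph and a geodesic $\tau$ with the same endpoints, $\delta$-hyperbolicity does \emph{not} place $\tau$ in the $\delta$-neighborhood of $\sigma$; that conclusion requires $\sigma$ to be a geodesic or a uniform quasi-geodesic. The correct general statement is only logarithmic, $\tau\subseteq N_{\delta\lceil\log_2\ell(\sigma)\rceil+1}(\sigma)$, and this is sharp. Since, as you concede, the inflated path $\tilde p_i$ need not be a $\Gamma$-quasi-geodesic and its $\Gamma$-length is controlled by nothing (each inserted block may be an arbitrarily long $\Gamma$-geodesic, and there may be arbitrarily many blocks), the bigon you consider is not $\delta$-slim, and the sentence ``a counting/inductive argument \dots should yield'' sits exactly where the real work lives. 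The standard way to remove the logarithm is a two-stage argument: a halving/subdivision step giving a bound of the form $K\log_2(\mbox{number of blocks})$, followed by a self-improvement (contraction/exponential-divergence) step that upgrades this to a uniform constant and simultaneously yields hyperbolicity. None of that appears in your sketch, so no constant $K(M,\delta)$ is actually produced, and neither of the two inclusions $\tilde p_i\subseteq N^{\Sigma}_{K}(q_i)$ and $q_i\subseteq N^{\Sigma}_{K}(\tilde p_i)$ (your final chain needs both directions) is established.

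For comparison: the paper does not prove this lemma at all --- it is quoted as Corollary 2.4 of Kapovich--Rafi, which is in turn deduced there from a guessing-geodesics criterion (their Proposition 2.2, a variant of the criteria of Bowditch and Hamenst\"adt). So your closing ``cleaner alternative'' is in fact the actual route: take $\eta(x,y)$ to be a chosen $\Gamma$-geodesic; the thin-triangle axiom for the family follows from $\delta$-hyperbolicity of $\Gamma$ together with $d_\Sigma\le d_\Gamma$, and the bounded-diameter axiom for adjacent $x,y$ is precisely the hypothesis of the lemma. That verification is correct and complete, but it is a reduction to another quoted theorem rather than a self-contained proof: the hard content of the lemma is hidden inside that criterion, and it is exactly the content your direct argument fails to supply. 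If you want a proof from first principles, you should carry out the subdivision-plus-contraction argument; if you are content to quote a criterion, discard the first two-thirds of your write-up and keep only the last three sentences.
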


\begin{lem}\label{hypGy}
We have $X\subseteq Y $ and $\Gy$ is hyperbolic.
\end{lem}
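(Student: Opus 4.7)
The statement has two parts. The inclusion $X\subseteq Y$ I would verify directly from the definition. For any $x\in X$, every $\G$-geodesic from $1$ to $x$ consists of at most a single edge. If that edge is labeled by $x\in X$ itself, the path has no $H_\lambda$-component. If instead it is labeled by some $h\in H_\lambda$ with $h=_G x$, then the edge is a length-one $H_\lambda$-component, but the parallel edge labeled by $x$ provides an admissible path of length $1$, so $\widehat{d}_\lambda(1,x)\le 1\le D$. In either case no coset is essentially penetrated, so $S(1,x;D)=\emptyset$ and $x\in Y$.

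To prove that $\Gy$ is hyperbolic, my plan is to invoke Lemma~\ref{KR} with $\Gamma=\G$ and $\Sigma=\Gy$. Since $X\subseteq Y$, the graph $\Gy$ is obtained from the hyperbolic graph $\G$ by adding edges (those labeled by elements of $Y\setminus X$). What has to be exhibited is a constant $M$ such that for every edge $\{f,g\}$ of $\Gy$, every $\G$-geodesic from $f$ to $g$ has $\Gy$-diameter at most $M$. By $G$-equivariance one may assume $f=1$, and the case of an $\mathcal H$-edge is trivial since the $\G$-geodesic is a single edge. The main case is a $Y$-edge from $1$ to some $y\in Y$, where a $\G$-geodesic $p$ from $1$ to $y$ must be controlled in $\Gy$.

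To control such a $p$, I would decompose it canonically as $p=p_0a_1p_1\cdots a_np_n$, where each $a_i$ is an $H_{\lambda_i}$-component and each $p_i$ is labeled by a (possibly trivial) word in $X$. The condition $y\in Y$, together with Lemma~\ref{sep-ref}, forces $\widehat{d}_{\lambda_i}((a_i)_-,(a_i)_+)\le D$ for every component $a_i$; otherwise $(a_i)_-H_{\lambda_i}$ would be a $(1,y;D)$-separating coset. The direct $H_{\lambda_i}$-edge from $(a_i)_-$ to $(a_i)_+$ lies in $\Gy$, so any two vertices inside the same component are at $\Gy$-distance at most $1$.

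The main obstacle is bounding the $\Gy$-distance between the endpoints $(p_i)_-$ and $(p_i)_+$ of each $X$-segment. The plan is a replacement argument: if this distance were too large, there would exist a $((p_i)_-,(p_i)_+;D)$-separating coset $C$ witnessed by some alternative $\G$-geodesic $q$. Splicing $q$ in for $p_i$ inside $p$ yields a new $\G$-geodesic from $1$ to $y$ which, by $y\in Y$, cannot essentially penetrate $C$. Consequently the $H_\lambda$-component of $q$ witnessing $C$ must merge at the splicing vertices with one of the adjacent components $a_i,a_{i+1}$ of $p$; using the triangle inequality for $\widehat{d}_\lambda$, the bounds $\widehat{d}_{\lambda_j}((a_j)_-,(a_j)_+)\le D$ from the previous paragraph, and the inequality $D\ge 3C$ from Lemma~\ref{C}, one rules out too much such merging (or, via Lemma~\ref{HO}, restricts the number of bad cosets to a uniformly bounded quantity) and obtains the desired constant $M$.
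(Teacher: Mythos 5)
Your proof of $X\subseteq Y$ is fine (the paper gets it slightly more cheaply: the single edge labelled by $x$ is a geodesic with no components, so $S(1,x;D)=\emptyset$ follows at once from Lemma \ref{sep}, with no need to discuss parallel $\mathcal H$-edges or to assume $D\ge 1$). The choice of Lemma \ref{KR} for the second part is also the right one, and your observation that every component $a_i$ of a geodesic $p$ from $1$ to $y\in Y$ satisfies $\dl ((a_i)_-,(a_i)_+)\le D$ is correct. But the overall strategy for bounding ${\rm diam}_{Y\cup\mathcal H}(p)$ has a genuine gap. You propose to bound $\dyh ((p_i)_-,(p_i)_+)$ for each $X$-segment separately, with each component contributing at most $1$. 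Even if every such local bound equals $1$, summing along the decomposition $p=p_0a_1p_1\cdots a_np_n$ only yields ${\rm diam}_{Y\cup\mathcal H}(p)=O(n)$, and $n$ is \emph{not} uniformly bounded: $\ell(p)=|y|_{X\cup\mathcal H}$ is unbounded as $y$ ranges over $Y$ (for instance, in the example $G=(K\times\mathbb Z)\ast H$ preceding Theorem \ref{XtoY}, $Y$ contains $x^n$ for all $n$). So a per-piece estimate cannot produce the uniform constant $M$ that Lemma \ref{KR} requires. Incidentally, the ``main obstacle'' you isolate is not where the difficulty lies: each $p_i$ is itself an $X$-labelled geodesic between its endpoints, hence has no components at all, so $S((p_i)_-,(p_i)_+;D)=\emptyset$ by Lemma \ref{sep} and $(p_i)_-^{-1}(p_i)_+\in Y$ with no replacement argument needed.

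The missing idea is to control \emph{every} vertex $v$ of $p$ relative to a single basepoint rather than relative to its neighbours. Write $p=p_1p_2$ with $(p_1)_+=v$. If $S(1,v;D)\ne\emptyset$, pick a geodesic $q$ from $1$ to $v$ essentially penetrating some coset $C$; then $q p_2$ has length $\ell(q)+\ell(p_2)=\ell(p_1)+\ell(p_2)=\dxh (1,y)$, so it is again a geodesic from $1$ to $y$, and it still essentially penetrates $C$ (the witnessing edge of $q$ can neither be absorbed into nor duplicated by another component of $qp_2$ lying in the same coset, since either situation would allow one to shorten $qp_2$, contradicting its geodesicity). This contradicts $S(1,y;D)=\emptyset$, i.e., the definition (\ref{Y}) of $Y$. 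Hence $S(1,v;D)=\emptyset$, so $v\in Y$ and $\dyh (1,v)\le 1$ for every vertex $v$ of $p$, which gives ${\rm diam}_{Y\cup\mathcal H}(p)\le 2$ outright. Your splicing idea is the right kind of move, but it must be applied to prefixes of $p$, measuring each vertex against $1$, not to the individual $X$-segments.
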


\begin{proof}
If $x\in X$, $x\ne 1$, then the edge labelled by $x$ is a geodesic from $1$ to $x$ in $\G$. It does not contain any components and hence $S(1,x;D)=\emptyset$ by Lemma \ref{sep}. Thus $X\subseteq Y $. In particular, $\G$ is a subgraph of $\Gy$. According to Lemma \ref{KR}, it suffices to prove that for every $y\in Y $ and every geodesic $p$ in $\G$ with $p_-=1, p_+=y$, ${\rm diam}_{Y \cup\mathcal H} (p)$ is bounded from above by a uniform constant independent of $y$ and $p$.
Let $y$, $p$ be as above and let $v$ be any vertex on $p$. Let $p=p_1p_2$, where $(p_1)_+=(p_2)_-=v$. If $S(1,v; D)\ne \emptyset$, there exists a geodesic $q$ from $1$ to $v$ that essentially penetrates a coset $C\in S(1,v; D)$. Then $qp_2$ is a geodesic from $1$ to $y$ that essentially penetrates $C$. Thus $S(1,y;D)\ne \emptyset$, which contradicts (\ref{Y}). Therefore $S(1,v;D)= \emptyset$. This means that $v\in Y $ and hence $\dyh(1,v)\le 1$. Thus ${\rm diam}_{Y \cup\mathcal H} (p)\le 2$. It remains to apply Lemma \ref{KR}.
\end{proof}

To prove that $\Hl\h (G,Y )$ we will need the following result from \cite{DGO}.

\begin{lem}[{\cite[Corollary 4.27]{DGO}}]\label{X1X2}
Let $G$ be a group, $\Hl$ a collection of subgroups of $G$, $X_1,X_2\subseteq G$ two relative generating sets of $G$ with respect to $\Hl$. Suppose that $|X_1\vartriangle X_2|<\infty $. Then $\Hl\h (G,X_1)$ if and only if $\Hl \h (G, X_2)$.
\end{lem}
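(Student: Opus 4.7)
The plan is to reduce, by passing to the auxiliary set $X_3 := X_1 \cup X_2$ and invoking transitivity of the biconditional, to the case where $X_1 \subseteq X_2$ and $F := X_2 \setminus X_1$ is finite. Fix, for each $f \in F$, a word $w_f$ over $X_1 \sqcup \mathcal{H}$ representing $f$ (such a word exists because $X_1 \cup \mathcal H$ generates $G$), and set $N := \max_{f \in F} |w_f|$.

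For the hyperbolicity of the Cayley graph, since $X_1 \subseteq X_2$ the graph $\Gamma (G, X_1 \sqcup \mathcal H)$ sits as a subgraph of $\Gamma (G, X_2 \sqcup \mathcal H)$, and each of the finitely many added edge-types (one per $f \in F$, at every vertex) is realized as an $X_1 \sqcup \mathcal H$-path of length at most $N$. Hence the identity on $G$ is a $G$-equivariant $(N,0)$-quasi-isometry between the two Cayley graphs, and hyperbolicity is quasi-isometry invariant.

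For local finiteness of $\widehat{d}_\lambda$ on $H_\lambda$, one direction is immediate: every $\lambda$-admissible path in $\Gamma (G, X_1 \sqcup \mathcal H)$ is automatically $\lambda$-admissible in $\Gamma (G, X_2 \sqcup \mathcal H)$ (since $X_1 \subseteq X_2$ and the relevant $\mathcal H$-alphabet is the same), so $\widehat{d}_\lambda^{X_2} \le \widehat{d}_\lambda^{X_1}$ pointwise, and local finiteness of $\widehat{d}_\lambda^{X_2}$ transfers to $\widehat{d}_\lambda^{X_1}$. For the converse, take a $\lambda$-admissible path $p$ in $\Gamma (G, X_2 \sqcup \mathcal H)$ of length $\ell$ between $h,k \in H_\lambda$ and substitute each $f$-edge by the word $w_f$; this produces a path $p'$ in $\Gamma (G, X_1 \sqcup \mathcal H)$ of length at most $N\ell$ with endpoints $h,k$.

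The main obstacle is that $p'$ need not be $\lambda$-admissible: the substitution may introduce $\mathcal H_\lambda$-letters drawn from the finite set $\mathcal{E} := \{ h \in H_\lambda : h \text{ appears as a letter in some } w_f\}$. The key technical step is to short-circuit each such bad $H_\lambda$-component of $p'$ by a $\lambda$-admissible detour in $\Gamma (G, X_1 \sqcup \mathcal H)$ of uniformly bounded length. Choosing each $w_f$ to be a geodesic of $\Gamma (G, X_1 \sqcup \mathcal H)$ and applying Lemma \ref{C} to geodesic polygons whose sides are built from subwords of $p'$, segments of the $w_f$, and alternative geodesics in $\Gamma (G, X_1 \sqcup \mathcal H)$, one obtains that every isolated $\mathcal H_\lambda$-component arising from such a substitution has $\widehat{d}_\lambda^{X_1}$-displacement bounded by a constant depending only on $C$ and $N$. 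Replacing each of the at most $N\ell$ bad components by its detour yields a $\lambda$-admissible path in $\Gamma (G, X_1 \sqcup \mathcal H)$ whose length is linear in $\ell$, so $\widehat{d}_\lambda^{X_1}$ is linearly dominated by $\widehat{d}_\lambda^{X_2}$ on $H_\lambda$, giving the required local finiteness transfer and completing the lemma.
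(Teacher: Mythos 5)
The paper itself gives no proof of this lemma: it is imported verbatim from \cite[Corollary 4.27]{DGO}, so there is no in-text argument to compare yours against; I will therefore assess your argument on its own. Your reduction to the nested case $X_1\subseteq X_2$, the quasi-isometry argument for hyperbolicity, and the easy inequality $\widehat{d}_\lambda^{X_2}\le\widehat{d}_\lambda^{X_1}$ are all fine. The gap is in your final step: the ``$\lambda$-admissible detour of uniformly bounded length'' around a bad component need not exist, and the conclusion you draw from it --- that $\widehat{d}_\lambda^{X_1}$ is linearly dominated by $\widehat{d}_\lambda^{X_2}$ --- is false. Take $G=H\ast\langle x\rangle$ with $H$ infinite and $X_1=\{x^{\pm1}\}$, as in part (b) of the example following Definition \ref{he-def}: there $H\h(G,X_1)$ and $\widehat{d}(h_1,h_2)=\infty$ whenever $h_1\ne h_2$. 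Now let $X_2=X_1\cup\{(h_0x)^{\pm1}\}$ for some fixed $h_0\ne1$. The path $1\to h_0x\to h_0$ with labels $h_0x\in X_2$ and $x^{-1}$ is admissible in $\Gamma(G,X_2\sqcup\mathcal H)$, so $\widehat{d}^{X_2}(1,h_0)\le2$ while $\widehat{d}^{X_1}(1,h_0)=\infty$; no domination of any kind holds. Concretely, every word $w_f$ over $X_1\sqcup\mathcal H$ representing $f=h_0x$ must contain an $H$-letter (its $X_1$-letters alone only generate $\langle x\rangle$), the geodesic choice is $w_f\equiv h_0\cdot x$, and substituting it into the $f$-edge issuing from $1$ creates the edge of $\Gamma(H,H)$ from $1$ to $h_0$, whose endpoints are at infinite relative distance --- there is nothing to detour through.

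What the correct argument does (and what the analogous Lemmas \ref{he} and \ref{acylGy} of this paper do) is \emph{not} bound $\widehat{d}_\lambda^{X_1}(1,h)$, but directly bound the \emph{number} of $h\in H_\lambda$ with $\widehat{d}_\lambda^{X_2}(1,h)\le n$. Close the substituted path by the edge $e$ labelled $h^{-1}$ and regard the loop as a polygon with at most $Nn+1$ geodesic sides in $\Gamma(G,X_1\sqcup\mathcal H)$. The only components that can be connected to $e$ are single $H_\lambda$-edges of the inserted translates $g\gamma_f$ (the $\mathcal H$-edges of the original admissible path cannot connect to $e$, exactly as in your easy direction); their labels range over a fixed finite subset of $H_\lambda$, and there are boundedly many of them. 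Cutting the polygon at these connections and applying Lemma \ref{C} to each resulting sub-polygon, in which the relevant component \emph{is} isolated, expresses $h$ as a product of boundedly many factors, each lying in a finite set --- either the finite set of edge labels of the $\gamma_f$, or a ball of radius $C\cdot(\text{number of sides})$ in $(H_\lambda,\widehat{d}_\lambda^{X_1})$, finite by hypothesis. This yields finiteness of the ball in $(H_\lambda,\widehat{d}_\lambda^{X_2})$ without ever asserting that $\widehat{d}_\lambda^{X_1}(1,h)<\infty$. I recommend rebuilding your last paragraph along these lines.
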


\begin{lem}\label{he}
$\Hl\h (G,Y )$.
\end{lem}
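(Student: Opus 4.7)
The plan is to verify both conditions in Definition \ref{he-def} for the collection $\Hl$ and the generating set $Y$.

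Condition (a) is essentially already done by Lemma \ref{hypGy}. Since $X\subseteq Y$ (by Lemma \ref{hypGy}) and $X\cup\bigcup_\lambda H_\lambda$ generates $G$ (because $\Hl\h(G,X)$), it follows that $Y\cup\bigcup_\lambda H_\lambda$ generates $G$; and $\Gy$ is hyperbolic again by Lemma \ref{hypGy}.

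The actual content is condition (b): for every $\lambda\in\Lambda$, the space $(H_\lambda,\dl)$, where now $\dl$ is the relative metric with respect to the bigger generating set $Y$, must be locally finite. Fix $\lambda$ and $r>0$. Given $h\in H_\lambda$ with $\dl(1,h)\le r$, I would take a $\lambda$-admissible path $p$ from $1$ to $h$ in $\Gy$ of length $n\le r$; by admissibility each edge of $p$ is labelled either by an element of $Y$ or by an element of some $H_\mu$ with $\mu\ne\lambda$.

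The key construction is to lift $p$ to a path $\tilde p$ in $\G$ by replacing each $Y$-edge (labelled $y\in Y$) by a geodesic in $\G$ between its endpoints, and leaving the $H_\mu$-edges ($\mu\ne\lambda$) as they are (they are already edges of $\G$). The defining property $S(1,y;D)=\emptyset$ for $y\in Y$, translated appropriately, tells me that every $H_\nu$-component (for every $\nu$) of such a replacement geodesic has $\widehat d_\nu^X$-length at most $D$. Because the $H_\mu$-edges of $p$ are not $H_\lambda$-edges, any $H_\lambda$-component of $\tilde p$ can only live within and across consecutive $Y$-replacement geodesics; hence by the triangle inequality for $\widehat d_\lambda^X$ its length is bounded by $rD$. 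Replacing each such component by a $\lambda$-admissible path in $\G$ of length at most $rD$ (which exists by the very definition of $\widehat d_\lambda^X$) produces a $\lambda$-admissible path in $\G$ from $1$ to $h$. Combining this with the iterated form of Lemma \ref{HO}, which gives $|S(1,h;D)|\le n\cdot c + 2(n-1)$ for a constant $c$ depending only on $|\Lambda|$ (since the edges of $p$ contribute no $\lambda$-coset to $S(v_{i-1},v_i;D)$), I would then bootstrap from local finiteness of $(H_\lambda,\widehat d_\lambda^X)$ (which holds because $\Hl\h(G,X)$) to local finiteness of $(H_\lambda,\dl)$ with respect to $Y$.

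The main obstacle is that the $\G$-geodesics used to replace individual $Y$-edges may be arbitrarily long in the metric $\dxh$, so one cannot directly bound $\widehat d_\lambda^X(1,h)$ by a function of $r$ using a naive length comparison. The point is that it is not the total length of $\tilde p$ but the controlled structure of its $H_\lambda$-components, together with the bound on separating cosets from Lemma \ref{HO}, that ultimately forces $h$ to belong to a finite subset of $H_\lambda$; I would expect the argument to parallel (with appropriate modifications to handle multiple subgroups $H_\lambda$ simultaneously) the strategy used for the single-subgroup case in \cite[Section 3]{HO}.
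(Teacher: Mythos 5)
Your proposal correctly identifies the starting point (condition (a) is handled by Lemma \ref{hypGy}; the content is local finiteness of the new relative metric) and the right first move: replace each $Y$-edge of an admissible path in $\Gy$ by a geodesic $p_e$ in $\G$, and observe that $S(e_-,e_+;D)=\emptyset$ forces every component of $p_e$ to have relative length at most $D$. Up to that point you are on the paper's track. But the proposal then stalls at exactly the place you flag as ``the main obstacle,'' and the way you propose to get past it does not work. Bounding the relative length of each $H_\lambda$-component of the lifted path by $rD$ and replacing these components by admissible paths of length at most $rD$ produces an admissible path in $\G$ whose \emph{total} length is still controlled by the lengths of the geodesics $p_e$, which are unbounded; so you get no bound on $\dl^X(1,h)$, and hence no finiteness. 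The appeal to Lemma \ref{HO} cannot rescue this: a bound of the form $|S(1,h;D)|\le 3r$ holds for infinitely many $h\in H_\lambda$ and does not confine $h$ to a finite set, so counting separating cosets is not a substitute for bounding a relative distance. (A smaller inaccuracy: admissibility of a path only excludes edges of the subgraph $\Gamma(H_\lambda,H_\lambda)$, i.e.\ $H_\lambda$-edges joining two vertices of the coset $H_\lambda$ itself; it does not force every non-$Y$ edge to be labelled by some $H_\mu$ with $\mu\ne\lambda$.)

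The idea you are missing is the paper's enlargement of the generating set. Set $Z_\lambda=\{h\in H_\lambda\mid \dl^X(1,h)\le D\}$, which is finite by local finiteness of $(H_\lambda,\dl^X)$, and $Z=\bigl(\bigcup_{\lambda}Z_\lambda\bigr)\cup X$; since $|\Lambda|<\infty$, Lemma \ref{X1X2} gives $\Hl\h(G,Z)$. In $\Gz$ every component of a replacement geodesic $p_e$ can be traded for an edge labelled by a letter of $Z$, so each $p_e$ becomes a geodesic $q_e$ whose label is a word in $Z$ and which carries no $\mathcal H$-components at all. An admissible path of length $n$ in $\Gy$ from $1$ to $h$ then lifts to a concatenation $t$ of at most $n$ geodesics in $\Gz$ whose only components are the original $\mathcal H$-edges $s_i$, none of which lies in $\Gamma(H_\lambda,H_\lambda)$; hence the edge $f$ labelled $h^{-1}$ is isolated in the geodesic $(n+1)$-gon $tf$, and Lemma \ref{C} yields $\dl^Z(1,h)\le C^Z(n+1)\le 2C^Z\,\dl^Y(1,h)$. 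Local finiteness of $(H_\lambda,\dl^Y)$ then follows from that of $(H_\lambda,\dl^Z)$. Without this (or an equivalent device for converting the unboundedly long lifted path into a polygon with a uniformly bounded number of quasi-geodesic sides in which the edge labelled $h^{-1}$ is isolated), your argument does not close.
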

\begin{proof}
Since we already know that $\Gy$ is hyperbolic, it remains to verify the local finiteness condition (b) from Definition \ref{he-def}. We denote by $\dl^Y$ and $\dl^X$ the relative metrics on $H_\lambda $ defined using the graphs $\Gy$ and $\G$, respectively. In this notation, we have to verify that $(H_\lambda, \dl^Y)$ is locally finite for every $\lambda\in \Lambda$.

To simplify the proof, we will use the following trick. Let $$Z_\lambda =\{ h\in H_\lambda \mid \dl ^X(1,h)\le D\}$$ and
$$
Z=\left(\bigcup_{\lambda\in \Lambda}Z_\lambda \right) \cup X.
$$
Since every $(H_\lambda , \dl^X)$ is a locally finite space, every $Z_\lambda $ is finite. And since we assume that $|\Lambda |<\infty $, $|Z\vartriangle X|=|Z\setminus X|<\infty $. Hence $\Hl \h (G,Z) $ by Lemma \ref{X1X2}. We will denote by $\dl^Z$ the corresponding relative metric on $H_\lambda$.

Observe that the restriction of the natural inclusion $\G\to \Gz$ to the set of vertices is an isometry. Indeed it is clear from the definition of $Z$ that two vertices in $\Gz$ are connected by an edge if and only if they are connected by an edge in $\G$. However, some edges of $\G$ are doubled in $\Gz$. Indeed for every edge in $\G$ labelled by $h\in H_\lambda$ such that $\dl ^X(1,h)\le D$, there are two edges in $\Gz$ with the same endpoints: one labelled by $h\in H_\lambda \subseteq \mathcal H$ and the other labelled by the corresponding element of $Z$ (this is due to disjointness of the union $Z\sqcup \mathcal H$ in $\Gz$, see Remark \ref{disj}).

Below we will use the following construction. Let $e$ be an edge in $\Gy$ labelled by a letter from $Y$. Take a geodesic path $p_e$ in $\G$ connecting $e_-$ and $e_+$. Since $\Lab (e)\in Y$, we have $S(e_-, e_+;D)=\emptyset$. Hence $p_e$ does not essentially penetrate any coset of any $H_\lambda$ and hence every $H_\lambda$-component $c$ of $p_e$ satisfies $\dl ^X(c_-, c_+)\le D$. (Note that $c$ is a single edge as $p_e$ is geodesic.) Therefore, in $\Gz$ there is an edge labelled by a letter of $Z$ that connects $c_-$ to $c_+$. Thus passing to $\Gz$ and replacing all components of $p_e$ with the corresponding edges labelled by letters from $Z$, we obtain a path $q_e$ with the same endpoints as $e$ such that $\Lab (q_e)$ is a word in the alphabet $Z$. Since $\ell (q_e)=\ell(p_e)$ and inclusion $\G\to \Gz$ is isometric on the vertex set, $q_e$ is geodesic in $\Gz$.

Suppose now that an element $h\in H_\lambda $ satisfies $\dl ^Y(1,h)= n<\infty $ and let $r$ be the corresponding admissible path (see Definition \ref{dl}) of length $n$ in $\Gy $ that connects $1$ to $h$. Let $r=r_0s_1\ldots r_{m-1}s_mr_m$, where $s_1, \ldots, s_m$ are edges labelled by letters from $\mathcal H$ and $r_0, \ldots , r_m$ are paths (possibly trivial) labelled by words in the alphabet $Y$. Replacing each edge $e$ of each $r_i$ with the geodesic $q_e$ constructed in the previous paragraph, we obtain a path $t=t_0s_1\ldots t_{m-1}s_mt_m$ in $\Gz$ connecting $1$ to $h$, where $\Lab (t_i)$ is a word in the alphabet $Z$ for all $i=0, \ldots , m$. Note that $t$ is a concatenation of $n$ geodesics (in $\Gz$).

Let $f$ be the edge in $\Gz$ labelled by $h^{-1}\in H_\lambda $ and connecting $h$ to $1$. Obviously $s_1, \ldots , s_m$ are the only components of $t$. Since $r$ was admissible in $\Gy$, no $s_i$ contains edges of $\Gamma (H_\lambda, H_\lambda)$. Hence no $s_i$ can be connected to $f$ and thus $f$ is isolated in the loop $tf$.  Since $t$ consists of at most $n$ geodesic segments, from Lemma \ref{C} we obtain $$\dl^Z (1,h) \le C^Z(n+1)\le 2C^Z n \le 2C^Z \dl^Y (1,h),$$ where $C^Z$ is the corresponding constant for $\Gz$. Now local finiteness of $(H_\lambda, \dl ^Y)$ follows from that of $(H_\lambda, \dl ^Z)$.
\end{proof}

\begin{lem}\label{inY}
Let $f,g\in G$ and let $S(f,g;D)=\{C_1\preceq \ldots \preceq C_n\}$ for some $n\ge 1$. Then the following hold.
\begin{enumerate}
\item[(a)] There  exists $b\in C_1$ such that $f^{-1}b\in Y$.
\item[(b)] For every $1\le i<n$, there exist $a\in C_i$, $b\in C_{i+1}$ such that $a^{-1}b\in Y$.
\item[(c)] There  exists $a\in C_n$ such that $a^{-1}g\in Y$.
\end{enumerate}
\end{lem}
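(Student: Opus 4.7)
The plan is to fix a geodesic $p$ from $f$ to $g$ in $\G$ and to exploit its canonical decomposition $p = p_1 a_1 p_2 \cdots a_n p_{n+1}$ from Lemma \ref{sep-ref}, where $a_i$ is the component of $p$ corresponding to $C_i$. Set $b := (a_1)_-$ in (a), $a := (a_n)_+$ in (c), and $a := (a_i)_+$, $b := (a_{i+1})_-$ in (b); clearly $b\in C_1$, $a\in C_n$, and $a\in C_i$, $b\in C_{i+1}$, respectively. Since $G$ acts on $\G$ by isometries, $S(f,b;D)=\emptyset$ is equivalent to $f^{-1}b\in Y$, and analogous equivalences hold in the other two cases, so it suffices to show that the relevant separating set is empty.

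Suppose for contradiction that some coset $C$ lies in this set, and let $q$ be a geodesic between the designated endpoints that essentially penetrates $C$. The corresponding piece of $p$ (namely $p_1$, $p_{n+1}$, or $p_{i+1}$) is already a geodesic between the same endpoints, so $\ell(q)$ equals its length. Splicing $q$ into $p$ in place of that piece yields a path $\widetilde p$ from $f$ to $g$ of length $\ell(p)=\dxh(f,g)$, hence a new geodesic, and $\widetilde p$ still essentially penetrates $C$. By Lemma \ref{sep-ref}, $C\in\{C_1,\ldots,C_n\}$, and the position of the insertion within $\widetilde p$ together with the linear order $\preceq$ forces $C=C_1$ in (a), $C=C_n$ in (c), and $C\in\{C_i,C_{i+1}\}$ in (b).

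To rule out these last possibilities, I would use the uniform observation that every component of a geodesic in $\G$ is a single edge. Indeed, if $\alpha$ is the $xH_\lambda$-component of a geodesic from $u$ to $v$ decomposed as $p_1'\alpha p_2'$, then Lemma \ref{dist} applied to the path and to its reverse yields $\ell(p_1')=\dxh(u,xH_\lambda)$ and $\ell(p_2')=\dxh(xH_\lambda, v)$, while the bound $\dxh(u,v)\le \dxh(u,xH_\lambda)+1+\dxh(xH_\lambda,v)$ (any two elements of $xH_\lambda$ are joined by an $H_\lambda$-edge) forces $\ell(\alpha)=1$. Applying Lemma \ref{dist} to $q$ itself, the $C$-component of $q$ is pinned to the endpoint of $q$ that lies in $C$: it ends at $b$ when $C=C_1$ or $C=C_{i+1}$, and starts at $a$ when $C=C_n$ or $C=C_i$. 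Inside $\widetilde p$, this $C$-component is therefore adjacent to the neighboring $a_j$, and the two edges together form a length-two subpath whose labels all lie in the same $H_\lambda$; by maximality of the original components of $p$ and of $q$ this subpath is itself a component of $\widetilde p$, contradicting the length-one observation.

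The main technical hurdle is only bookkeeping: in case (b) one must treat the symmetric subcases $C=C_i$ and $C=C_{i+1}$ separately, and in each of the three cases verify that the edges of $\widetilde p$ immediately flanking the new length-two subpath are not themselves $H_\lambda$-edges in the coset $C$, so that the subpath is genuinely a component of $\widetilde p$. Both checks follow directly from the maximality of the components of $p$ and $q$.
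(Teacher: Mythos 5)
Your proof is correct and shares the paper's overall skeleton --- assume a coset $C$ separates the chosen points, splice a geodesic essentially penetrating $C$ into a geodesic from $f$ to $g$, use Lemma \ref{sep-ref} together with the fact that a geodesic penetrates each coset at most once to force $C\in\{C_i,C_{i+1}\}$, then derive a contradiction --- but the two implementations differ in both the choice of witnesses and the endgame. The paper takes $a\in C_i$, $b\in C_{i+1}$ realizing $\dxh(C_i,C_{i+1})$, splices via two auxiliary $\mathcal H$-edges into an arbitrary geodesic from $f$ to $g$ (geodesicity of the spliced path rests on that minimality), and obtains its final contradiction again from minimality: the offending component is forced to start at $a$ or end at $b$, yielding a strictly shorter connection between the two cosets. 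You instead take the transition points $(a_i)_+$ and $(a_{i+1})_-$ of a fixed geodesic $p$, so the splice needs no connecting edges and is trivially geodesic, and your final contradiction is the (correct) observation that every component of a geodesic in $\G$ is a single edge, violated once Lemma \ref{dist} pins the offending component against $a_i$ or $a_{i+1}$. Your choice of witnesses makes the bookkeeping lighter. One point of order: the claim that $\widetilde p$ ``still essentially penetrates $C$'' presupposes that the $C$-component $d$ of $q$ survives as a component of $\widetilde p$; if $d$ abuts $a_i$ or $a_{i+1}$ and merges with it, the merged component has different endpoints and the bound $\dl>D$ is not automatic. But in that merged case $\widetilde p$ already contains a length-two $H_\lambda$-subpath of a geodesic, which is your contradiction anyway, so the argument closes either way --- it would just read more cleanly if you disposed of the merged case first and only then invoked $C\in S(f,g;D)$.
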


\begin{figure}
  % Requires \usepackage{graphicx}
 \centering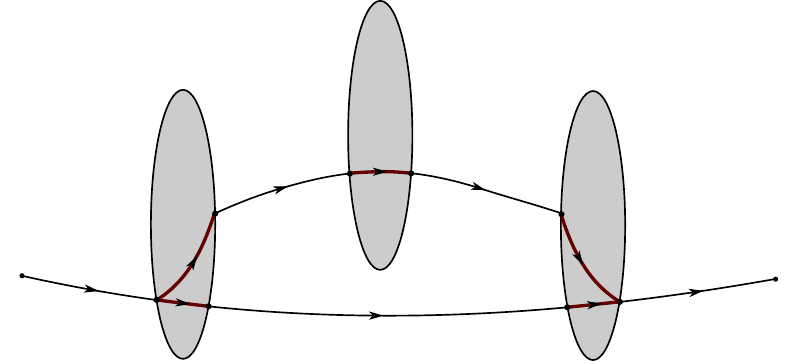\\
  \caption{}\label{fig06}
\end{figure}

\begin{proof}
We will only prove part (b); the proofs of (a) and (c) are the same (after obvious adjustments). Assume that $C_i$ and $C_{i+1}$ are cosets of $H_\lambda $ and $H_\mu$, respectively. We consider any elements $a\in C_i$, $b\in C_{i+1}$ such that
\begin{equation}\label{dab}
\dxh(a,b)=\dxh(C_i, C_{i+1}).
\end{equation}
Assume that $S(a,b;D)\ne \emptyset$. Let $p$ be a geodesic in $\G$ connecting $a$ to $b$ that essentially penetrates a coset $B\in S(a,b;D)$. Thus $p=p_1dp_2$, where $p_1, p_2$ are non-empty and $d$ is the component corresponding to $B$  (see Fig. \ref{fig06}). Consider any geodesic $q$ connecting $f$ to $g$ in $\G$. By Lemma \ref{sep-ref}, $q$ penetrates $C_i$ and $C_{i+1}$ and decomposes as $q=q_1e_1q_2e_2q_3$, where $e_1$, $e_2$ are components corresponding to $C_i$ and $C_{i+1}$. Let $f_1$ (respectively, $f_2$) be an edge connecting $(e_1)_-$ to $a$ (respectively, $b$ to $(e_2)_+$) and let $r=q_1f_1pf_2q_3$. By(\ref{dab}) we have $\ell (p)\le \ell (q_2)$. Hence $\ell (r)\le \ell (q)$. In particular, $r$ is a geodesic between $f$ and $g$ that essentially penetrates $B$. By Lemma \ref{sep-ref}, we have $B=C_i$ or $B=C_{i+1}$. If $B=C_i$, then $p_1$ is trivial and hence $p$ starts with an $H_\lambda $-component $d$. However this contradicts (\ref{dab}) since $d_+\in C_i$ in this case and $$\dxh (C_i, C_{i+1})\le\dxh (d_+, b)<\ell(p) =\dxh (a,b).$$ Similarly we get a contradiction if $B=C_{i+1}$.
\end{proof}

The next result relates the distance in $\Gy$ to the number of separating cosets between two points.

\begin{lem}\label{dS}
For any two elements $f,g\in G$, we have $$\frac12(\dyh (f,g)-1) \le |S(f,g;D)|\le 3\dyh (f,g) $$
\end{lem}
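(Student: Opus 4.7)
The plan is to prove the two inequalities separately: the upper bound by iterating Lemma~\ref{HO} along a geodesic in $\Gy$, and the lower bound by explicitly constructing a path in $\Gy$ using the elements furnished by Lemma~\ref{inY}.

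For the upper bound, I would fix a geodesic $f = v_0, v_1, \ldots, v_n = g$ in $\Gy$ with $n = \dyh(f,g)$ and first verify the key sub-claim that $|S(v_{i-1}, v_i; D)| \le 1$ for every $i$. Indeed, the edge from $v_{i-1}$ to $v_i$ is labeled either by some $y \in Y$ or by some $h \in H_\lambda$. In the first case, $S(1,y;D) = \emptyset$ by the definition (\ref{Y}) of $Y$, and left-translation by $v_{i-1}$ gives $S(v_{i-1},v_i; D)=\emptyset$. In the second case, the single $H_\lambda$-edge is itself a geodesic in $\G$ connecting $v_{i-1}$ to $v_i$, and by Lemma~\ref{sep-ref} every element of $S(v_{i-1},v_i;D)$ must be penetrated by this geodesic, so the set contains at most one coset (namely $v_{i-1}H_\lambda$). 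Iterating Lemma~\ref{HO} along the points $f, v_1, \ldots, v_{n-1}, g$ then gives
$$|S(f,g;D)| \le \sum_{i=1}^{n} |S(v_{i-1}, v_i; D)| + 2(n-1) \le n + 2(n-1) = 3n - 2 \le 3\dyh(f,g).$$

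For the lower bound, let $S(f,g;D) = \{C_1 \preceq C_2 \preceq \ldots \preceq C_n\}$, so that each $C_i$ is a coset of some $H_{\lambda_i}$. Applying Lemma~\ref{inY}, I can choose elements $b_1 \in C_1$ with $f^{-1}b_1 \in Y$; for each $1 \le i < n$, elements $a_i \in C_i$ and $b_{i+1} \in C_{i+1}$ with $a_i^{-1}b_{i+1} \in Y$; and an element $a_n \in C_n$ with $a_n^{-1}g \in Y$. Since $a_i, b_i \in C_i$ we have $b_i^{-1}a_i \in H_{\lambda_i}$, so in $\Gy$ there is an $\mathcal{H}$-edge connecting $b_i$ to $a_i$. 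Concatenating yields a path
$$f \to b_1 \to a_1 \to b_2 \to a_2 \to \cdots \to b_n \to a_n \to g$$
in $\Gy$, made of $n+1$ $Y$-edges and $n$ $\mathcal{H}$-edges, with total length $2n+1$. Hence $\dyh(f,g) \le 2n+1 = 2|S(f,g;D)|+1$, which rearranges to $|S(f,g;D)| \ge \frac12(\dyh(f,g)-1)$.

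I do not anticipate a major obstacle here, as both bounds follow fairly directly from results already established in this section; the only thing worth double-checking is the sub-claim that edges of $\Gy$ contribute at most one separating coset, which is what forces the constant $3$ (rather than $1$) in the upper bound through the iteration of Lemma~\ref{HO}.
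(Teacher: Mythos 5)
Your proof is correct and follows essentially the same route as the paper: the upper bound by iterating Lemma \ref{HO} along a $\dyh$-geodesic together with the observation that a single $Y$- or $\mathcal H$-edge contributes at most one separating coset, and the lower bound by concatenating the $Y$-edges from Lemma \ref{inY} with $\mathcal H$-edges inside each coset to get a path of length $2|S(f,g;D)|+1$. Your version is in fact slightly more explicit (and marginally sharper, giving $3n-2$) than the paper's two-line argument.
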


\begin{proof}
The left inequality immediately follows from Lemma \ref{inY}. The right inequality follows Lemma \ref{HO} by induction. Indeed if $\dyh(f,g)=0$, the statement is obvious. If $\dyh(f,g)>0$, then there exists $h\in G$ such that $\dyh(f,h)=\dyh(f,g)-1$ and either $h^{-1}g\in Y$ or $h^{-1}g\in \mathcal H$; in both cases we have $|S(h,g;D)|\le 1$. By induction and Lemma \ref{HO}, we obtain
$$
\begin{array}{rcl}
|S(f,g;D)|& \le & |S(f,h;D)|+|S(h,g;D)| +2 \\ && \\ & \le & 3\dyh(f,h) +3  \\ && \\ & = & 3(\dyh(f,g)-1)+3 \\ && \\ & = & 3\dyh (f,g).
\end{array}
$$
\end{proof}

\begin{figure}
  % Requires \usepackage{graphicx}
 \centering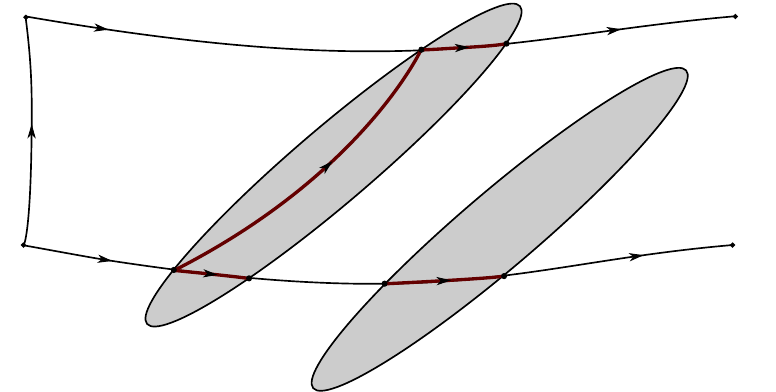\\
  \caption{}\label{fig04}
\end{figure}

\begin{lem}\label{acylGy}
$\Gy$ is acylindrical.
\end{lem}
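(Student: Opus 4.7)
}

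I will verify acylindricity of $\Gy$ in the Cayley-graph form recalled at the start of this section: for each $\varepsilon>0$, I must exhibit $R,N$ such that whenever $|h|_{Y\cup\mathcal H}\ge R$,
the set $A_h=\{g\in G\mid |g|_{Y\cup\mathcal H}\le \varepsilon,\ |h^{-1}gh|_{Y\cup\mathcal H}\le\varepsilon\}$ has cardinality at most $N$. The idea is to translate the problem into separating-coset combinatorics via Lemma \ref{dS}, use Lemma \ref{HO} to show that $g$ almost preserves the set $S(1,h;D)$, and then use local finiteness of the relative metric $\dl^Y$ provided by Lemma \ref{he} to finitely count the resulting conjugations.

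\textbf{Step 1 (combinatorics of separating cosets).} Set $S=S(1,h;D)$ and $n=|S|$. Lemma \ref{dS} gives $n\ge (R-1)/2$ when $R$ is large, and for any $g\in A_h$ yields $|S(1,g;D)|\le 3\varepsilon$ and $|S(h,gh;D)|=|S(1,h^{-1}gh;D)|\le 3\varepsilon$. Since the action of $G$ on cosets is by left multiplication, $S(g,gh;D)=g\cdot S$. Applying Lemma \ref{HO} first to the triple $(1,g,h)$ and then to $(g,h,gh)$ and chaining the resulting inclusions I obtain
\[
|S\setminus gS|\le 6\varepsilon+4,\qquad |T|\ge n-6\varepsilon-4, \qquad \text{where } T:=S\cap gS.
\]

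\textbf{Step 2 (canonical representatives and the conjugation equation).} Using Lemma \ref{inY} recursively I pick entry/exit vertices $b_k,a_k\in C_k$ for each $C_k\in S$ so that $a_k^{-1}b_{k+1}\in Y$, $b_1\in Y\cdot\{1\}$, and $a_n^{-1}h\in Y$. This yields a distinguished $\Gy$-path $\pi$ from $1$ to $h$ of length $\le 2n+1$, alternating $Y$-edges (between consecutive cosets of $S$) with single $H_\lambda$-edges (within the $C_k$'s). For each $C=C_i=gC_j\in T$ the two vertices $b_i$ and $gb_j$ lie in the same coset $C$, so $h_{ij}:=b_i^{-1}gb_j\in H_{\lambda(C)}$ and $g=b_i\,h_{ij}\,b_j^{-1}$. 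Bounding the number of elements $g$ thus reduces to bounding $(i,j,h_{ij})$ over $g\in A_h$.

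\textbf{Step 3 (main geometric step: $\dl^Y(1,h_{ij})$ is bounded).} I argue that for a suitable choice of $C\in T$, the element $h_{ij}$ has bounded $\dl^Y$-norm. The construction is to concatenate three admissible paths in $\Gy$ between $b_i$ and $gb_j$: (i) the reversed prefix of $\pi$ from $b_i$ to $1$, which uses $H_\lambda$-edges only inside $C_k$ for $k<i$; (ii) a short $\Gy$-path of length $\le\varepsilon$ from $1$ to $g$; (iii) the $g$-translated prefix of $\pi$ from $g$ to $gb_j$, which by injectivity of left multiplication on cosets uses $H_\lambda$-edges only inside $gC_k$ for $k<j$, none of which equals $C_i=gC_j$. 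The concatenation avoids $\Gamma(H_\lambda,H_\lambda)$-edges inside the coset $C_i$ provided the short middle piece (ii) does not use an $H_\lambda$-edge in $C_i$; if it does, this single edge (labeled by some $h\in H_\lambda$ with both endpoints in $C_i$) can be rerouted because by definition $h\in Y\cup\mathcal H$, and the $Y$-admissibility condition $S(1,y;D)=\emptyset$ forces any such detour to have $\dl^Y$-length bounded in terms of $\varepsilon$. This concatenation has length roughly $2i-1+\varepsilon+2j-1$; to make it bounded independently of $h$, I use hyperbolicity of $\Gy$ (Lemma \ref{hypGy}) to observe that the $\Gy$-quadrilateral $(1,h,gh,g)$ is $\delta_Y$-thin with two opposite short sides of length $\le\varepsilon$, so the two long sides $\pi$ and $g\pi$ fellow-travel in the middle. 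A standard fellow-traveler argument at the height of $b_i$ along $\pi$ yields a height $2j-1$ along $g\pi$ with $|i-j|\le K_1(\varepsilon)$, and combining with the short middle piece produces an admissible $\Gy$-path from $b_i$ to $gb_j$ of length $\le K_0(\varepsilon)$. Thus $\dl^Y(1,h_{ij})\le K_0(\varepsilon)$.

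\textbf{Step 4 (finite counting).} By Lemma \ref{he}, $(H_\lambda,\dl^Y)$ is locally finite, so the number of elements of $H_\lambda$ of $\dl^Y$-norm at most $K_0$ is bounded by some $N_0=N_0(\varepsilon)$. By Step 1, some $C_{i(g)}\in T$ has index $i(g)\le 6\varepsilon+5$; picking the minimal such and using Step 3 forces $j(g)\le i(g)+K_1(\varepsilon)$. Thus $(i(g),j(g),\lambda(C),h_{i(g)j(g)})$ ranges over a set of cardinality $\le (6\varepsilon+5)(K_1+1)\,|\Lambda|\,N_0$, bounding $|A_h|$ uniformly.

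\textbf{Main obstacle.} The delicate step is Step 3: the hyperbolic fellow-travel in $\Gy$ gives $\dyh$-closeness, but one must upgrade this to closeness in the restrictive relative metric $\dl^Y$, which forbids the specific $H_\lambda$-edges inside $C_i$. The key is that the short middle piece from $1$ to $g$ has length $\le\varepsilon$, so it crosses $C_i$ through at most $\varepsilon$ $H_\lambda$-edges, and each can be replaced by a controlled $Y$-detour because the defining property of $Y$ — vanishing of $S(1,y;D)$ — prevents any $Y$-edge from forcing an essential coset penetration.
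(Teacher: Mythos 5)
Your Steps 1, 2 and 4 are sound and parallel the paper's strategy (the paper likewise uses Lemma \ref{HO} twice together with Lemma \ref{dS} to produce a coset lying in both $S(1,h;D)$ and its translate, and then counts the short element as a product of an entrance point, an $H_\lambda$-letter of bounded relative length, and another entrance point). The genuine gap is Step 3, and you have correctly identified it as the delicate point but not actually closed it. Fellow-traveling of $\pi$ and $g\pi$ in the hyperbolic graph $\Gy$ only gives you that $b_i$ and $gb_j$ are close in $\dyh$; it gives a short path in $\Gy$, with no control over whether that path uses edges of $b_i\Gamma(H_\lambda,H_\lambda)$. Your proposed repair --- rerouting each $H_\lambda$-edge of the short middle piece that lies inside $C_i$ by a ``controlled $Y$-detour'' justified by $S(1,y;D)=\emptyset$ for $y\in Y$ --- does not work: the condition $S(1,y;D)=\emptyset$ constrains components of geodesics in $\Gamma(G,X\sqcup\mathcal H)$ from $1$ to $y$, and says nothing about replacing an arbitrary $H_\lambda$-edge with endpoints in $C_i$ by a bounded admissible path. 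If such bounded detours always existed, $\dl^Y$ would be bounded on $H_\lambda$ and every $H_\lambda$ would be finite. Upgrading $\dyh$-closeness to $\dl^Y$-closeness is exactly the content that needs a new idea, and hyperbolicity of $\Gy$ alone cannot supply it.

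The paper closes this gap by a different mechanism that you should note. First, via Lemma \ref{=R} it suffices to treat elements $h$ with $|h|_{Y\cup\mathcal H}$ \emph{exactly} equal to a fixed constant ($18\e+11$), so $|S(1,h;D)|$ is bounded by Lemma \ref{dS} and the entrance points can be counted directly, with no need for your index-bounding and fellow-traveler step. Second, and crucially, the bounded relative length of the $H_\lambda$-letter $u^{-1}v$ is obtained by working with geodesics $p$, $q=gp$ and $s$ (from $1$ to the short element) in the \emph{original} graph $\Gamma(G,X\sqcup\mathcal H)$, forming the loop $p_1eq_1^{-1}s^{-1}$, and applying Lemma \ref{C} to the edge $e$: either $e$ is isolated, giving $\dl(u,v)\le 4C$, or $e$ is connected to a component $d$ of $s$, and the fact that the chosen coset does \emph{not} lie in $S(1,f;D)$ forces $\dl(d_-,d_+)\le D$, giving $\dl(u,v)\le 6C+D$. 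It is this use of Lemma \ref{C} on a genuine geodesic polygon in $\G$, rather than thinness of $\Gy$, that converts coarse closeness into a bound in the relative metric. Without an argument of this kind your Step 3, and hence the whole proof, does not go through.
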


\begin{proof}
We use the equivalent definition of acylindricity given by  Lemma \ref{=R}. Fix any $\e>0$; without loss of generality we can assume $\e\in \mathbb N$. Let $g$ be an element of $G$ such that
\begin{equation}\label{R}
|g|_{Y\cup\mathcal H}= 18\e +11.
\end{equation}
Suppose that
\begin{equation}\label{e-close}
\max\{ |f|_{Y\cup\mathcal H}, |g^{-1}fg|_{Y\cup\mathcal H}\} \le \e
\end{equation}
for some $f\in G$.

We first show that
\begin{equation}\label{acyl1}
|S(1,g;D)\cap S(f, fg;D)|\ge 3\e +1.
\end{equation}
Indeed suppose that $|S(1,g;D)\cap S(f, fg;D)|< 3\e +1$. Applying Lemma \ref{HO} twice we obtain a decomposition $S(1,g; D)= S_1\cup S_2\cup S_3\cup F$, where $S_1\subseteq S(1,f;D)$, $S_2\subseteq S(f, fg;D)$, $S_3\subseteq S(fg,g; D)$, and $|F|\le 4$. Using Lemma \ref{dS} we obtain
$$
\begin{array}{rcl}
|S(1,g;D)|& \le &  |S(1,f;D)|+|S(1,g;D)\cap S(f,fg;D)| + |S(g, fg;D)| +4 \\ &&\\
& < & 3\dyh (1,f) +3\e +1+ 3\dyh (fg, g) +4\\ &&\\
& \le & 9\e +5.
\end{array}
$$
Now using Lemma \ref{dS} again, we obtain $|g|_{Y\cup\mathcal H} \le 2|S(1,g;D)|+1< 18\e +11$, which contradicts (\ref{R}). Thus (\ref{acyl1}) holds.

Since $|S(1,f;D)|\le 3\dyh (1,f)\le 3\e$,  there exists a coset $C$ such that $$C\in S(1,g;D)\cap S(f, fg;D)\;\;\; {\rm and} \;\;\; C\notin S(1,f;D).$$ Let $p$ be a geodesic in $\G$ going from $1$ to $g$. Let also $q=fp$; thus $q$ is a geodesic in $\G$ connecting $f$ to $fg$. Since $C\in S(1,g;D)\cap S(f, fg;D)$, by Lemma \ref{sep-ref} $p$ and $q$ decompose as $p=p_1ap_2$ and $q=q_1bq_2$, where $a$ and $b$ are components corresponding to $C$ (Fig. \ref{fig04}). Let $u=a_-$, $v=b_-$. Suppose that $C$ is a coset of $H_\lambda$. Our next goal is to estimate $\dl (u,v)$.

\begin{figure}
  % Requires \usepackage{graphicx}
 \centering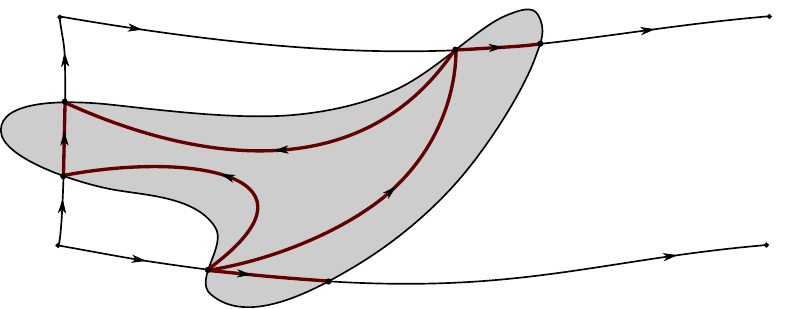\\
  \caption{}\label{fig05}
\end{figure}

As $u, v\in C$, there is an edge $e$ in $\G$ connecting $u$ to $v$ and labelled by an element of $H_\lambda$. We fix any geodesic $s$ connecting $1$ to $f$ in $\G$. It is easy to see that if $u\ne v$, then $e$ is an $H_\lambda$-subpath on the loop $p_1eq_1^{-1}s^{-1}$. We consider two cases. First assume that $e$ is an isolated component of $p_1eq_1^{-1}s^{-1}$. Then by Lemma \ref{C} we obtain $\dl (u,v)\le 4C$. Now suppose that $e$ is not an isolated component of $p_1eq_1^{-1}s^{-1}$. Note that $e$ can not be connected to a component of $p_1$. Indeed then $a$ would be connected to the same component of $p_1$, which contradicts the assumption that $p$ is a geodesic in $\G$. For the same reason $e$ cannot be connected to a component of $q_1$. Thus $e$ is connected to a component $d$ of $s$. Let $s=s_1ds_2$ and let $t_1$, $t_2$ be edges connecting $u$ to $d_-$ and $v$ to $d_+$, respectively, labelled by elements of $H_\lambda$ (Fig. \ref{fig05}). It is easy to see that  $t_1$ and $t_2$ are isolated in the loops $t_1s_1^{-1}p_1$ and $t_2s_2q_1$, respectively. Thus we have $\dl (u, d_-)\le 3C$ and $\dl (v, d_+)\le 3C$ by Lemma \ref{C}. Note also that since $d_{\pm}\in C\notin S(1,f;D)$, we have $\dl (d_-, d_+)\le D$. Therefore,
$$
\dl (u,v)\le \dl (u, d_-) +\dl (d_-, d_+) + \dl (d_+, v) \le 6C +D.
$$
Thus, in any case, we have
\begin{equation}\label{dluv}
\dl (u,v)\le 6C+D\le 3D
\end{equation}
(see (\ref{DC})).

Note that $$f= u\cdot  (u^{-1}v) \cdot (f^{-1}v)^{-1}.$$
Since $C\in S(f, fg;D)$, we have $f^{-1}C \in S(1,g; D)$. Thus both $u$ and $f^{-1}v$ are entrance points of $p$ in certain cosets from $S(1,g;D)$. Hence there are at most $|S(1,g;D)|\le 3|g|_{Y\cup \mathcal H} = 54\e + 33$ possibilities for $u$ and $f^{-1}v$. By (\ref{dluv}) the number of possibilities for $u^{-1}v$ does not exceed the number of elements in a ball of radius $3D$ in $H_\lambda $ with respect to the metric $\dl$. The latter number is finite by the definition of a hyperbolically embedded collection of subgroups. (We also use here our assumption that $|\Lambda|<\infty$. Thus the number of elements $f$ satisfying (\ref{e-close}) is bounded by a uniform constant.
\end{proof}

Lemmas \ref{he} and \ref{acylGy} prove Theorem \ref{XtoY}. To prove Theorem \ref{main} we only need one additional fact.

\begin{lem}\label{ah}
Let $G$ be a group, $H$ a subgroup of $G$, $X$ a subset of $G$. Suppose that $H$ is non-degenerate and $H\h (G,X)$. Then the action of $G$ on $\Gamma (G, X\sqcup H)$ is non-elementary.
\end{lem}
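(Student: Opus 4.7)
The plan is to apply Theorem \ref{XtoY} to enlarge $X$ to a set $Y\supseteq X$ such that $H\h(G,Y)$ and $G$ acts acylindrically on $\Gy$, to show this acylindrical action is non-elementary, and then to transfer the conclusion to $\G$ using the inequality $\dxh\ge \dyh$.

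To show non-elementarity on $\Gy$, I would first establish that the action has unbounded orbits. Since $(H,\widehat{d}^Y)$ is locally finite by $H\h(G,Y)$ and $H$ is infinite, $\widehat{d}^Y$ is unbounded on $H$; fix $h\in H$ with $\widehat{d}^Y(1,h)>D$. Since $H$ is proper and $Y\cup\mathcal H$ generates $G$, there exists $g\in Y\setminus H$. For $z_n=(gh)^n$ I would argue that the $n$ cosets $(gh)^j\cdot gH$ (with $0\le j<n$) all lie in $S(1,z_n;D)$, so that Lemma \ref{dS} yields $\dyh(1,z_n)\to\infty$. Theorem \ref{class} then leaves two possibilities for the action on $\Gy$: either $G$ is virtually cyclic with a loxodromic element, or $G$ contains infinitely many independent loxodromic elements. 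The virtually cyclic case is impossible: in that case the infinite subgroup $H$ would have finite index in $G$, so some nontrivial power of a loxodromic element of $G$ would lie in $H$; but every element of $H$ has $\Gy$-orbit of diameter $1$ (via the direct $H$-edge), contradicting unboundedness of loxodromic orbits. Hence the action of $G$ on $\Gy$ is non-elementary.

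Finally, $Y\supseteq X$ gives $\dxh\ge\dyh$, so any loxodromic element on $\Gy$ is loxodromic on $\G$, and its WPD condition transfers from $\Gy$ to $\G$: the set of elements whose action moves two $\G$-far vertices by small $\G$-distance is contained in the analogous set for $\Gy$, which is finite by acylindricity. A Bestvina--Fujiwara ping-pong argument (the single-element content underlying Lemma \ref{wpd-el}) applied to such a loxodromic WPD element on $\G$, combined with the fact that $G$ is not virtually cyclic (shown above), then produces infinitely many independent loxodromic elements on $\G$, making the action non-elementary.

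The principal difficulty is the combinatorial claim that $|S(1,z_n;D)|\ge n$: one must verify that each of the cosets $(gh)^j gH$ is distinct and essentially penetrated by some geodesic from $1$ to $z_n$ in $\Gy$, which requires the choice $\widehat{d}^Y(1,h)>D$ together with the structural results from Section 4 (in particular Lemmas \ref{sep}--\ref{HO}) to preclude a geodesic ``short-circuiting'' through fewer cosets.
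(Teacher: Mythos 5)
Your overall architecture is workable, but it has a genuine gap exactly where you locate ``the principal difficulty'': the claim that $|S(1,(gh)^n;D)|\ge n$, i.e.\ that the cosets $(gh)^jgH$, $0\le j<n$, are pairwise distinct and all $(1,(gh)^n;D)$-separating. This is not a routine consequence of Lemmas \ref{sep}--\ref{HO}: those results describe how separating cosets behave once a geodesic is known to essentially penetrate them, whereas here you must first show that the alternating path labelled $g,h,g,h,\dots$ is (close to) geodesic in $\Gy$ and that its $H$-components are isolated. Already the distinctness of the cosets needs an input such as almost malnormality of $H$ (Lemma \ref{maln}); for instance, if $ghg\in H$ then $(gh)^2gH=gH$. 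Worse, a naive application of Lemma \ref{C} to the polygon bounded by the subpath of $p$ between two allegedly connected components yields a bound of the form $Cn$, which grows with $n$ and therefore does not contradict $\widehat{\rm d}(1,h)>D$ for a fixed $D$; one needs the more delicate induction carried out in \cite[Corollary 6.12]{DGO}. In other words, the step you defer is precisely the external result the paper invokes, so as written your argument assumes what amounts to the existence of a loxodromic element --- the heart of the statement.

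For comparison, the paper's proof is two citations and never passes to $\Gy$: non-degeneracy of $H$ gives an element acting loxodromically on $\G$ by \cite[Corollary 6.12]{DGO} and a non-abelian free subgroup by \cite[Theorem 6.14]{DGO}, whence $G$ is not virtually cyclic and the classification theorem gives non-elementarity. If you may cite those results of \cite{DGO}, your detour through Theorem \ref{XtoY}, acylindricity of $\Gy$, Theorem \ref{class}, and the WPD transfer back to $\G$ is unnecessary; if you may not, then the missing combinatorial lemma is the entire content and must be proved in full. Two smaller points: Lemma \ref{wpd-el} as stated assumes every loxodromic element is WPD, so in the last step you must invoke the single-element version of \cite[Proposition 6]{BF} directly, as you half-acknowledge; on the other hand, your exclusion of the virtually cyclic case on $\Gy$ is correct, since an infinite subgroup of a virtually cyclic group has finite index while every element of $H$ moves the vertex $1$ by $\dyh$-distance at most $1$.
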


\begin{proof}
Since $H$ is non-degenerate, $G$ contains elements that act loxodromically on $\Gamma (G, X\sqcup H)$ by \cite[Corollary 6.12]{DGO}. Furthermore, by Theorem 6.14 from \cite{DGO} $G$ contains non-abelian free subgroups. In particular, $G$ is not virtually cyclic. Hence by Theorem \ref{class} the action of $G$ is non-elementary.
\end{proof}

\begin{proof}[Proof of Theorem \ref{main}]
Note that for a cobounded action of a group $G$ on a hyperbolic space $S$, we have $\Lambda (G)=\partial S$. Thus non-elementarity of the action is equivalent to $|\partial S|>2$. Now it is clear that (AH$_1$) implies  (AH$_2$). The implication (AH$_2$) $\Longrightarrow$  (AH$_3$) is obvious, and (AH$_3$) $\Longrightarrow$  (AH$_4$) was proved in \cite[Theorem 6.8]{DGO}.

It remains to show that (AH$_4$) $\Longrightarrow$  (AH$_1$). Let $H$ be a non-degenerate hyperbolically embedded subgroup of a group $G$. By Theorem \ref{XtoY} we can find a relative generating set $X$ of $G$ with respect to $H$ such that $H\h (G,X)$ and $\Gamma (G, X\sqcup H)$ is acylindrical. Since $H$ is non-degenerate, the action of $G$ on $\Gamma (G, X\sqcup H)$ is non-elementary by Lemma \ref{ah} and hence $|\partial \Gamma (G, X\sqcup H)|>2$.
\end{proof}

%%%%%%%%%%%%%%%%%%%%%%%%%%%%%%%%%%%%%%%%%%%%%%%%%%%%%%%%%%%%%%%%%%%%%

\section{Loxodromic elements}

%%%%%%%%%%%%%%%%%%%%%%%%%%%%%%%%%%%%%%%%%%%%%%%%%%%%%%%%%%%%%%%%%%%%%

The main goal of this section is to prove Theorem \ref{lox}. Recall that an isometry $g$ of a hyperbolic space $S$ is called \emph{elliptic} if some (equivalently, any) orbit of $g$ is bounded and \emph{loxodromic} if the map $\mathbb Z\to S$ defined by $n\mapsto g^ns$ is a quasi-isometry for some (equivalently, any) $s\in S$.

We will need the following three results. The first one in proved by Bowditch in \cite{Bow}. It also follows from Theorem \ref{class} applied to cyclic subgroups.

\begin{lem}[Bowditch]\label{bow}
Let $G$ be a group acting acylindrically on a hyperbolic space $S$. Then every element of $G$ is either elliptic or loxodromic.
\end{lem}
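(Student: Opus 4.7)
My plan is to derive Lemma \ref{bow} directly from Theorem \ref{class} applied to the cyclic subgroup $\langle g\rangle$, as hinted in the text. The action of $G$ on $S$ restricts to an acylindrical action of $\langle g\rangle$ on $S$ (the acylindricity condition passes to any subgroup, since the quantifier ``there are at most $N$ elements $g\in G$ \ldots'' can only be tightened by restricting to a subgroup). Hence Theorem \ref{class} applies to $\langle g\rangle$ and yields exactly one of the three cases (a), (b), (c).

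Case (c) is immediately ruled out because $\langle g\rangle$ is cyclic and therefore cannot contain even two, let alone infinitely many, independent loxodromic elements. In case (a), every orbit of $\langle g\rangle$ (and in particular every orbit of $g$) is bounded, so $g$ is elliptic by definition. So the only remaining case is (b), in which $\langle g\rangle$ contains some loxodromic element; as every element of $\langle g\rangle$ is a power of $g$, this element has the form $h=g^k$ for some nonzero integer $k$. I must then show that loxodromicity of $g^k$ forces loxodromicity of $g$ itself, which is the only non-formal step.

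For this, fix a basepoint $s\in S$ and use that $n\mapsto h^n s$ is a quasi-isometric embedding, so there exist $\lambda>0$ and $c\ge 0$ with $\d(s,h^n s)\ge \lambda |n|-c$ for all $n\in\mathbb Z$. Write an arbitrary integer $m$ as $m=kq+r$ with $0\le r<|k|$ and set $M=\max_{0\le r<|k|}\d(s,g^r s)$. Then
\[
\d(s,g^m s)\;\ge\;\d(s,h^q s)-\d(h^q s,g^r h^q s)\;=\;\d(s,h^q s)-\d(s,g^r s)\;\ge\;\lambda|q|-c-M,
\]
and since $|q|\ge (|m|-|k|)/|k|$, this gives a linear lower bound $\d(s,g^m s)\ge (\lambda/|k|)|m|-C'$ for a suitable constant $C'$. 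Combined with the obvious upper bound $\d(s,g^m s)\le |m|\cdot\d(s,gs)$, we see that $m\mapsto g^m s$ is a quasi-isometric embedding, i.e.\ $g$ is loxodromic.

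The argument above is essentially forced by the structure of Theorem \ref{class}; the only slightly delicate point is the last passage from ``some power of $g$ is loxodromic'' to ``$g$ is loxodromic,'' and even this is routine once one writes $m=kq+r$. I do not foresee any serious obstacle.
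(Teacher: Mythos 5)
Your proof is correct and follows exactly the route the paper indicates: it cites Bowditch but remarks that the lemma ``follows from Theorem \ref{class} applied to cyclic subgroups,'' which is precisely your argument. You have simply filled in the two details the paper leaves implicit (acylindricity passes to subgroups, and loxodromicity of a power $g^k$ implies loxodromicity of $g$ via the division $m=kq+r$), and both are handled correctly.
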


The next result is obtained in the course of proving Proposition 4.35 in \cite{DGO}.

\begin{lem}[Dahmani-Guirardel-Osin]\label{435}
Being hyperbolically embedded is a transitive relation. More precisely, if $H\h (G,X)$ and $K\h (H,Y)$, then $K\h (G, X\cup Y)$.
\end{lem}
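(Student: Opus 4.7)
The plan is to verify the two conditions of Definition \ref{he-def} for $K\h (G, X\cup Y)$. Generation of $G$ modulo $K$ by $X\cup Y$ is immediate from $G=\langle X\rangle\cdot H$ and $H=\langle Y\cup K\rangle$, so the work splits into hyperbolicity of $\Sigma:=\Gamma(G,(X\cup Y)\sqcup K)$ and local finiteness of the associated relative metric on $K$.

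For hyperbolicity, I would exploit that $\Sigma$ and $\Gamma:=\Gamma(G,X\sqcup H)$ share the vertex set $G$: the graph $\Gamma$ is obtained from $\Sigma$ by coning off each left coset $gH$ via $H$-labelled edges, and conversely each $H$-edge in $\Gamma$ from $g$ to $gh$ is realised in $\Sigma$ by a geodesic of the translated hyperbolic Cayley graph $g\Gamma(H,Y\sqcup K)$ inside the coset $gH$. I would verify Rips's thin-triangle condition in $\Sigma$ by projection-and-lift: given a geodesic triangle in $\Sigma$, collapse each maximal subpath lying within a single coset of $H$ to a single $H$-edge to obtain a triangle in $\Gamma$ whose sides alternate $X$-edges and $H$-edges; thinness in $\Gamma$ (by hypothesis) should then lift back to $\Sigma$ by combining hyperbolicity of each $g\Gamma(H,Y\sqcup K)$ with the isolated-component bound of Lemma \ref{C} provided by $H\h(G,X)$.

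For local finiteness of the relative metric $\widehat d_K$ on $K$ associated to $\Sigma$, take $k_1,k_2\in K$ with $\widehat d_K(k_1,k_2)\le n$ and a shortest admissible path $p$ realising this. Since $Y,K\subseteq H$, decompose $p=q_0 e_1 q_1\cdots e_m q_m$, where each $q_i$ is a (possibly trivial) maximal subpath lying inside a single left coset of $H$ and each $e_j$ is an $X$-edge between distinct cosets. Collapse each $q_i$ to a single $H$-edge labelled $(q_i)_-^{-1}(q_i)_+$; the result $\bar p$ is a path in $\Gamma$ from $k_1$ to $k_2$ of length at most $n$ whose $H$-components are the collapsed $q_i$'s. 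Close $\bar p$ with an $H$-edge labelled $k_2^{-1}k_1\in K\subseteq H$. Applying Lemma \ref{C} to this loop bounds the $\widehat d_H$-displacement of every isolated $H$-component by $O(n)$; local finiteness of $\widehat d_H$ (provided by $H\h(G,X)$) then leaves only finitely many possibilities for the non-trivial cosets visited by $p$, and local finiteness of the relative metric from $K\h(H,Y)$ bounds the remaining possibilities for $k_2$ once the $H$-portion of $p$ is assembled into an admissible path in $\Gamma(H,Y\sqcup K)$, modulo the bounded endpoint adjustments above.

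The principal obstacle is making the hyperbolicity argument rigorous: the relation between $\Sigma$ and $\Gamma$ reverses a coning-off procedure, and the cleanest implementation likely invokes a combination theorem along the lines of Bowditch's theory of fine hyperbolic graphs, with the finiteness side of hyperbolic embedding playing the role of fineness. A secondary technicality in the local-finiteness step is handling non-isolated $H$-components when $p$ revisits a coset; these can be controlled either by a minimality argument on $p$ or by clustering multiple visits in a common coset and applying Lemma \ref{C} iteratively.
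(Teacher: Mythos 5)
First, a point of calibration: the paper does not prove Lemma \ref{435} at all --- it is imported wholesale from \cite[Proposition 4.35]{DGO} (``obtained in the course of proving Proposition 4.35''), so there is no in-paper argument to compare you against. Judged on its own terms, your outline has the right architecture, and it is essentially the architecture of the proof in \cite{DGO}: verify the two clauses of Definition \ref{he-def}, pass between $\Sigma=\Gamma(G,(X\cup Y)\sqcup K)$ and $\Gamma(G,X\sqcup \mathcal H)$ by collapsing maximal coset subpaths to single $\mathcal H$-edges, and control the relative metric by applying Lemma \ref{C} to isolated components. (One observation worth adding: every $x\in X\cap H$ gives an admissible path of length $1$, so local finiteness of $(H,\widehat\d)$ forces $X\cap H$ to be finite; this is what lets you convert the portions of $p$ lying in $H$, which may use $X$-letters representing elements of $H$, into bounded-length words over $Y\cup K$.)

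That said, what you have written is a plan rather than a proof, and the gap sits exactly where you flag it. Hyperbolicity of $\Sigma$ is the hard half of the statement and is not established. The rescue you propose does not work as stated: criteria in the style of Lemma \ref{KR} go in the wrong direction here, since $\Gamma(G,X\sqcup\mathcal H)$ is obtained from $\Sigma$ by \emph{adding} the $\mathcal H$-edges, so you know the larger graph is hyperbolic and must deduce hyperbolicity of the smaller one; and Bowditch-style fineness is not available, because local finiteness of $(H,\widehat\d)$ is a condition on one metric on $H$ and does not make $\Gamma(G,X\sqcup\mathcal H)$ a fine graph. What is actually needed is (i) a proof that the collapsed image of a $\Sigma$-geodesic is a uniform unparametrized quasi-geodesic of $\Gamma(G,X\sqcup\mathcal H)$, and (ii) a bounded-coset-penetration statement: when two sides of a triangle pass through the same coset $gH$, their entry and exit points differ by a uniformly bounded amount in the metric of $g\Gamma(H,Y\sqcup K)$, after which hyperbolicity of $\Gamma(H,Y\sqcup K)$ can be applied inside the coset. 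Welding Lemma \ref{C} to the coset hyperbolicity in this way is the bulk of the DGO argument, and it is missing here. A second, smaller gap: in the local-finiteness step the closing edge labelled $k_2^{-1}k_1$ lies in the coset $H$ itself and is therefore connected to the collapsed components $q_0$ and $q_m$ (and to any component of $\bar p$ that returns to $H$), so Lemma \ref{C} does not apply to it as an isolated component; moreover, bounding $\widehat\d_H$ between the entry and exit points of each excursion is not yet the same as producing an admissible path of bounded length in $\Gamma(H,Y\sqcup K)$ from $k_1$ to $k_2$, which is what local finiteness of $K\h (H,Y)$ actually consumes. These defects are repairable, but repairing them is the content of the proof rather than a technicality around it.
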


The last auxiliary result is a particular case of Lemma 4.11 (b) \cite{DGO}. To state it, we extend the division operation to $\mathbb R_+ \cup \{\infty\} $ in the natural way: $\infty/\infty =1$, $\infty/r=\infty $ and $r/\infty =0$ for every $r\in (0, +\infty )$.

\begin{lem}\label{411}
Let $H\h (G,X)$ and let $\widehat\d $ be the associated relative metric on $H$. Then there exists a finite subset $Y\subseteq H$ such that the word metric $\d_Y$ is Lipschitz equivalent to $\widehat\d$. That is, the ratios $\d _Y(a,b)/\widehat\d(a,b)$ and $\widehat\d(a,b)/\d _Y(a,b)$ are uniformly bounded on $H\times H$ minus the diagonal. In particular, $\d_Y$ is finite if and only if $\widehat\d$ is.
\end{lem}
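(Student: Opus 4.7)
My plan is to set $Y = \{h \in H \setminus \{1\} : \widehat{d}(1, h) \leq R\}$ for a constant $R$ to be determined, using the local finiteness of $(H, \widehat{d})$ from Definition \ref{he-def} to guarantee $|Y| < \infty$. With this choice, $Y$ clearly lies in the subset of $H$ where $\widehat{d}(1, \cdot)$ is finite, so $d_Y(1, h) < \infty$ implies $\widehat{d}(1, h) < \infty$; the reverse implication will follow from the quantitative bounds below.

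The inequality $\widehat{d}(a, b) \leq R \cdot d_Y(a, b)$ is immediate. Given a factorization $a^{-1}b = y_1 y_2 \cdots y_k$ with $y_i \in Y$, the $G$-invariance of $\widehat{d}$ (inherited from left multiplication on $\G$) combined with the triangle inequality gives
\[
\widehat{d}(a, b) \leq \sum_{i=1}^k \widehat{d}(1, y_i) \leq R k = R \cdot d_Y(a, b).
\]

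For the reverse inequality $d_Y(a, b) \leq C \widehat{d}(a, b)$ I would argue by induction on $n = \widehat{d}(a, b)$. The base case $n \leq R$ is immediate, since $a^{-1}b \in Y \cup \{1\}$. For $n > R$, fix a shortest admissible path $p$ of length $n$ from $a$ to $b$ in $\G$. If $p$ passes through an intermediate vertex $c \in H$ distinct from $a$ and $b$, the two subpaths of $p$ furnish admissible paths realizing $\widehat{d}(a, c)$ and $\widehat{d}(c, b)$, with $\widehat{d}(a, c), \widehat{d}(c, b) < n$ and $\widehat{d}(a, c) + \widehat{d}(c, b) \leq n$; the induction hypothesis applied to each piece yields $d_Y(a, b) \leq d_Y(a, c) + d_Y(c, b) \leq C n$.

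The main obstacle is the case in which $p$ visits $H$ only at its endpoints: here no intermediate $H$-vertex is available for splitting, and the natural move of closing $p$ with a single $H$-edge and applying Lemma \ref{C} only yields the circular bound $\widehat{d}(a, b) \leq C(1, 0)(n+1)$. Overcoming this requires a genuinely hyperbolic-geometric argument establishing a uniform upper bound $R_0$ on the length of any shortest admissible path whose interior avoids $H$; once such a bound is in hand, choosing $R \geq R_0$ forces every such configuration into the base case and the induction closes. Such a uniform bound would come from exploiting the hyperbolicity of $\G$ together with the embedding data to show that excursions from $H$ must either return to $H$ quickly or else admit a shorter admissible path that does touch $H$. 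This uniform length estimate is the genuine technical content of the statement and is precisely what is supplied by the cited Lemma 4.11(b) of \cite{DGO}.
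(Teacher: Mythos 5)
There is a genuine gap, and you identify it yourself: the inequality $\d_Y(a,b)\le C\,\widehat\d(a,b)$ is never actually established. Your choice of $Y$ as a ball in $(H,\widehat\d)$ and the easy direction $\widehat\d(a,b)\le R\,\d_Y(a,b)$ (via left-invariance of $\widehat\d$ under $H$ and the triangle inequality) are fine. But the induction for the hard direction only closes if a shortest admissible path whose interior misses $H$ has uniformly bounded length, and you neither prove this auxiliary claim nor give a reason to believe it. In fact it looks false in general: for $G$ hyperbolic relative to an infinite peripheral subgroup $H$ with finite relative generating set $X$, a shortest admissible path from $1$ to a far-away $h\in H$ has length growing with $h$, and nothing forces its interior to revisit $H$ (it can travel in a neighborhood of the coset $H$ using $X$-edges and $\mathcal H$-edges of \emph{other} cosets without ever touching $H$ itself). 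So the case you flag as "the main obstacle" is not a boundary case to be legislated away by enlarging $R$; it is the generic case. Deferring it to "precisely what is supplied by the cited Lemma 4.11(b) of \cite{DGO}" makes the argument circular, since that lemma \emph{is} the statement being proved.

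For calibration: the paper itself gives no proof of this lemma --- it is quoted verbatim as a special case of \cite[Lemma 4.11(b)]{DGO} --- so there is no in-paper argument to compare against, and the statement is legitimately nontrivial. The actual proof in \cite{DGO} does not proceed by splitting a shortest admissible path at interior visits to $H$; it requires a genuinely geometric argument in $\G$ (using hyperbolicity and the isolated-component estimate of Lemma \ref{C} applied to suitably chosen polygons) to decompose an element of small $\widehat\d$-length into a bounded number of factors of uniformly bounded $\widehat\d$-length. If you want to prove the lemma rather than cite it, that is the machinery you would need to develop; as written, your proposal proves only the easy half.
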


Now we are ready to prove the main result of this section.

\begin{proof}[Proof of Theorem \ref{lox}]
Obviously (L$_1$) implies (L$_2$) and (L$_2$) implies (L$_3$). It is also known that (L$_3$) implies (L$_4$), see \cite[Theorem 6.8]{DGO}. It remains to prove that (L$_4$) implies (L$_1$).

Let $E$ be a virtually cyclic subgroup containing $g$ such that $E\h G$. By Theorem \ref{XtoY} there exists a subset $X\subseteq G$ such that $E\h (G,X)$ and $\Gamma (G,X\sqcup E)$ is acylindrical. In particular $G$ is generated by $X\cup E$. Since $E$ is finitely generated,  we can assume that $G$ is generated by $X$ by Lemmas \ref{acyl} and \ref{X1X2} (adding generators of $E$ to $X$ if necessary).

Let $Y$ be a finite generating set of $E$. Since $E$ is hyperbolic we have $\{ 1\} \h (E,Y)$. By Lemma \ref{435} we obtain $\{ 1\} \h (G, X\cup Y)$. In particular, by definition this means that $\Gamma (G,X\cup Y)$ is hyperbolic. Again applying Lemma \ref{acyl} we obtain that $\Gamma (G,X)$ is hyperbolic.

Let us show that $\Gamma (G,X)$ is acylindrical. By Lemma \ref{411} there exists a finite subset $Z\subseteq E$ and a constant $K_0$ such that $\d_Z (a,b)\le K_0\widehat\d(a,b)$ for every $a,b\in E$, where $\widehat \d$ is the relative metric on $E$ associated to the embedding $E\h (G,X)$. Since $X$ generates $G$ and $Z$ is finite, there exists a constant $K$ such that
\begin{equation}\label{dd}
\d _X(a,b)\le K\widehat \d (a,b)
\end{equation}
for every $a,b\in E$. Fix $\e \in \mathbb N$. Since $\Gamma (G, X\sqcup E)$ is acylindrical there exists $R_0=R_0(\e)$ and $N_0=N_0(\e)$ such that for any $g\in G$ of length $|g|_{X\cup E}\ge R_0$, the set $\mathcal F_{\e }^{X\cup E}(g)$ of all $f\in G$ satisfying
$\max \{ |f|_{X\cup E}, |g^{-1}fg|_{X\cup E}\}  \le \e$ has at most $N_0$ elements. Let
\begin{equation}\label{RR0}
R=2CK(\e +1) R_0,
\end{equation}
where $C$ is the constant from Lemma \ref{C}. Without loss of generality we assume that $R$ is an integer.

By Lemma \ref{=R}, it suffices to show that there exists $N>0$ such that the set $\mathcal F_{\e}^X(g)$ of all elements $f$ satisfying
\begin{equation}\label{f}
\max \{ |f|_X, |g^{-1}fg|_X\}  \le \e
\end{equation}
has at most $N$ elements for every $g\in G$ of length
\begin{equation}\label{gX}
|g|_X= R.
\end{equation}
Note that for any $g\in G$ we have $\mathcal F_{\e}^X(g)\subseteq \mathcal F_{\e}^{X\cup E}(g)$ and hence $|\mathcal F_{\e}^X(g)|\le N_0$ whenever $|g|_{X\cup E}\ge R_0$. Thus it suffices to consider the case
\begin{equation}\label{gXE}
|g|_{X\cup E}< R_0.
\end{equation}

\begin{figure}
  % Requires \usepackage{graphicx}
 \centering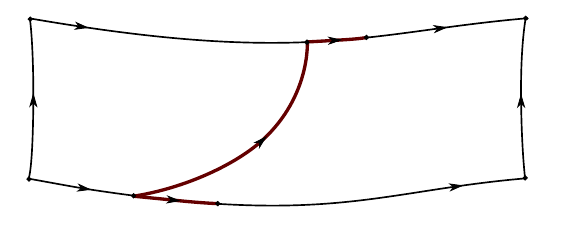\\
  \caption{}\label{fig08}
\end{figure}

Let $p$ be a geodesic in $\Gamma (G, X\sqcup E)$ with $p_-=1$, $p_+=g$. By (\ref{RR0}), (\ref{gX}) and (\ref{gXE}), there exists at least one $E$-component $d$ of $p$ such that $\d_X (d_-, d_+)> 2CK(\e +1)$. Further by (\ref{dd}) we obtain
\begin{equation}\label{de}
\widehat \d(d_-, d_+)> 2C(\e +1).
\end{equation}
Let $q=fp$. Thus $q$ is a geodesic in $\Gamma (G, X\sqcup E)$ such that $q_-=f$, $q_+=fg$, and $\Lab (q)\equiv \Lab(p)$. By (\ref{f}) there exist paths $s_1$ and $s_2$ connecting $1$ to $f$ and $g$ to $fg$, respectively (see Fig. \ref{fig08}), such that $\ell (s_i) \le \e$ and $\Lab (s_i)$ is a word in the alphabet $X$, $i=1,2$. Let $Q=s_1q s_2^{-1}p^{-1}$. We think of $Q$ as a geodesic $(2\e+2)$-gon whose sides are $p$, $q$, and the edges of $s_1$, $s_2$. Lemma \ref{C} and inequality (\ref{de}) imply that $d$ cannot be isolated in $Q$. Since $s_1$, $s_2$ do not contain $E$-components at all and $p$ is geodesic, $d$ must be connected to a component $e$ of $q$.

Let $p=p_1dp_2$, $q=q_1eq_2$, $u=d_-$, $v=e_-$, and let $c$ be the edge of $\Gamma (G, X\sqcup E)$ connecting $d_-$ to $e_-$ and labelled by an element of $E$. Since $p$ and $q$ are geodesic, $c$ is isolated in the loop $r=p_1cq_1^{-1}s_1^{-1}$. We can think of $r$ as a geodesic polygon with sides $p_1$, $c$, $q_1^{-1}$ and the edges of $s_1^{-1}$. Thus $r$ has at most $\e +3$ sides. Hence
\begin{equation}\label{dhuv}
\widehat d (u, v)=\widehat d (c_-, c_+) \le C(\e+3).
\end{equation}
Obviously we have $$f=u \cdot (u^{-1}v) \cdot (f^{-1}v)^{-1}.$$ Since $u$ and $f^{-1}v$ are elements represented by labels of initial subpaths of $p$ (recall that $\Lab (q)\equiv \Lab(p)$), there are at most $\ell(p)=|g|_{X\cup E}<R_0$ possibilities for these elements. Further by (\ref{dhuv}) the number of possible choices for $u^{-1}v$ is bounded by the number $B$ of elements in a ball of radius $C(\e+3)$ in $(E, \widehat d)$; this number is also finite as $(E, \widehat d)$ is locally finite. Therefore, $|\mathcal F_\e^X(g)|\le BR^2_0$ for every $g$ satisfying (\ref{gX}) and (\ref{gXE}). This completes the proof of acylindricity of  $\Gamma (G,X)$.

It remains to show that $g$ acts as a loxodromic element on $\Gamma (G,X)$. By Lemma \ref{bow} it suffices to show that $\langle g\rangle$ is unbounded with respect to $\d_{X}$. Suppose that there exists a constant $A$ such that $|g^n|_{X}\le A$ for every $n\in \mathbb Z$. Then for every $n\in \mathbb Z$ there exists a word $W_n$ in the alphabet $X$ of length at most $A$ representing $g^n$ in $G$. Obviously the path $p_n$ in $\Gamma (G, X\sqcup E)$ labelled by $W_n$ and connecting $1$ to $g^n$ is admissible (see Definition \ref{dl}). Hence $\widehat d(1,g^n)\le \ell(p)\le A$ for every $n$. Since $|g|=\infty$ this contradicts the local finiteness condition from the definition of a hyperbolically embedded subgroup.
\end{proof}

\begin{defn}\label{glox}
We say that an element $g\in G$ is \emph{generalized loxodromic} if it satisfies either of the equivalent conditions from Theorem \ref{lox}.
\end{defn}

The word ``generalized" is used here to distinguish these elements from loxodromic elements for a particular action. Note that, in general, generalized loxodromic elements may be elliptic for some actions. The simplest example is $G=F(x,y)$, the free group of rank $2$. Obviously every non-trivial element of $G$ is generalized loxodromic. However for the action of $G$ on the Bass-Serre tree associated to the free product decomposition $G=\langle x\rangle \ast \langle y\rangle $, all conjugates of powers of $x$ and $y$ are elliptic.

For $G=F_2$ as well as in many other cases, for a group $G$ there exists a {\it single} action on a hyperbolic space with respect to which \emph{all} generalized loxodromic elements are loxodromic. We call such actions \emph{universal}.
\begin{ex}
\begin{enumerate}
\item[(a)] One can show that an element of a mapping class group $MCG(\Sigma _g)$ of a closed surface of genus $g\ge 2$ is generalized loxodromic iff it is pseudo-Anosov. Indeed if $a\in MCG(\Sigma _g)$ is pseudo-Anosov, then it is generalized loxodromic since it acts loxodromically on the curve complex of $\Sigma _g$ and the action is acylindrical \cite{Bow}. On the other hand, it is not hard to prove that if $a$ is reducible, then there exists a positive integer $n$ such that $[C_G(a^n):\langle a^n\rangle]=\infty $; this is impossible if $G$ acts acylindrically on a hyperbolic space and the action of $a$ is loxodromic \cite{Bow}. Thus the action of $MCG(\Sigma _g)$ on the curve complex is universal.

\item[(b)] Any proper and cobounded action of a (hyperbolic) group on a hyperbolic space is universal.
Furthermore, if a group $G$ is relatively hyperbolic with respect to a collection of peripheral subgroups $\Hl$ and peripheral subgroups are not virtually cyclic and not acylindrically hyperbolic, then the action of $G$ on the corresponding  relative Cayley graph $\G$ is universal. This case includes fundamental groups of hyperbolic knot complements or limit groups.
\end{enumerate}
\end{ex}

The universal actions in the above examples are acylindrical. However, there are acylindrically hyperbolic groups that do not admit any universal acylindrical actions. To construct an example of such a group, we need several auxiliary results.

The first one is Lemma 3.3 from \cite[Chapter III.$\Gamma$]{BH}.

\begin{lem}[Bridson-Haefliger]
Let $S$ be a $\delta$-hyperbolic space and let $Y\subseteq S$ be a non-empty bounded subspace. Let $$r_Y=\inf \{ \rho >0 \mid Y\subseteq B(s, \rho) \; {\rm for \; some\;} s\in S\},$$ where $B(s,\rho)$ is the ball of radius $\rho$ centered at $s$. Then for all $\e>0$, the set $$C_\e =\{ s\in S \mid Y\subseteq B(s, r_y+\e)\}$$ has diameter less than $4\delta + 2\e$.
\end{lem}

This lemma immediately implies the following. (The idea is borrowed from the proof of Theorem 3.2 in Chapter III.$\Gamma$ of \cite{BH}.)

\begin{cor}\label{BHcor}
Assume that a group $T$ acts elliptically on a $\delta$-hyperbolic space $S$. Then there is $x\in S$ such that the diameter of the orbit $Tx$ is at most $4\delta +1$.
\end{cor}

\begin{proof}
Let $Y$ be some orbit of $T$. Since the action of $T$ is elliptic, $Y$ is bounded. Let $\e=1/2$ and let $C_\e$ be as in the lemma above. Note that the action of $T$ preserves $C_\e$ setwise. Hence for every $x\in C_\e$, we have $Tx\subseteq C_\e$ and thus the diameter of the orbit $Tx$ is at most $4\delta +1$ by the lemma.
\end{proof}

Recall that every loxodromic element $g$ of a group $G$ acting acylindrically on a hyperbolic space is contained in a unique maximal virtually cyclic subgroup $E(g)$ of $G$ \cite[Lemma 6.5]{DGO}.

\begin{lem}\label{egind}
Suppose that a group $G$ acts acylindrically on a hyperbolic space $S$. Then there exists $N\in \mathbb N$ such that for every loxodromic element $g\in G$, $E(g)$ contains a cyclic subgroup of index at most $N$.
\end{lem}

\begin{proof}
By \cite[Theorem 6.8]{DGO}, we have $E(g)\h G$. It is well-known (see, for example, \cite[Lemma 2.5]{FJ}) that every virtually cyclic group has a finite-by-cyclic subgroup of index at most $2$. Thus there exists a subgroup $E_0\le E(g)$ of index at most $2$ and a finite normal subgroup $T\le E_0$ such that $E_0/T$ is cyclic. Let $a\in E_0$ be an element whose image generates $E_0/T$. It is clear that $$|E(g):\langle a\rangle |\le 2|E_0:\langle a\rangle |= 2|T|.$$ Thus it suffices to bound $|T|$ from above.

Since $T$ is a finite normal subgroup of $E_0$, there is a positive integer $n$ such that $g^n$ commutes with $T$.
Let $\delta$ be the hyperbolicity constant of $S$. By Corollary \ref{BHcor}, there exists $x\in S$ such that the diameter of $Tx$ is at most $4\delta +1$. Let $R, N$ be the constants from the definition of acylindricity of the action corresponding to $\e =4\delta +1$. Since $g$ is a loxodromic element, there is $m\in \mathbb N$ such that the point $y=g^{mn}x$ satisfies $\d (x,y)\ge R$. Note that for every $t\in T$, we have $$\d (y, ty)=\d (g^{mn}x, tg^{mn}x)=\d (g^{mn}x, g^{mn}tx)=\d (x, tx)\le \e.$$ Hence $|T|\le N$.
\end{proof}

\begin{cor}\label{ci-lem}
Suppose that a group $G$ acts acylindrically on a hyperbolic space $S$. Then there exists $N\in \mathbb N$ such that for every loxodromic element $g\in G$, the centralizer $C_G(g)$ contains a cyclic subgroup of index at most $N$.
\end{cor}

\begin{proof}
For every $c\in C_G(g)$, the intersection $c^{-1}E(g)c \cap E(g)$ contains $\langle g\rangle $ and hence is infinite. By Lemma \ref{maln} we obtain $c\in E(g)$. Thus $C_G(g)\le E(g)$ and the claim follows from Lemma \ref{egind}
\end{proof}

\begin{ex}
Let $E_n=\langle x_n\rangle \times \mathbb Z/n\mathbb Z$ and let $G=E_1\ast E_2\ast \cdots $ be the free product of $E_n$ for all $n\in \mathbb N$. Then $G$ does not admit any universal action by either Lemma \ref{egind} or Corollary \ref{ci-lem}.
\end{ex}

The group $G$ in the above example is infinitely generated, which motivates the following.

\begin{prob}
Does every finitely generated (or finitely presented) group admit a universal acylindrical action on a hyperbolic space?
\end{prob}

It is possible that Dunwoody's example of an inaccessible group \cite{Dun}  provides the negative solution in the finitely generated case. For a discussion of Dunwoody's example in a similar context we refer to \cite{BDM}.

Even the following is unknown. If true, it would allow one to simplify many technical arguments from \cite{DGO} and some other papers.

\begin{prob}
Let $h_1, h_2$ be two generalized loxodromic elements of a group $G$. Does there always exist an acylindrical action of $G$ on a hyperbolic space such that both $h_1, h_2$ act loxodromically?
\end{prob}

%%%%%%%%%%%%%%%%%%%%%%%%%%%%%%%%%%%%%%%%%%%%%%%%%%%%%%%%%%%%%%%%%%%%%

\section{Some applications}

%%%%%%%%%%%%%%%%%%%%%%%%%%%%%%%%%%%%%%%%%%%%%%%%%%%%%%%%%%%%%%%%%%%%%

In this section we discuss some applications of Theorems \ref{class} -- \ref{lox}. In particular, we will prove Corollaries \ref{s-norm}--\ref{bgen}.

Corollary \ref{s-norm} is an immediate consequence of the following.

\begin{lem}\label{s-n}
Let $G$ be a group acting acylindrically and non-elementarily on a hyperbolic space $S$. Then every $s$-normal subgroup of $G$ acts non-elementarily.
\end{lem}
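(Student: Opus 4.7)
The plan is to apply Theorem \ref{class} to the restriction of the acylindrical $G$-action to $H$, and then to rule out the two elementary possibilities using $s$-normality together with the loxodromic element of $G$ that Theorem \ref{class} produces from the non-elementarity hypothesis. The restriction of an acylindrical action is obviously acylindrical, so the $H$-action on $S$ falls into exactly one of the classes (a), (b), (c) of Theorem \ref{class}; I want to show it must be (c).

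First I would eliminate case (a). Pick $x\in S$ such that $D:=\diam(Hx)<\infty$, and fix a loxodromic $g\in G$ (existing because the $G$-action is in case (c) of Theorem \ref{class}). For every $n$, any element $h\in H\cap H^{g^n}$ can be written $h=g^nh'g^{-n}$ with $h'\in H$, so $\d(x,hx)\le D$ and $\d(g^nx,hg^nx)=\d(x,h'x)\le D$. Acylindricity of the $G$-action applied with $\varepsilon=D$ gives constants $R,N$ bounding the number of such $h$ by $N$ whenever $\d(x,g^nx)\ge R$; since $g$ is loxodromic this happens for all sufficiently large $n$, contradicting $|H\cap H^{g^n}|=\infty$ which is guaranteed by $s$-normality.

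Next I would eliminate case (b). If $H$ is virtually cyclic and contains a loxodromic element $h$, then (up to replacing $h$ by a power) $\langle h\rangle$ has finite index in $H$, and the limit set of $H$ on $\partial S$ is the pair $\{h^{+\infty},h^{-\infty}\}$; in particular every element of $H$ preserves this pair as a set. For any $g\in G$, the conjugate $H^g$ has limit set $\{gh^{+\infty},gh^{-\infty}\}$. Since $\langle h\rangle$ has finite index in $H$, any infinite subgroup of $H$ intersects $\langle h\rangle$ in an infinite subgroup; applied to the infinite group $H\cap H^g$ this yields a loxodromic power $h^m\in H^g$ whose fixed-point pair on $\partial S$ must coincide with the limit set of $H^g$. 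Hence $\{h^{\pm\infty}\}=\{gh^{\pm\infty}\}$ for every $g\in G$, exhibiting a $G$-invariant two-point subset of $\partial S$, which contradicts non-elementarity of the $G$-action.

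With (a) and (b) ruled out, Theorem \ref{class} forces the $H$-action into case (c), i.e., $H$ is non-elementary, as desired. I expect the main technical obstacle to be the quantitative bookkeeping in case (a): one must set up $\varepsilon:=D$, extract the associated $R,N$ from acylindricity of the ambient $G$-action, and then invoke the loxodromic $g$ exactly once to make $\d(x,g^nx)$ large, rather than trying to re-run acylindricity along the conjugate sequence $H^{g^n}$. The argument in case (b) is essentially a clean dynamics-on-the-boundary computation once one records that virtually cyclic groups with a loxodromic element have a canonical two-point limit set.
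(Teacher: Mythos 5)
Your proof is correct. The elimination of the elliptic case is essentially the paper's own argument: both proofs pick a basepoint $x$, set $\varepsilon$ equal to the orbit diameter $\sup_{h\in H}\d(x,hx)$, find $g\in G$ moving $x$ at least $R(\varepsilon)$ (you take a high power of a loxodromic element, the paper just uses unboundedness of orbits --- immaterial), and then observe that the infinitely many elements of $H\cap H^{g^{\pm 1}}$ all $\varepsilon$-move both $x$ and $gx$, violating acylindricity.

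Where you genuinely diverge is the virtually cyclic case. The paper passes from $H$ to the $s$-normal cyclic subgroup $\langle h\rangle$, invokes condition (L$_4$) of Theorem \ref{lox} to place $h$ inside a virtually cyclic subgroup $E\h G$, and then uses almost malnormality of hyperbolically embedded subgroups (Lemma \ref{maln}) to force $G=E$, contradicting non-elementarity. You instead argue purely on the boundary: $s$-normality plus $[H:\langle h\rangle]<\infty$ puts a nonzero power $h^m$ inside every conjugate $H^g$, so the two-point limit set $\{h^{\pm\infty}\}$ is $G$-invariant, which is incompatible with $G$ having two independent loxodromic elements. Your route is more elementary and self-contained --- it avoids the machinery of hyperbolically embedded subgroups entirely and would work for any non-elementary action with the Theorem \ref{class} trichotomy --- at the cost of quietly invoking north--south dynamics (a loxodromic isometry of a possibly non-proper hyperbolic space fixes exactly its two limit points on $\partial S$) in the final step ``a $G$-invariant two-point set contradicts non-elementarity''; that fact is standard but deserves a sentence, since it is what converts the invariant pair into the statement that all loxodromics of $G$ share the same endpoints. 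The paper's route, by contrast, leans on Theorem \ref{lox} and Lemma \ref{maln}, which are already established in the text, so it is shorter to write down given the surrounding infrastructure.
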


\begin{proof}
Arguing by contradiction, suppose that $H\le G$ is $s$-normal and elementary. Then by Theorem \ref{class} either $H$ is elliptic or $H$ is virtually cyclic and contains a generalized loxodromic element.

First assume that $H$ is elliptic, i.e., every orbit of $H$ is bounded. Fix any $x\in S$ and let $\e =\sup_{h\in H} \d(x, hx)$. Let $R=R(\e)$ be the constant from the acylindricity condition and $g\in G$ be any element such that $\d (x, gx) \ge R$ (such $g$ always exists as the action of $G$ is non-elementary). Then for every $h\in H\cap H^{g^{-1}}$, we have $\d (x, hx)\le \e $ and $$\d (gx, hgx) = \d (gx , gh^\prime x) = \d (x, h^\prime x) \le \e,$$ where $h^\prime =g^{-1}hg\in H$. Since $|H\cap H^{g^{-1}}|=|H\cap H^g|=\infty$, this contradicts acylindricity of the action.

Now assume that $H$ is virtually cyclic and contains a loxodromic element $h$. In particular, we have $|H:\langle h\rangle|<\infty$ and hence $\langle h\rangle$ is also $s$-normal in $G$. By condition (L$_4$) from Theorem \ref{lox} $h$ is contained in a virtually cyclic subgroup $E\h G$, which is almost malnormal in $G$ by Lemma \ref{maln}. Since $|E\cap E^g|\ge |\langle h\rangle \cap \langle h\rangle ^g|=\infty $ for every $g\in G$, we have $G=E$. This contradicts the assumption that $G$ acts non-elementarily.
\end{proof}

We mention one result of more algebraic flavor, which will be used below. It can also be derived from \cite{DGO}.

\begin{cor}\label{ar}
For every acylindrically hyperbolic group $G$, the following hold.
\begin{enumerate}
\item[(a)] $G$ has finite  amenable radical. In particular, the center of $G$ is finite.
\item[(b)] If $G$ decomposes as $G=G_1\times G_2$, then $|G_i|<\infty $ for $i=1$ or $i=2$.
\end{enumerate}
\end{cor}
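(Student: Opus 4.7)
The plan is to prove (a) directly via Corollary \ref{s-norm}, and then deduce (b) from (a) together with the fact that centralizers of generalized loxodromic elements are virtually cyclic. For (a), let $R$ denote the amenable radical of $G$ and suppose toward contradiction that $|R|=\infty$. Since $R$ is normal, $R\cap R^g=R$ for every $g\in G$, so $R$ is an infinite $s$-normal subgroup; by Corollary \ref{s-norm}, $R$ is itself acylindrically hyperbolic. But, as noted in the paragraph preceding Proposition \ref{bgen}, every acylindrically hyperbolic group contains a non-abelian free subgroup by the standard ping-pong argument, contradicting amenability of $R$. The center $Z(G)$ is abelian and normal, hence contained in $R$, and therefore finite.

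For (b), I first establish the auxiliary claim that if $h\in G$ is generalized loxodromic, then $C_G(h)$ is virtually cyclic. By Theorem \ref{lox}(L$_4$), there is a virtually cyclic subgroup $E\h G$ containing $h$, and $h$ has infinite order. For every $g\in C_G(h)$ we have $h=ghg^{-1}\in E^g$, so $h\in E\cap E^g$; since $|h|=\infty$ and $E$ is almost malnormal by Lemma \ref{maln}, this forces $g\in E$. Hence $C_G(h)\le E$ is virtually cyclic.

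Now assume $G=G_1\times G_2$ is acylindrically hyperbolic and pick a generalized loxodromic element $h=(h_1,h_2)\in G$, which exists by acylindrical hyperbolicity. Both $\langle h_1\rangle\times\{1\}$ and $\{1\}\times\langle h_2\rangle$ commute with $h$, so the direct product $\langle h_1\rangle\times\langle h_2\rangle$ embeds into the virtually cyclic group $C_G(h)$. Since $\mathbb{Z}^2$ does not embed into any virtually cyclic group, one of $h_1,h_2$ must have finite order; say $h_2^n=1$ for some $n\ge 1$. Then $h^n=(h_1^n,1)$ lies in $G_1\times\{1\}$ and remains generalized loxodromic in $G$ (in the acylindrical action supplied by (L$_2$) for $h$, the power $h^n$ is still loxodromic, and $h^n\ne 1$ since $|h|=\infty$). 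Because $G_2$ centralizes $G_1$, we have $G_2\le C_G(h^n)$, and the latter is virtually cyclic. Thus $G_2$ is virtually cyclic, hence amenable, and it is normal in $G$; part (a) therefore forces $|G_2|<\infty$.

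The only non-bookkeeping step is the virtual cyclicity of $C_G(h)$ for generalized loxodromic $h$, but this drops out cleanly from (L$_4$) combined with almost malnormality of hyperbolically embedded subgroups; once it is in place, both (a) and (b) follow directly.
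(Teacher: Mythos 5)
Your proof is correct. Part (a) is identical to the paper's argument: the amenable radical, being normal, is $s$-normal, so if it were infinite it would be acylindrically hyperbolic by Corollary \ref{s-norm}, contradicting the existence of non-abelian free subgroups. For part (b) the paper takes a slightly different route: assuming $G_1$ is infinite, it applies Corollary \ref{s-norm} directly to the normal subgroup $G_1$ to conclude that $G_1$ is itself acylindrically hyperbolic and hence contains a generalized loxodromic element $g$; Theorem \ref{lox} then places $g$ in a virtually cyclic $E\h G$, Lemma \ref{maln} forces $G_2\le E$, and $|G_2|\le [E:\langle g\rangle]<\infty$ since $G_2\cap\langle g\rangle=\{1\}$. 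You instead start from an arbitrary generalized loxodromic element of $G$, prove the correct and genuinely useful auxiliary fact that its centralizer is virtually cyclic, use the $\mathbb{Z}^2$ obstruction to force one coordinate to be torsion, and pass to a power lying in one factor. Both routes rest on the same two ingredients --- condition (L$_4$) of Theorem \ref{lox} and almost malnormality (Lemma \ref{maln}) --- so the real difference is only in how a generalized loxodromic element is located inside a factor: the paper gets one from $s$-normality of the infinite factor, you manufacture one by taking powers. Your version isolates the reusable statement that centralizers of generalized loxodromic elements are virtually cyclic and closes the argument by feeding the virtually cyclic normal subgroup back into part (a), whereas the paper finishes with a direct index count; the paper's version is a bit shorter, yours is a bit more modular, and both are sound.
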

\begin{proof}
Every acylindrically hyperbolic group contains non-abelian free subgroups by the standard ping-pong argument. In particular, acylindrically hyperbolic groups are non-amenable. Together with Corollary \ref{s-norm} this proves (a).

To prove (b) we note that if, say, $G_1$ is infinite, then it is acylindrically hyperbolic by Corollary \ref{s-norm}. In particular, it contains a generalized loxodromic element $g$. By Theorem \ref{lox}, $g$ is contained in a virtually cyclic hyperbolically embedded subgroup $E\h G$. By Lemma \ref{maln}, we have $G_2\le E$. Since $G/G_1\cong G_2$, and $[E:\langle g\rangle ]<\infty$, we obtain $|G_2|<\infty $.
\end{proof}

Corollary \ref{s-norm} is also useful in showing that certain groups are not acylindrically hyperbolic.

\begin{ex}
The Baumslag-Solitar groups $$BS(m,n)=\langle a,t\mid t^{-1}a^mt=a^n\rangle $$ are not acylindrically hyperbolic unless $m=n=0$ since $\langle a\rangle$ is $s$-normal in $BS(m,n)$.
\end{ex}

\begin{ex}
Let $R$ be an infinite integral domain, $K$ its field of fractions, $G$ a countable subgroup of $GL_n(R)$ containing a finite index subgroup of $EL_n(R)$, $n\ge 3$. Then $G$ contains a subgroup $H$ such that for every $g\in G$, the intersection $H^g\cap H$ contains an infinite amenable normal subgroup \cite{BFS}. In particular,
$H$ is $s$-normal in $G$ and is not acylindrically hyperbolic by Corollary \ref{ar} (a). Hence $G$ is not acylindrically hyperbolic by Corollary \ref{s-norm}. In particular, $SL_n(R)$ is not acylindrically hyperbolic for $n\ge 3$.
\end{ex}

\begin{proof}[Proof of Corollary \ref{ell}]
Let $\{ E_i\}_{i\in I}$ be a chain of elliptic subgroups of $G$ and let $E$ denote the union of the chain. Then $E$ is elliptic. Indeed otherwise it contains a loxodromic element $g$ by Theorem \ref{class}. Then $g\in E_i$ for some $i\in I$, which contradicts our assumption that $E_i$ is elliptic. Now the standard application of the Zorn lemma shows that every elliptic subgroup of $G$ is contained in a maximal elliptic subgroup.

Let $E$ denote an infinite maximal elliptic subgroup of $G$. Obviously $E$ is $s$-normal in its commensurator. Hence by Lemma \ref{s-n} $Comm_G(E)$ is elementary. Since $E$ is maximal, we obtain $E=Comm_G(E)$.
\end{proof}

In the proof of the next result we will use some notions from Section 4.

\begin{proof}[Proof of Proposition \ref{bgen}]
Let $G$ be an acylindrically hyperbolic group. By Theorem \ref{main}, there is a non-degenerate hyperbolically embedded subgroup $H\h G$. Further by Theorem \ref{XtoY}, there exists a subset $X\subseteq G$ such that $H\h (G,X)$ and the corresponding Cayley graph $\Gamma(G, X\sqcup H)$ is acylindrical. Let $G=G_1\ldots G_n$. Suppose that none of $G_1, \ldots , G_n$ is acylindrically hyperbolic. Then by Theorem \ref{class}, every $G_i$ is either elliptic with respect to the action of $G$ on the Cayley graph $\Gamma(G, X\sqcup H)$ or contains a cyclic subgroup of finite index generated by a loxodromic element.

Suppose that $G_i$ is of the latter type for some $i\in \{ 1, \ldots, n\}$. That is, there exists a loxodromic (with respect to the action on $\Gamma(G, X\sqcup H) $) element $x\in G_i$ such that
\begin{equation}\label{finind}
|G_i:\langle x\rangle|<\infty .
\end{equation}

By Lemmas \ref{acyl} and \ref{X1X2}, we can assume that $x\in X$. Since the action of $x$ on $\Gamma(G, X\sqcup H)$ is loxodromic, there exist $\mu\ge 1$ and $b\ge 0$ such that for every $n\in \mathbb N$, every path in $\Gamma(G, X\sqcup H)$ labelled by $x^n$ is $(\mu, b)$-quasi-geodesic. Let $D\ge 3C(\mu,b)$, where $C(\mu,b)$ is the constant from Lemma \ref{C}. Then for
every $n\in \mathbb N$, we have
\begin{equation}\label{S1x}
S(1,x^n;D)=\emptyset
\end{equation}
(see Section 4 for definitions). Indeed if $C\in  S(1,x^n;D)$, then the path $p$ in $\Gamma(G, X\sqcup H)$ starting at $1$ and labelled by the word $x^n$ in the alphabet $X$ should penetrate $C$ by Lemma \ref{sep}. However this is impossible since $p$ does not have any edges labelled by elements of $\mathcal H$. Thus (\ref{S1x}) holds and therefore we have $x^n\in Y$ for all $n\in \mathbb N$, where $Y$ is defined by (\ref{Y}).

We now pass to $\Gamma(G, Y\sqcup H)$. By Lemmas \ref{he} and \ref{acylGy}, $H\h (G,Y)$ and $\Gamma(G, Y\sqcup H)$ is acylindrical. Since $x^n\in Y$ for every $n\in \mathbb N$, $\langle x\rangle$ is elliptic with respect to the action on $\Gamma(G, Y\sqcup H)$. From this and (\ref{finind}) one can easily derive that $G_i$ acts elliptically on $\Gamma(G, Y\sqcup H)$. Note also that since $X\subseteq Y$ (see Lemma \ref{hypGy}), all subgroups that act elliptically on $\Gamma(G, X\sqcup H)$ also act elliptically on $\Gamma(G, Y\sqcup H)$. Iterating this process we can find a subset $Z\subseteq G$ such that $H\h (G,Z)$ and all subgroups $G_1, \ldots , G_n$ have bounded orbits in $\Gamma(G, Z\sqcup H)$. Hence so does $G=G_1\ldots G_n$. However this contradicts Lemma \ref{ah}.
\end{proof}

%%%%%%%%%%%%%%%%%%%%%%%%%%%%%%%%%%%%%%%%%%%%%%%%%%%%%%%%%%%%%%%%%%%%%

\section{Appendix: A brief survey}

%%%%%%%%%%%%%%%%%%%%%%%%%%%%%%%%%%%%%%%%%%%%%%%%%%%%%%%%%%%%%%%%%%%%%

Many general results about acylindrically hyperbolic groups were proved in \cite{BBF,BF,DGO,Ham,Hull,HO,Sis} and other papers under different assumptions, which are now known to be equivalent to acylindrical hyperbolicity. The purpose of this section is to bring these results together; if necessary, we reformulate them using the language of our paper.

We begin with examples of acylindrically hyperbolic groups. Obviously every proper and cobounded action is acylindrical. In particular, this applies to the action of any finitely generated group on its Cayley graph with respect to a finite generating set. Thus every non-elementary hyperbolic group is acylindrically hyperbolic. More generally, non-virtually-cyclic relatively hyperbolic groups with proper peripheral subgroups are acylindrically hyperbolic. In the latter case the action on the relative Cayley graph is acylindrical, see Proposition \ref{rhg}. Below we discuss some less obvious examples.

\begin{enumerate}
\item[(a)] The mapping class group $MCG(\Sigma_{g,p})$ of a closed surface of genus $g$ with $p$ punctures is acylindrically hyperbolic unless $g=0$ and $p\le 3$ (in these exceptional cases, $MCG(\Sigma_{g,p})$ is finite). For $(g,p)\in \{ (0,4), (1,0), (1,1)\} $ this follows from the fact that $MCG(\Sigma_{g,p})$ is non-elementary hyperbolic in these cases. For all other values of $(g,p)$ this follows from hyperbolicity of the curve complex $\mathcal C(\Sigma_{g,p})$ of $\Sigma_{g,p}$ first proved by Mazur and Minsky \cite{MM} and acylindricity of the action of $MCG(\Sigma_{g,p})$ on $\mathcal C(\Sigma_{g,p})$, which is due to Bowditch \cite{Bow}.
\item[(b)] Let $n\ge 2$ and let $F_n$ be the free group of rank $n$. Bestvina and Feighn \cite{BFe} proved that for every fully irreducible automorphism $f\in Out(F_n)$ there exists a hyperbolic graph such that $Out(F_n)$ acts on it and the action of $f$ satisfies the weak proper discontinuity condition. Thus $Out(F_n)$ satisfies condition (AH$_3$) and hence is acylindrically hyperbolic.
\item[(c)] Hamenst\"adt proved that every group acting elementary and properly on a proper hyperbolic space of bounded growth is virtually nilpotent (see the proof of Proposition 7.1 in \cite{Ham}). Note that, in general, proper actions may not be acylindrical (see Example \ref{ex1}), but they do satisfy the weak proper discontinuity condition for every loxodromic element. It follows that every group acting properly on a proper hyperbolic space of bounded growth is either virtually nilpotent or acylindrically hyperbolic.
\item[(d)] Sisto \cite{Sis} showed that if a group $G$ acts properly on a proper $CAT(0)$ space, then every rank $1$ element of $G$ is contained in a hyperbolically embedded virtually cyclic subgroup. In particular, such a group $G$ is either virtually cyclic or acylindrically hyperbolic. Together with the work of Caprace--Sageev \cite{CS} and Corollary \ref{ar}, this implies the following alternative for right angled Artin groups: every RAAG is either cyclic, or directly decomposable, or acylindrically hyperbolic. A similar result holds for graph products of groups (or, even more generally, subgroups of graph products) \cite{MO}.
\item[(e)] In \cite{MO}, Minasyan and the author show that many fundamental groups of graphs of groups satisfy (AH$_3$) and hence are acylindrically hyperbolic. As an application, it is shown that every $1$-relator group with at least $3$ generators is acylindrically hyperbolic. Yet another corollary is that for every field $k$, the automorphism group $Aut\, k[x,y]$ of the polynomial algebra $k[x,y]$ is acylindrically hyperbolic.
\item[(f)] For every compact $3$-manifold $M$, the fundamental group $\pi_1(M)$ is either virtually polycyclic, or acylindrically hyperbolic, or contains a normal subgroup $N\cong \mathbb Z$ such that $\pi_1(M)/N$ is acylindrically hyperbolic \cite{MO}.
\end{enumerate}

Now we turn to a brief discussion of some known properties of acylindrically hyperbolic groups. Many of them were first established for particular subclasses of acylindrically hyperbolic groups such as hyperbolic groups, non-trivial free products, relatively hyperbolic groups, mapping class groups, $Out(F_n)$, etc. For a survey of these (quite numerous) particular results, further details, and motivation we refer to \cite{DGO} and other papers cited in this section.

We begin with a general comment. According to Theorem \ref{main}, every acylindrically hyperbolic group contains non-degenerate hyperbolically embedded subgroups. Furthermore, by \cite[Theorem 2.23]{DGO} these subgroups can be chosen virtually free. This allows one to extend many results about relatively hyperbolic groups to the case of acylindrically hyperbolic ones. The paper \cite{DGO} by Dahmani, Guirardel, and the author provide convenient tools for such an extension. One example is the group theoretic Dehn filling theorem, which was originally  proved in \cite{Osi07} in the context of relatively hyperbolic groups (and independently in \cite{GM} in the torsion free case), and then generalized to groups with hyperbolically embedded subgroups in \cite{DGO}. For details and relation to $3$-dimensional topology and Thurston's work we refer to \cite{Osi07}.

One application of Dehn filling in hyperbolically embedded subgroups is the proof of following theorem, which can be thought of as an indication of algebraic and model theoretic ``largeness" of acylindrically hyperbolic groups. Recall that a group $G$ is \emph{$SQ$-universal} if every countable group embeds in a quotient of $G$. For a discussion of stability and superstability of elementary theories we refer to \cite{Shel} and the survey \cite{W}.

\begin{thm}[Dahmani--Guirardel--Osin {\cite{DGO}}]\label{T1}
Suppose that a group $G$ is acylindrically hyperbolic. Then the following hold.
\begin{enumerate}
\item[(a)] $G$ is $SQ$-universal, i.e., every countable group embeds in a quotient of $G$.
\item[(b)] The elementary theory of $G$ is not superstable.
\end{enumerate}
\end{thm}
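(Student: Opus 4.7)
The plan is to derive both parts from the group-theoretic Dehn filling theorem for hyperbolically embedded subgroups, which is the main technical engine developed in \cite{DGO}. By Theorem \ref{main}, $G$ contains a proper infinite hyperbolically embedded subgroup; a standard ping-pong argument inside this subgroup (using the fact that it contains independent loxodromic elements acting on the Cayley graph of condition (AH$_1$)) further yields a hyperbolically embedded non-abelian free subgroup $F = F(a,b)$ in $G$. Dehn filling then supplies, for every finite subset $\mathcal{F} \subset F \setminus \{1\}$, a large class of normal subgroups $N \trianglelefteq F$, namely those with $N \cap \mathcal{F} = \emptyset$, for which the natural map $F/N \to G / \langle\langle N \rangle\rangle_G$ is injective. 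This is the common setup for both parts.

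For (a), fix an arbitrary countable group $K$. The free group $F$ of rank two is itself $SQ$-universal, and its classical small-cancellation proof of $SQ$-universality tolerates the additional constraint of avoiding any prescribed finite subset of $F \setminus \{1\}$ (one simply imposes each defining relator to be longer than $\max_{w \in \mathcal{F}} |w|$ and satisfies small cancellation). Thus one obtains $N \trianglelefteq F$ with $K \hookrightarrow F/N$ and $N \cap \mathcal{F} = \emptyset$. Applying Dehn filling then produces a chain of embeddings $K \hookrightarrow F/N \hookrightarrow G/\langle\langle N \rangle\rangle_G$, proving $SQ$-universality of $G$. The only nontrivial point is the simultaneous compatibility of the small-cancellation avoidance condition with the Dehn-filling exceptional set, which is immediate since both are governed by a single ``avoid a finite set'' condition.

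For (b), I would invoke a Shelah-type criterion for non-superstability and translate the abundance of generalized loxodromic elements in $G$ into a first-order witness. By Theorem \ref{lox}, every generalized loxodromic element lies in a virtually cyclic hyperbolically embedded subgroup, and by Lemma \ref{maln} these subgroups are almost malnormal, so their centralizers are ``small'' in a uniformly first-order controlled way. Using the formula $\phi(x, y): xyx^{-1} = y$, together with a carefully chosen infinite family of pairwise independent loxodromic elements and suitable products of their powers, one constructs a sequence of parameters over which $\phi$ defines a family of subgroups exhibiting either the tree property of the second kind or an infinite branching of incomparable centralizers. Either configuration contradicts superstability. An alternative route, also in the spirit of \cite{DGO}, is to use part (a) together with a diagonalization: since every countable group embeds in a quotient of $G$, the elementary theory of $G$ ``sees'' the model-theoretic complexity of arbitrarily wild countable groups, which is incompatible with being superstable.

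The principal obstacle is the model-theoretic step in (b): turning the algebraic richness produced by hyperbolically embedded virtually cyclic subgroups into an explicit first-order configuration that provably violates superstability, as opposed to merely stability. Part (a) is comparatively routine once a hyperbolically embedded free subgroup has been extracted from Theorem \ref{main} and Dehn filling has been invoked.
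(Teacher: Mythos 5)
The paper you are working from does not actually prove this theorem: it appears in the survey section, is attributed to \cite{DGO}, and the only indication given of its proof is that it is ``an application of Dehn filling in hyperbolically embedded subgroups.'' So the comparison can only be with the argument of \cite{DGO}, and against that your part (a) is essentially the right outline: pass to a non-degenerate hyperbolically embedded subgroup via Theorem \ref{main}, upgrade it to a (virtually) free one, and combine the Dehn filling theorem with the classical small cancellation proof of $SQ$-universality of $F_2$, which does tolerate the requirement that the kernel $N$ avoid a prescribed finite set (by Greendlinger's lemma every nontrivial element of $N$ contains a large piece of a long relator). Two caveats. First, ``a standard ping-pong argument yields a hyperbolically embedded non-abelian free subgroup'' badly understates the work involved: ping-pong produces a free subgroup, but proving that a suitable free subgroup is \emph{hyperbolically embedded} is \cite[Theorem 6.14]{DGO} (cf.\ the reference to \cite[Theorem 2.23]{DGO} in Section 8) and is the hard input here. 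Second, because of the maximal finite normal subgroup $K(G)$ the subgroup one actually obtains is of the form $F_n\times K(G)$ rather than $F_n$; this is harmless but must be accounted for when choosing $N$ normal in the hyperbolically embedded subgroup.

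The genuine gap is in part (b). Your ``alternative route'' is invalid: $SQ$-universality concerns quotients of $G$, and quotients are neither models of $\mathrm{Th}(G)$ nor elementarily equivalent to $G$, so no model-theoretic complexity whatsoever is transferred from them to the theory of $G$. Your first route is a gesture at machinery rather than an argument: no formula, parameters, or configuration witnessing the failure of superstability is actually produced. Worse, the specific configuration you hint at cannot work as stated: by Theorem \ref{lox} and Lemma \ref{maln}, centralizers of generalized loxodromic elements are virtually cyclic, so any descending chain of such centralizers with infinite-index steps terminates after two steps; and chain conditions on centralizers hold in \emph{every} stable group (Baldwin--Saxl), so centralizer chains cannot separate superstability from stability --- exactly the distinction you identify as the crux. (Non-abelian free groups, which are acylindrically hyperbolic, are stable yet not superstable, so any correct proof must exploit something strictly finer than the usual order/independence-property witnesses.) What is needed is an explicit first-order obstruction --- for instance an infinite strictly descending chain of definable subgroups, each of infinite index in its predecessor, which is incompatible with an ordinal-valued $U$-rank --- and supplying such a witness from the Dehn filling construction is precisely the step that \cite{DGO} carries out and that your proposal leaves open.
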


If elliptic subgroups of $G$ are reducible in a certain sense, Theorem \ref{T1} often yields a classification of ``small" subgroups of $G$. For instance, one can show
that every subgroup of a mapping class group of a punctured closed surface is either virtually abelian or $SQ$-universal
\cite{DGO}. This generalizes the Tits alternative for subgroups of mapping class groups, as well as various results about non-embedability of higher rank lattices. For the latter application, one need to combine Theorem \ref{T1} with two facts: the Margulis Theorem about normal subgroups of higher rank lattices in semi-simple Lie groups and the easy observation that every countable $SQ$-universal group has uncountably many normal subgroups. For more details we refer to the discussion following Corollary 2.31 in \cite{DGO}.

The next result is of more analytic flavor. For a group $G$, by a \emph{normed $G$-module} we mean a normed vector space $V$ endowed with a (left) action of the group $G$ by isometries. Recall that a map $q\colon G\to V$ is called a \emph{1-quasi-cocycle} if there exists a constant $\e>0$ such that for every $f,g\in G$ we have $$\| q(fg)-q(f)-fq(g)\|\le \e .$$   The vector space of all $1$-quasi-cocycles on $G$ with values in $V$ is denoted by $\QZ (G,V)$. The study of $1$-quasi-cocycles is partially motivated by applications to bounded cohomology, of group von Neumann algebras, measure equivalence and orbit equivalence of groups, and low dimensional topology (see \cite{Cal,ChSi,Ham,Mon,Pop} and references therein).

Given a subgroup $H\le G$, by an $H$-submodule of a $G$-module $V$ we mean any $H$-invariant subspace of $V$ with the induced action of $H$. If $H$ is a subgroup of $G$, the restriction functor ${\rm res}_H$ obviously maps $\QZ(G,V)$ to $\QZ(H,V)$. In general, it is not invertible. However, the following result proved in \cite{HO} shows that one can extend quasi-cocycles from a hyperbolically embedded subgroup to the whole group, possibly after a bounded perturbation. By $\| \cdot \|_\infty$ we denote the $sup$-norm on $\QZ(G,V)$; that is, $\| q\|_{\infty}=\sup_{g\in G} \| q(g)\| $ for $q\in \QZ(G,V)$.

\begin{thm}\label{ind1}
Let $V$ be a $G$ module, $U$ an $H$-submodule of $V$. There exists a linear map $$\kappa\colon \QZ (H, U)\to \QZ (G,V)$$ such that for any $q\in \QZ(H,U)$, we have $\| {\rm res}_H(\kappa (q)) -q\|_\infty <\infty .$
\end{thm}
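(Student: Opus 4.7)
The plan is to build $\kappa(q)(g)$ as a sum of local contributions indexed by the separating cosets of Section~4. By Theorem~\ref{XtoY} fix $X\subseteq G$ such that $H\h (G,X)$ and $\Gamma(G,X\sqcup H)$ is acylindrical, and fix $D\ge 3C(1,0)$ with $C$ as in Lemma~\ref{C}. For each $g\in G$ fix, once and for all, a geodesic $p_g$ from $1$ to $g$ in $\Gamma(G,X\sqcup H)$; by Lemma~\ref{sep-ref} it decomposes as
$$
p_g \;=\; p_0\,a_1\,p_1\,\cdots\,a_n\,p_n,
$$
where $a_i$ is the $H$-component corresponding to the $i$-th element $C_i$ of $S(1,g;D)=\{C_1\preceq\cdots\preceq C_n\}$. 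Setting $v_i:=(a_i)_-\in C_i$ and $h_i:=\Lab(a_i)\in H$, define
$$
\kappa(q)(g)\;:=\;\sum_{i=1}^{n}v_i\cdot q(h_i)\;\in\;V.
$$
The sum is finite by Lemma~\ref{dS}, and linearity of $\kappa$ in $q$ is immediate.

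For the quasi-cocycle bound, fix $f,g\in G$. $G$-equivariance gives that $f\cdot\kappa(q)(g)$ equals the analogous sum built from the translated geodesic $f\cdot p_g$ from $f$ to $fg$, indexed by cosets in $S(f,fg;D)=f\cdot S(1,g;D)$. Applying Lemma~\ref{HO} three times (to the triples $(1,fg,f)$, $(1,f,fg)$, $(f,fg,1)$) shows that the three index sets $S(1,fg;D)$, $S(1,f;D)$, $S(f,fg;D)$ pairwise coincide up to subsets of cardinality at most $2$. Partition the defect $\Delta:=\kappa(q)(fg)-\kappa(q)(f)-f\cdot\kappa(q)(g)$ according to which of the three index sets each contributing coset belongs to: the $O(1)$ ``boundary'' cosets contribute $O(1)$ individually bounded terms, while each coset $C$ common to all three sets contributes a triple $v_C\, q(h_C)-v_C'\, q(h_C')-v_C''\, q(h_C'')$. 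The three entry points $v_C,v_C',v_C''$ all lie in $C$, and applying Lemma~\ref{C} to triangles formed by pairs of initial sub-geodesics of $p_{fg}$, $p_f$, $f\cdot p_g$ (truncated just before $C$) joined by bridging $H$-edges — which are isolated $H$-components of those triangles, since the truncated geodesics have no $H$-component in $C$ — gives pairwise $\widehat d$-distances bounded by a universal constant $K$; similarly for the exit points.

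By definition of $\h$, the $\widehat d$-ball of radius $K$ in $H$ is finite, so $\|q\|$ is bounded on it. Applying the $1$-quasi-cocycle identity for $q$ a bounded number of times allows one to rewrite $q(h_C),q(h_C'),q(h_C'')$ in terms of any one of them up to multiplication by $H$-elements from this finite ball; using that $G$ acts on $V$ by isometries to transfer norms across the $v$-prefactors, one deduces $\|v_C\, q(h_C)-v_C'\, q(h_C')-v_C''\, q(h_C'')\|\le\mathrm{const}$. Summing, $\|\Delta\|$ is bounded uniformly in $f,g$, which is the quasi-cocycle property. For the restriction, take $p_h$ to be the single $H$-edge from $1$ to $h$: if $\widehat d(1,h)>D$ then $S(1,h;D)=\{H\}$ and $\kappa(q)(h)=q(h)$, while otherwise $\kappa(q)(h)=0$ and $h$ lies in the finite set $\{h':\widehat d(1,h')\le D\}$ on which $q$ is bounded. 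Hence $\res_H(\kappa(q))-q$ is bounded.

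The main obstacle is the ``triple bounded-difference'' step: showing that for each coset common to all three separating-coset sets, the three contributions to $\Delta$ cancel up to a uniform remainder. It combines the geometric control of entry/exit points from Lemma~\ref{C} (applied to triangles with isolated bridging $H$-components) with a careful algebraic rearrangement via the quasi-cocycle identity for $q$ — trading bounded $\widehat d$-perturbations of the argument of $q$ for bounded norm perturbations of its value in $V$. This is the principal technical content of the extension construction.
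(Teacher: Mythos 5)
First, a remark on scope: this paper does not prove Theorem \ref{ind1} at all --- it is quoted as a result of \cite{HO} in the survey section --- so there is no in-paper proof to compare against. Your construction is nevertheless recognizably the strategy of \cite{HO}: sum local contributions $v_C\,q(h_C)$ over separating cosets. The definition of $\kappa$, its linearity, the finiteness of the sum via Lemma \ref{dS}, and the computation of $\res_H(\kappa(q))-q$ are all fine. The proof of the quasi-cocycle bound, however, has a genuine gap.

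The combinatorial claim at the heart of your cancellation scheme is false. Lemma \ref{HO} says that each of the three sets $S(1,fg;D)$, $S(1,f;D)$, $S(f,fg;D)$ is contained in the union of the other two up to at most $2$ exceptions; it does \emph{not} follow that they pairwise coincide up to $2$ elements, and they do not. The correct picture (exact already for trees) is a tripod with median $m$: up to boundedly many exceptions, $S(1,f;D)\cap S(1,fg;D)$ consists of the cosets between $1$ and $m$, $S(1,f;D)\cap S(f,fg;D)$ of those between $m$ and $f$, and $S(1,fg;D)\cap S(f,fg;D)$ of those between $m$ and $fg$; these three families are essentially disjoint and each can be arbitrarily large. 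So the case you analyse --- cosets lying in \emph{all three} sets --- is the negligible one (boundedly many, near the median), while the dominant case of unboundedly many cosets lying in exactly \emph{two} of the three sets is not addressed. For $C\in S(1,f;D)\cap S(f,fg;D)$ the two surviving terms $-v'_Cq(h'_C)-v''_Cq(h''_C)$ even carry the \emph{same} sign, and their near-cancellation requires that the two geodesics cross $C$ in opposite directions together with the identity $q(h^{-1})\approx -h^{-1}q(h)$, which for a general quasi-cocycle holds only up to a bounded error.

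This points to the deeper problem: everywhere in your argument the cancellation is only up to a \emph{bounded error per coset}, while the number of contributing cosets is of order $\dyh(1,f)+\dyh(1,g)$ by Lemma \ref{dS}. Summing a uniform per-coset error therefore bounds $\|\Delta\|$ by a quantity growing linearly in these distances, not by a constant, so the quasi-cocycle property does not follow. To make the construction work one must force \emph{exact} cancellation for all but boundedly many cosets --- e.g.\ by first replacing $q$ with a balanced quasi-cocycle satisfying $q(h^{-1})=-h^{-1}q(h)$ identically, and by choosing the entrance/exit data canonically (independently of the chosen geodesics) so that the data attached to a given coset by the two relevant pairs literally agree outside a bounded exceptional set. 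That is the actual technical core of \cite{HO}, and it is missing here. Two further slips: for a coset in all three sets the three entry points are \emph{not} pairwise $\widehat\d$-close (the entry point of the translated geodesic from $f$ sits on the far side of $C$ from $1$), and the right mechanism there is the relation $h_C\approx h'_Ch''_C$ fed into the quasi-cocycle identity --- if $q(h_C),q(h'_C),q(h''_C)$ were merely mutually close, the triple $v_Cq(h_C)-v'_Cq(h'_C)-v''_Cq(h''_C)$ would be $\approx -v''_Cq(h''_C)$, which is unbounded; and the boundedly many exceptional cosets contribute single terms $v_Cq(h_C)$ that are not obviously bounded --- one must check via Lemmas \ref{sep} and \ref{C} that $h_C$ then lies in a fixed $\widehat\d$-ball of $H$.
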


Applying Theorem \ref{ind1} to quasimorphisms and using Bavard duality, Hull and the author also showed that hyperbolically embedded subgroups are undistorted with respect to the stable commutator length \cite{HO}.

The next result can also be derived from Theorem \ref{ind1}, see \cite{HO}. It opens the door for Monod-Shalom rigidity theory for group actions on spaces with measure \cite{MS}. In the case $V=\mathbb R$ it was first proved by Bestvina and Fujiwara in \cite{BF} for groups acting weakly properly discontinuously on hyperbolic spaces. The first proof for $\ell^p$-spaces was given in \cite{Ham} under the assumption of weak acylindricity. Finally, Bestvina, Bromberg, and Fujiwara extended this theorem to uniformly convex Banach $G$-modules in \cite{BBF}.

\begin{thm}[Bestvina--Fujiwara \cite{BF}, Hamenst\"adt \cite{Ham}]
Suppose that a group $G$ is acylindrically hyperbolic. Let $V=\mathbb R$ or $V=\ell^p(G)$ for some $p\in [1, +\infty)$. Then the kernel of the natural map $H^2(G, V) \to H^2_b(G, V)$ is infinite dimensional. In particular, ${\rm dim\,} H^2_b(G, V)=\infty $.
\end{thm}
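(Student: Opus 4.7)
The plan is to identify the kernel of the comparison map $H^2_b(G,V) \to H^2(G,V)$ with a ``reduced'' space of quasi-cocycles, and then exhibit infinitely many linearly independent elements in it. Explicitly, $\ker(H^2_b \to H^2)$ is naturally isomorphic to $QZ^1(G,V) / (Z^1(G,V) + \ell^\infty(G,V))$, via $q \mapsto [\delta q]$, where $\delta q(f,g) = q(fg) - q(f) - fq(g)$ is bounded by the definition of a quasi-cocycle. Hence it suffices to produce an infinite-dimensional subspace of this quotient.

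For $V = \mathbb R$ (trivial action), quasi-cocycles are quasi-morphisms and $Z^1(G,\mathbb R) = \mathrm{Hom}(G,\mathbb R)$. By condition (AH$_3$) of Theorem \ref{main}, $G$ admits an action on a hyperbolic space with a loxodromic WPD element; Lemma \ref{wpd-el} upgrades this to infinitely many independent loxodromic WPD elements. The Bestvina--Fujiwara counting-quasi-morphism construction \cite{BF}, applied to infinitely many suitable pairs of such elements, then produces an infinite-dimensional space of homogeneous quasi-morphisms that are neither bounded nor homomorphisms, yielding the conclusion in this case.

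For $V = \ell^p(G)$, I would use (AH$_4$) together with \cite[Theorem 2.23]{DGO} to fix a non-abelian virtually free hyperbolically embedded subgroup $F \h G$. As an $F$-module, $\ell^p(G)$ decomposes as $\bigoplus_{Fg \in F\backslash G} \ell^p(Fg)$, each summand isomorphic to $\ell^p(F)$; in particular $U := \ell^p(F)$ is an $F$-submodule of $V = \ell^p(G)$, with a bounded $F$-equivariant projection $\pi \colon V \to U$ given by restriction of functions. On the non-elementary hyperbolic group $F$ one has an infinite-dimensional space of quasi-cocycles $q \colon F \to U$ whose classes in $\ker(H^2_b(F,U) \to H^2(F,U))$ are linearly independent; these are built from $\ell^p$-valued Bestvina--Fujiwara counting functions, as in \cite{Ham,BBF}. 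Theorem \ref{ind1} then produces extensions $\kappa(q) \colon G \to V$ satisfying $\| \res_F(\kappa(q)) - q\|_\infty < \infty$.

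The main point is to verify that the extensions remain linearly independent modulo $Z^1(G,V) + \ell^\infty(G,V)$. Suppose $\sum_i c_i \kappa(q_i) \in Z^1(G,V) + \ell^\infty(G,V)$. Restricting to $F$ and postcomposing with $\pi$ — which maps $\res_F Z^1(G,V)$ into $Z^1(F,U)$ and preserves boundedness — the estimate from Theorem \ref{ind1} forces $\sum_i c_i q_i \in Z^1(F,U) + \ell^\infty(F,U)$, so by our choice of the $q_i$ all $c_i$ vanish. The main technical obstacle is the $\ell^p$-valued quasi-cocycle construction on the free group $F$: the subtleties around convergence of the defining counting sums in $\ell^p$ are precisely what necessitates the uniform convexity hypothesis in \cite{BBF}, though in the present setting convergence can be ensured by summing over disjoint translates arranged along an axis.
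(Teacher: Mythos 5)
A preliminary caveat: the paper does not prove this theorem. It appears in the survey appendix with attributions to \cite{BF}, \cite{Ham}, \cite{BBF} and the one-line remark that it ``can also be derived from Theorem \ref{ind1}, see \cite{HO}'', so there is no in-paper argument to compare against. Your proposal is precisely an implementation of that remark: use (AH$_4$) and \cite[Theorem 2.23]{DGO} to fix a non-degenerate virtually free $F\h G$, induce quasi-cocycles from $F$ to $G$ via Theorem \ref{ind1}, and verify injectivity modulo $Z^1+\ell^\infty$ by restricting back to $F$ and projecting onto the submodule $U=\ell^p(F)$. The reduction is correct: the identification of $\ker(H^2_b\to H^2)$ with $\QZ/(Z^1+\ell^\infty)$ is standard (note that the arrow in the paper's statement is reversed, a typo you silently and correctly repair); the restriction map $\pi\colon \ell^p(G)\to\ell^p(F)$ is a norm-one $F$-map because the left $F$-orbits on $G$ are the cosets $Fg$; and $\pi$ carries $\res_F Z^1(G,V)$ into $Z^1(F,U)$ and bounded maps to bounded maps, so the linear-independence argument goes through, provided you state explicitly that the $q_i$ are chosen linearly independent modulo $Z^1(F,U)+\ell^\infty(F,U)$ (that, and not mere independence in $\QZ(F,U)$, is what the restriction argument uses).

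The one load-bearing step you do not supply is the base case: an infinite-dimensional subspace of $\QZ(F,\ell^p(F))/\bigl(Z^1(F,\ell^p(F))+\ell^\infty(F,\ell^p(F))\bigr)$ for a non-elementary virtually free $F$. This is where essentially all of the analysis lives --- it is the content of \cite{Ham}, and of \cite{BBF} for general uniformly convex modules --- and your closing sentence about ``summing over disjoint translates arranged along an axis'' is a gesture, not a proof; the convergence and non-triviality of the $\ell^p$-valued counting cocycles (especially near $p=1$) are exactly the delicate points. Citing \cite{Ham} for the free-group case and deriving the general statement from it via Theorem \ref{ind1} is a legitimate, and indeed the intended, argument; but as written your proposal establishes the reduction, not the theorem. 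A minor further point: Lemma \ref{wpd-el} assumes the whole action is weakly properly discontinuous rather than that a single element is WPD, so for $V=\mathbb R$ it is cleaner to pass to (AH$_2$) and invoke Theorem \ref{class} to get infinitely many independent loxodromic elements before running the Bestvina--Fujiwara construction.
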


The next theorem relates algebraic properties of acylindrically hyperbolic groups to some basic properties of their $C^\ast$-algebras and von Neumann algebras. Recall that a group $G$ is \emph{inner amenable} if there exists a finitely additive conjugacy invariant probability measure on $G\setminus \{ 1\}$. Inner amenability is closely related to the Murray--von Neumann property $\Gamma $ for operator algebras. In particular, if $G$ is not inner amenable, the von Neumann algebra $W^\ast (G)$ of $G$ does not have property $\Gamma $ \cite{Efr}. For further details and motivation we refer to \cite{BeHa}. We recall that every acylindrically hyperbolic group $G$ contains a maximal normal finite subgroup, denoted $K(G)$ \cite[Theorem 2.23]{DGO}.

\begin{thm}[Dahmani--Guirardel--Osin, {\cite{DGO}}]\label{T2-DGO}
For any countable acylindrically hyperbolic group $G$, the following conditions are equivalent.
\begin{enumerate}
\item[(a)] $K(G)=\{ 1\}$.
\item[(b)] $G$ has infinite conjugacy classes (equivalently, the von Neumann algebra $W^\ast (G)$ of $G$ is a $II_1$ factor).
\item[(c)] $G$ is not inner amenable. In particular, $W^\ast (G)$ does not have property $\Gamma $ of Murray and von Neumann.
\item[(d)] The reduced $C^\ast $-algebra of $G$ is simple with unique trace.
\end{enumerate}
\end{thm}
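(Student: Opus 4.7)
The plan is to prove the three non-trivial reverse arrows $(b)\Rightarrow(a)$, $(c)\Rightarrow(a)$, and $(d)\Rightarrow(a)$ as quick consequences of the normality and finiteness of $K(G)$, to handle $(a)\Rightarrow(b)$ by an FC-center analysis using Corollaries \ref{s-norm} and \ref{ar}, and to obtain $(a)\Rightarrow(c)$ and $(a)\Rightarrow(d)$ jointly by verifying the classical Powers property. Concretely, $(b)\Rightarrow(a)$ because every $g\in K(G)$ has its conjugacy class inside the finite set $K(G)$; $(c)\Rightarrow(a)$ because if $K(G)\ne\{1\}$ then the normalized counting measure on $K(G)\setminus\{1\}$ is a conjugation-invariant finitely additive probability measure on $G\setminus\{1\}$; and $(d)\Rightarrow(a)$ because if $N:=K(G)\ne\{1\}$ then $e_N:=|N|^{-1}\sum_{n\in N}n$ is a central projection in $C^*_r(G)$ distinct from $0$ and $1$, so the algebra splits as a direct sum and cannot be simple. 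The parenthetical equivalence in (b) between the icc property and $W^*(G)$ being a $II_1$ factor is the classical Murray--von Neumann characterization.

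For $(a)\Rightarrow(b)$ I would show that the FC-center $F(G):=\{g\in G:|g^G|<\infty\}$ is trivial whenever $K(G)=\{1\}$. Any $g\in F(G)$ has centralizer $C_G(g)$ of finite index in $G$, hence $s$-normal and therefore acylindrically hyperbolic by Corollary \ref{s-norm}; since $g$ lies in its center, Corollary \ref{ar}(a) forces $g$ to have finite order, so $F(G)$ is torsion. Any finitely generated $H\le F(G)$ has center of finite index (the intersection of finitely many finite-index centralizers), so $H$ is virtually abelian and torsion, hence finite, making $F(G)$ locally finite. Since $F(G)$ is characteristic and hence normal, if it were infinite it would be $s$-normal in $G$ and so acylindrically hyperbolic by Corollary \ref{s-norm}, contradicting the fact that every acylindrically hyperbolic group contains a non-abelian free subgroup by the standard ping-pong argument. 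Therefore $F(G)$ is finite, and so $F(G)\subseteq K(G)=\{1\}$, proving that $G$ is icc.

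The main content is the simultaneous proof of $(a)\Rightarrow(c)$ and $(a)\Rightarrow(d)$, which I would obtain by establishing the \emph{Powers property} for $G$ when $K(G)=\{1\}$: for every finite $F\subset G\setminus\{1\}$ and every $n\ge 1$ there is a partition $G=C\sqcup D$ with $fC\cap C=\emptyset$ for every $f\in F$ and elements $g_1,\ldots,g_n\in G$ whose translates $g_iD$ are pairwise disjoint. It is classical (Powers, de la Harpe) that this property implies both (c) and (d). To verify it, I would use the (AH$_4$) form of Theorem \ref{main} together with \cite[Theorem 2.23]{DGO} to extract a non-degenerate virtually free hyperbolically embedded subgroup $E\h G$, pass via Theorem \ref{XtoY} to a subset $Y\subseteq G$ with $E\h (G,Y)$ and $\Gamma(G,Y\sqcup E)$ acylindrical, and then perform ping-pong with two independent loxodromic elements $a,b\in E$ acting with North--South dynamics on $\Gamma(G,Y\sqcup E)$. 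The set $C$ would consist of group elements whose separating-coset sequence (in the sense of Section 4) starts in a large attracting neighborhood of the fixed point $a^{+\infty}$, its $F$-shifts being controlled by almost malnormality of $E$ (Lemma \ref{maln}); the $g_i$ are taken as large, rapidly growing powers $b^{N_i}$, whose disjointness of $g_iD$ follows from the ping-pong lemma applied to $b$.

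The principal obstacle is the uniformity in the finite set $F$: one must arrange the attracting/repelling regions and the exponents so that \emph{every} $f\in F$ simultaneously moves its shift of $C$ off of $C$. A single $f$ is handled by the usual North--South dynamics argument, but to handle all of $F$ at once one needs a quantitative input bounding the distortion of the separating-coset data of Section 4 under conjugation by an arbitrary element, in terms of the acylindricity parameters of $\Gamma(G,Y\sqcup E)$ produced by Theorem \ref{XtoY}. The role of the hypothesis $K(G)=\{1\}$ is precisely to prevent obstructions: if $K(G)\ne\{1\}$ then elements of $F\cap K(G)$ cannot be ping-ponged away at all, which is the same reason the Powers property fails in that case. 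Once this uniform displacement estimate is in place, the construction of $C$, $D$, and the $g_i$ is a routine disjointness computation.
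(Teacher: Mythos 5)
The first thing to say is that the paper does not actually prove Theorem \ref{T2-DGO}: it appears in the survey section and is quoted from \cite{DGO}, so there is no in-paper argument to compare yours against, and your proposal has to be judged on its own. On those terms, about half of it is correct and complete. The three reverse implications are fine: a non-trivial $K(G)$ gives a finite conjugacy class, a conjugation-invariant measure supported on $K(G)\setminus\{1\}$, and a non-trivial central projection $e_N\in C^*_r(G)$, respectively. Your $(a)\Rightarrow(b)$ is also a correct and rather elegant use of the paper's own results: a finite-index centralizer is $s$-normal, hence acylindrically hyperbolic by Corollary \ref{s-norm}, hence has finite centre by Corollary \ref{ar}(a), so the FC-centre $F(G)$ is torsion; the Neumann--Schur argument makes it locally finite; and an infinite $F(G)$ would itself be acylindrically hyperbolic by Corollary \ref{s-norm}, contradicting the existence of free subgroups. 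Thus $F(G)\le K(G)=\{1\}$ and $G$ is icc.

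The genuine gaps are both in the passage from (a) to (c) and (d). First, you defer precisely the non-routine step. The obstacle is not a quantitative ``uniform displacement estimate'': if some $f\in F$ stabilizes the pair $\{a^{\pm\infty}\}$, i.e.\ lies in the maximal elementary subgroup $E(a)$ of your loxodromic element $a$, then no choice of attracting neighbourhood will give $fC\cap C=\emptyset$, however carefully the separating-coset data are controlled. The loxodromic element must therefore be chosen \emph{after} $F$, and the real content is a lemma of the form: if $K(G)=\{1\}$, then for every finite $F\subset G\setminus\{1\}$ there is a generalized loxodromic $a$ with $E(a)\cap F=\emptyset$ (so that ping-pong applies to every $f\in F$ simultaneously). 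That lemma is where the hypothesis $K(G)=\{1\}$ does its work --- your remark only explains why the hypothesis is necessary, not how it is used --- and it is the heart of the argument in \cite{DGO}, which deduces (d) by establishing the Bekka--Cowling--de la Harpe property $P_{nai}$ rather than the Powers partition property. Second, your assertion that the Powers property ``classically'' implies (c) is unjustified and, as stated, I believe false: the Powers condition constrains \emph{left translates} $fC$ of a subset of $G$, whereas inner amenability concerns a mean on $G\setminus\{1\}$ invariant under \emph{conjugation}; these are different actions, and the Powers--de la Harpe machinery yields simplicity and uniqueness of the trace of $C^*_r(G)$ but says nothing about conjugation-invariant means. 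Non-inner-amenability needs its own paradoxical decomposition of the conjugation action on $G\setminus\{1\}$ (again using a loxodromic element whose elementary closure avoids a given finite set), and this is treated separately in \cite{DGO}. So even granting your deferred estimate, condition (c) would remain unproved.
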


The next theorem was proved by Sisto in \cite{Sis} in the context of groups containing weakly contracting elements. It is shown in \cite{Sis} that every loxodromic element of a group $G$ acting acylindrically on a hyperbolic space is weakly contracting and every weakly contracting element is contained in a virtually cyclic hyperbolically embedded subgroup of $G$.  Thus, according to Theorem \ref{lox}, an element of a group $G$ is weakly contracting if and only if it is generalized loxodromic in the sense of our Definition \ref{glox}. In particular, the class of groups which are not virtually cyclic and contain weakly contracting elements coincides with the class of acylindrically hyperbolic group.

Let $G$ be a group generated by a finite symmetric set $X$. The \emph{simple random walk} on $G$ is a Markov chain with the set of states $G$, initial state $1$, and transition probability from $g$ to $h$ equal to $1/|X|$ if $g^{-1}h\in X$ and $0$ otherwise.

\begin{thm}[Sisto, {\cite{Sis}}]
For any finitely generated acylindrically hyperbolic group, the probability that the simple random walk arrives at a generalized loxodromic element in $n$ steps is at least $1- O(\e ^n)$ for some $\e \in (0,1)$.
\end{thm}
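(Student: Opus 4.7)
The plan is to combine a Schottky-style ping-pong construction, which produces a finite ``certificate'' guaranteeing that an element of $G$ is generalized loxodromic, with a large-deviation estimate showing that such a certificate appears in a typical random walk trajectory except with exponentially small probability.

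First I would fix an acylindrical and non-elementary action of $G$ on a hyperbolic space $S$, which exists by Theorem~\ref{main}. By Theorem~\ref{class}, $G$ contains infinitely many independent loxodromic elements for this action; pick two of them, say $a$ and $b$. Passing to sufficiently high powers $a^N, b^N$, the north--south dynamics on $\partial S$ yield a classical Schottky configuration: there are disjoint attracting/repelling neighborhoods in $\partial S$ on which the ping-pong lemma applies. The quantitative content I want is that any element of $G$ whose expression is of the form $u_0 a^{n_1 N} u_1 b^{m_1 N} u_2 \cdots u_{2k}$, with $k$ large and the $u_i$ in a fixed bounded region, acts loxodromically on $S$; the underlying mechanism is a hyperbolic ping-pong estimate in the spirit of Lemma~\ref{prel}. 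Loxodromicity in an acylindrical action is exactly condition (L$_2$) of Theorem~\ref{lox}, so such elements are generalized loxodromic.

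Next I would transfer this criterion to the simple random walk $(W_n)_{n\ge 0}$ on $G$ with respect to its finite symmetric generating set $Y$. Fix a single ``Schottky certificate'' word $w_0$ over $Y$ of some bounded length $\ell_0$, chosen so that it encodes a full ping-pong pattern as above. Since $Y$ is finite, the probability that any prescribed length-$\ell_0$ window of the random walk spells exactly $w_0$ equals a fixed constant $p_0>0$, independent of $n$. Partitioning the first $n$ steps into $\lfloor n/\ell_0\rfloor$ disjoint blocks and using independence of the increments, the probability that \emph{no} block spells $w_0$ is at most $(1-p_0)^{\lfloor n/\ell_0\rfloor}$, which is $O(\varepsilon^n)$ for a suitable $\varepsilon\in(0,1)$.

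The hard part, and the step that requires genuine geometric input, will be making the Schottky certificate robust against the surrounding context in the random word: even if $w_0$ occurs as a subword of the trajectory, one must verify that the full element $W_n$ is loxodromic on $S$, despite the arbitrary prefix and suffix contributed by the walk. This calls for a quantitative strengthening of the ping-pong argument, using hyperbolicity of $S$, the acylindricity estimates of Section~3 (in particular Lemma~\ref{strongacyl} and Lemma~\ref{prel}), and possibly the control of separating cosets from Section~4 to confine the trajectory to a bounded neighborhood of the Schottky axes once the certificate is traversed. Once this robustness is in place, combining it with the large-deviation bound above gives $\Pr(W_n \text{ is generalized loxodromic})\ge 1-O(\varepsilon^n)$ as required.
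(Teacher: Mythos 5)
First, a point of comparison: the paper does not prove this theorem at all. It appears in the survey Section 8 and is quoted from Sisto \cite{Sis}; the only contribution of the present paper is the observation (via Theorem~\ref{lox}) that Sisto's ``weakly contracting'' elements are exactly the generalized loxodromic ones, which justifies the restatement in this language. So your attempt cannot be checked against an argument in the text, only against Sisto's.

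Second, your proposal has a genuine gap, and it sits exactly at the step you flag as ``the hard part''; unfortunately that step is the whole theorem. If the certificate word $w_0$ occurs in some block of the increment sequence, then $W_n=Pw_0Q$, where the prefix $P$ and suffix $Q$ are arbitrary elements of length up to $n$ --- not elements of a ``fixed bounded region'', so your ping-pong criterion does not apply to $W_n$. Worse, no geometric refinement can make it apply: as $Q$ varies, $Pw_0Q$ ranges over all of $G$ (take $Q=w_0^{-1}P^{-1}h$ to realize any prescribed $h$), so the occurrence of $w_0$ as a subword of the spelling carries no information about the group element $W_n$ and cannot certify loxodromicity. The obstruction is algebraic, not metric, so neither Lemma~\ref{prel}, nor Lemma~\ref{strongacyl}, nor the separating-coset machinery of Section 4 can rescue a pattern-matching argument. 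The actual mechanism in \cite{Sis} is different in kind: one fixes a contracting (equivalently, generalized loxodromic) element with its virtually cyclic hyperbolically embedded subgroup $E$, and shows that with probability $1-O(\varepsilon^n)$ the walk makes linear progress and every geodesic from $1$ to $W_n$ crosses some coset of $E$ deeply, with the walk never returning near that coset afterwards; this ``deep unrevisited crossing'' is a property of $[1,W_n]$ rather than of the spelling, and it is what forces $W_n$ itself to be contracting. The probabilistic input there is an exponential bound on return and backtracking probabilities, not a block-counting bound for subword occurrence. Your large-deviation computation is correct as far as it goes, but it estimates the probability of the wrong event.
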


Informally, this theorem says that generic elements of $G$ are generalized loxodromic.

Another direction is explored in \cite{Hull}, where the small cancellation theory in hyperbolic and relatively hyperbolic groups developed in \cite{Ols93,Osi10} was generalized in the context of acylindrically hyperbolic groups. We do not go into details here and only give one exemplary result. By $\pi(G)$ we denote the set of orders of elements of a group $G$.

\begin{thm}[Hull, \cite{Hull}]
Let $G$ be a countable acylindrically hyperbolic group. Then $G$ has an infinite, finitely generated quotient $C$ such that any two elements of $C$ are conjugate if and only if they have the same order. Moreover, and $\pi(C)=\pi(G)$. In particular, if $G$ is torsion free, then $C$ has two conjugacy classes.
\end{thm}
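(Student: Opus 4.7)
The plan is to construct $C$ as a direct limit of acylindrically hyperbolic quotients $G = G_0 \twoheadrightarrow G_1 \twoheadrightarrow G_2 \twoheadrightarrow \cdots$ obtained by a diagonal enumeration argument. Since $G$ is countable, I fix enumerations $(g_n)_{n \in \mathbb{N}}$ of $G$, $(a_n, b_n)_{n \in \mathbb{N}}$ of all pairs of elements having the same order in $G$, and $(e_k)_{k \in \mathbb{N}}$ of a fixed system of representatives, one of each order appearing in $\pi(G)$. I also fix, using Theorem \ref{main} together with \cite[Theorem 2.23]{DGO}, a non-degenerate free hyperbolically embedded subgroup $F = F(x,y) \h G_0$; the pair $\{x,y\}$ will be the generators of $C$. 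The diagonal construction must simultaneously collapse the images of the $g_n$ into the subgroup generated by $x,y$, and make each pair $(a_n, b_n)$ conjugate in the limit, while preserving infiniteness and $\pi$.

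The technical heart of the argument is a one-step small cancellation lemma: if $\Gamma$ is an acylindrically hyperbolic group with a non-degenerate hyperbolically embedded free subgroup $F \h \Gamma$, and $a,b \in \Gamma$ satisfy either both are of infinite order or have the same finite order, then for any prescribed finite set $S \subseteq \Gamma \setminus \{1\}$ there exists a quotient $\Gamma \twoheadrightarrow \Gamma'$ in which (a) the images of $a$ and $b$ are conjugate, (b) every element of $S$ remains non-trivial, (c) $\pi(\Gamma') \subseteq \pi(\Gamma)$, and (d) $\Gamma'$ is again acylindrically hyperbolic with a non-degenerate hyperbolically embedded free subgroup. One proves this by adding a single relator of the form $w = (tat^{-1})b^{-1}$ (suitably modified in the finite-order case), where $t$ is a product of very high powers of $x$ and $y$ chosen so that the cyclic subgroup $\langle w \rangle$ satisfies a strong small cancellation condition with respect to $F$; the Dehn-filling theorem and rotating-families machinery of \cite{DGO} then yield (c) and (d), while (b) follows from the quantitative control on kernels. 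A similar one-step lemma lets me impose a relation equating $g_n$ to some word in $x,y$.

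With the one-step lemma in hand, at stage $n$ I apply it three times: once to conjugate $(a_n, b_n)$, once to push $g_n$ into $\langle x,y \rangle$, and once to ensure $e_k$ survives (where $k$ is chosen via a bookkeeping scheme), always taking $S$ large enough to contain the relevant representatives $e_1, \dots, e_n$ and one loxodromic witness guaranteeing infiniteness. The limit $C$ is then generated by the images of $x$ and $y$; every pair of same-order elements of $C$ lifts to such a pair in some $G_n$ and is conjugated at that stage; $\pi(C) \subseteq \pi(G)$ from (c), while $\pi(G) \subseteq \pi(C)$ because each $e_k$ survives; and $C$ is infinite because a distinguished loxodromic element is kept alive throughout. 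The main obstacle is the one-step lemma, and within it the torsion-control clause (c): one must rule out that the Dehn filling construction introduces spurious torsion, which requires choosing the relator $w$ to be torsion-free within the hyperbolically embedded $F$ and invoking the precise description of torsion in rotating-family quotients from \cite{DGO}. The remainder is a routine diagonal argument.
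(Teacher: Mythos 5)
This theorem appears in the survey section and the paper gives no proof of it; it is quoted from \cite{Hull}. Measured against Hull's actual argument, your high-level architecture is the right one: a countable chain of acylindrically hyperbolic quotients in which same-order pairs are conjugated one at a time, every element of $G$ is pushed into a two-generated subgroup coming from a non-degenerate hyperbolically embedded free subgroup, and a transversal of $\pi(G)$ plus a loxodromic witness are kept alive, with $C$ the direct limit.

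The gap is in the one-step lemma, which you correctly identify as the technical heart but then justify by appeal to ``the Dehn-filling theorem and rotating-families machinery of \cite{DGO}.'' The Dehn filling theorem concerns quotients $G/\ll N\rr$ where $N$ is a normal subgroup of a hyperbolically embedded subgroup avoiding a prescribed finite subset of that subgroup. Your relator $w=(tat^{-1})b^{-1}$ is not an element of the hyperbolically embedded free subgroup $F$ (the elements $a,b$ are arbitrary in $\Gamma$), so that theorem does not apply to $\ll w\rr$. The natural repair --- note that $w$ is generalized loxodromic, place it in a virtually cyclic $E(w)\h\Gamma$ via Theorem \ref{lox}, and Dehn-fill $E(w)$ --- is circular: the finite subset of $E(w)$ that $N=\langle w\rangle$ must avoid depends on $E(w)$, hence on $w$ itself, so it cannot be fixed before $t$ is chosen; and replacing $w$ by a high power to gain injectivity radius destroys the relation $tat^{-1}=b$ you are trying to impose. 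What is actually required is a small cancellation condition stated on the relator word itself (made long, with small overlaps, by the choice of $t$ inside $F$), together with Greendlinger-type lemmas yielding survival of a finite set, the torsion description, and preservation of acylindrical hyperbolicity. That theory over acylindrically hyperbolic groups is precisely the main content of \cite{Hull} (following \cite{Osi10}); it is not a corollary of the results of \cite{DGO} cited in this paper, so your one-step lemma is asserted rather than proved. A second, smaller issue: you enumerate pairs having the same order \emph{in $G$}, but two elements may acquire equal finite orders only in some intermediate quotient $G_n$ (an infinite-order element can become torsion in a quotient without violating $\pi(G_n)\subseteq\pi(G)$); the bookkeeping must range over same-order pairs in each $G_n$, using the torsion description to lift every finite-order element of $C$ to an element of the same order in some $G_n$.
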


Hull also showed in  \cite{Hull} that the quotient group with $2$ conjugacy classes in the above theorem can be constructed so that it admits a non-discrete topology. For motivation we refer to the discussion in the last section of \cite{KOO}. Other applications of the small cancellation theory include results about Kazhdan constants, Frattini subgroups of acylindrically hyperbolic groups, etc; see \cite{Hull} for details.

Yet another application of Dehn filling and small-cancellation-like techniques is given in \cite{AMS}, where Antolin, Minasyan, and Sisto obtained a classification of commensurating endomorphisms of acylindrically hyperbolic groups. Their results are similar to the classification obtained in \cite{MO10} for relatively hyperbolic groups. Recall that an endomorphism $\phi$ of a group $G$ is said to be \emph{commensurating}, if for every $g\in G$, some non-zero power of $\phi(g)$ is conjugate to a non-zero power of $g$. Given an acylindrically hyperbolic group $G$, they show that any commensurating endomorphism of $G$ is inner modulo a small perturbation. We mention just one exciting application of this fact and refer to \cite{AMS} for other applications and more details.

\begin{thm}[Antolin--Minasyan--Sisto, \cite{AMS}]
Let $G$ be the fundamental group of a compact $3$-manifold. Then $Out(G)$ is residually finite.
\end{thm}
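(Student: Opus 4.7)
The plan is to apply \emph{Grossman's criterion}: for a finitely generated, conjugacy separable group $G$ in which every class-preserving automorphism (one fixing each conjugacy class setwise) is inner, the outer automorphism group $Out(G)$ is residually finite. For $G=\pi_1(M)$ with $M$ a compact $3$-manifold, conjugacy separability is known unconditionally by work of Hamilton--Wilton--Zalesskii building on geometrization. Hence the problem reduces to showing that every class-preserving automorphism of $\pi_1(M)$ is inner, and this is precisely where the Antolin--Minasyan--Sisto classification of commensurating endomorphisms enters: if $\varphi$ is class-preserving, then $\varphi(g)$ is conjugate to $g$ for every $g\in G$, so $\varphi$ is in particular a commensurating endomorphism in the sense discussed above, and ``inner modulo a small perturbation'' is the conclusion AMS provide.

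First I would split into cases according to the trichotomy of example (f) above: either (i) $\pi_1(M)$ is virtually polycyclic, or (ii) $\pi_1(M)$ is acylindrically hyperbolic, or (iii) $\pi_1(M)$ fits into an extension $1\to N\to \pi_1(M)\to Q\to 1$ with $N\cong\mathbb Z$ normal and $Q$ acylindrically hyperbolic. In case (i), $Out(\pi_1(M))$ is known to be an arithmetic group by classical results of Auslander and Wehrfritz, and arithmetic groups are residually finite; alternatively Grossman's criterion can be verified directly using the Mal'cev conjugacy separability of polycyclic-by-finite groups and the fact that class-preserving automorphisms of such groups are inner. In case (ii), the AMS theorem applies directly: any commensurating endomorphism of an acylindrically hyperbolic group is inner modulo a bounded perturbation supported on the maximal finite normal subgroup $K(G)$, and the class-preserving hypothesis pins down this perturbation to be trivial, forcing $\varphi$ to be genuinely inner.

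The main obstacle is case (iii), corresponding geometrically to Seifert-fibered pieces. Here any class-preserving $\varphi$ must preserve each conjugacy class and hence each normal subgroup setwise; in particular it preserves $N$. Since $N\cong \mathbb Z$, the restriction $\varphi|_N$ is $\pm \mathrm{id}$, and the class-preserving condition combined with the (generic) centrality of $N$ in the Seifert case forces it to be $\mathrm{id}$. Thus $\varphi$ descends to a class-preserving automorphism $\overline\varphi$ of the acylindrically hyperbolic quotient $Q$, to which case (ii) applies to give $\overline\varphi$ inner. The remaining task is to lift this conclusion back to $\pi_1(M)$, controlling the $\mathrm{Hom}(Q,N)$-ambiguity of the lift via the cohomology of the extension and ruling out non-inner lifts using the class-preserving hypothesis at the level of $\pi_1(M)$. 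This last step is the hard part, because it requires matching the abelian cocycle data of the extension with the constraint that $\varphi(g)$ is conjugate (not merely modulo $N$ conjugate) to $g$. Once all three cases are handled, Grossman's criterion delivers residual finiteness of $Out(\pi_1(M))$.
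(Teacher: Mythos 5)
A preliminary remark on the comparison itself: this theorem sits in the survey section and is quoted from the Antolin--Minasyan--Sisto preprint \cite{AMS}; the paper contains no proof of it, only the remark that it is an application of the classification of commensurating endomorphisms of acylindrically hyperbolic groups. So your proposal can only be measured against the strategy the paper alludes to, and in outline you have reconstructed exactly that strategy: Grossman's criterion (finitely generated, conjugacy separable, and every class-preserving automorphism inner implies $Out(G)$ residually finite), conjugacy separability of compact $3$-manifold groups after Hamilton--Wilton--Zalesskii, and the commensurating-endomorphism machinery to show that class-preserving automorphisms are inner. The case division via the trichotomy of example (f) is the natural one.

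Nevertheless the proposal has two genuine gaps. The more serious one you concede yourself: in case (iii) you reduce to a class-preserving automorphism of the acylindrically hyperbolic quotient $Q=\pi_1(M)/N$ and then declare that lifting innerness back through the central extension, while controlling the $\mathrm{Hom}(Q,N)$-ambiguity of the lift, is ``the hard part.'' That step is not carried out, and it is precisely where the content lies: one must show that a lift of an inner automorphism of $Q$ which preserves every $\pi_1(M)$-conjugacy class differs from an inner automorphism by a homomorphism $Q\to N$ that is forced to vanish, and this requires input from the Seifert-fibered structure rather than generic cohomological bookkeeping. The second gap is in case (ii): the classification of commensurating endomorphisms only yields ``inner modulo a small perturbation,'' and the clean implication from class-preserving to inner is available when the maximal finite normal subgroup $K(G)$ is trivial (or after an additional argument eliminating the perturbation). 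Your sentence ``the class-preserving hypothesis pins down this perturbation to be trivial'' asserts the required conclusion rather than deriving it; one must either check that $K(\pi_1(M))$ is trivial in the relevant cases or supply that extra step. Neither gap shows the plan is wrong, but both must be filled before this is a proof rather than a roadmap.
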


Finally we want to mention a result of Sisto \cite{Sis13}, stating that generalized loxodromic elements of finitely generated acylindrically hyperbolic groups are Morse. We do not discuss the definition of a Morse element here and only mention one corollary, which is a combination of Sisto's result and a theorem of Drutu and Sapir, stating that existence of Morse elements implies existence of cut points in all asymptotic cones.

\begin{thm}[Sisto \cite{Sis13}]
Every finitely generated acylindrically hyperbolic group has cut points in all asymptotic cones.
\end{thm}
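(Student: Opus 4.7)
The plan is to combine two ingredients that are explicitly named in the discussion preceding the theorem: Sisto's fact that every generalized loxodromic element of a finitely generated acylindrically hyperbolic group is a Morse element, together with the Drutu--Sapir criterion that a finitely generated group possessing a Morse element has cut points in each of its asymptotic cones. Given these two inputs, the argument is formal, so I would first produce a generalized loxodromic element of $G$ and then invoke the two inputs in sequence.

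First I would fix a finite generating set $S$ of $G$. By condition (AH$_2$) of Theorem~\ref{main}, the acylindrically hyperbolic group $G$ admits a non-elementary acylindrical action on a hyperbolic space; by Theorem~\ref{class} this action contains infinitely many independent loxodromic elements, and each such element is generalized loxodromic in the sense of Definition~\ref{glox} by Theorem~\ref{lox}. Pick any such element $g\in G$. Applying condition (L$_1$) of Theorem~\ref{lox} and Lemma~\ref{acyl} to guarantee $S\subseteq X$ after enlarging, one obtains a (possibly infinite) generating set $X\supseteq S$ of $G$ such that $\Gamma(G,X)$ is hyperbolic and acylindrical and $g$ is $X$-loxodromic; the orbit $\{g^n\}_{n\in\mathbb Z}$ is then a bi-infinite quasi-geodesic in the hyperbolic graph $\Gamma(G,X)$, hence a Morse quasi-geodesic there.

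The crucial step is to upgrade loxodromicity of $g$ with respect to $d_X$ to the Morse property with respect to the finite word metric $d_S$; this is Sisto's theorem, and it is the main obstacle. The approach I would take follows Sisto's strategy of proving that $\{g^n\}$ is a \emph{contracting} subset of $\Gamma(G,S)$, i.e.\ that there is a coarse closest-point projection $\pi\colon G\to\{g^n\}$ whose image of any $S$-ball disjoint from $\{g^n\}$ has uniformly bounded $S$-diameter; a contracting quasi-geodesic is automatically Morse. To establish contraction in $d_S$ one would combine acylindricity of the $X$-action (via Lemma~\ref{=R}) with the separating-coset technology of Section~4: a long $S$-geodesic whose endpoints project to very distant points on $\{g^n\}$ is forced to essentially penetrate many cosets lying near the $\langle g\rangle$-axis, and acylindricity of the $X$-action bounds the number of group elements producing such configurations, which one then converts into an $S$-displacement bound using that $S$ is finite and that virtually cyclic hyperbolically embedded subgroups carry a locally finite relative metric equivalent to $d_S|_E$ by Lemma~\ref{411}.

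The principal technical obstacle is precisely this metric-conversion step: acylindricity is naturally formulated in $d_X$, while the Morse condition must be certified in the finer metric $d_S$, and a priori an $X$-short excursion off the $X$-quasi-axis could conceal an unboundedly long $S$-detour. Once the Morse property is established in $\Gamma(G,S)$, the Drutu--Sapir theorem immediately yields cut points in every asymptotic cone of $G$, completing the proof.
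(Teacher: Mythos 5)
Your top-level argument is exactly the paper's: the theorem is stated in the survey appendix with no proof beyond observing that it is the combination of Sisto's result that generalized loxodromic elements (which exist in any acylindrically hyperbolic group by Theorems \ref{main} and \ref{lox}) are Morse with the Drutu--Sapir theorem that Morse elements force cut points in all asymptotic cones. Your additional sketch of Sisto's Morse theorem is not completed (you concede the $d_X$-to-$d_S$ conversion remains open), but since the paper itself treats that ingredient as a black box cited from \cite{Sis13}, your proposal matches the paper's approach and is correct modulo that citation.
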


Potentially this result can be useful in the study of various questions about geometric rigidity of acylindrically hyperbolic groups. For definitions and results about groups with cut points in asymptotic cones see \cite{DS,OOS} and references therein.

\vspace{1cm}

\noindent \textbf{Denis Osin: } Department of Mathematics, Vanderbilt University, Nashville 37240, U.S.A.\\
E-mail: \emph{denis.v.osin@vanderbilt.edu}

\end{document}